\definecolor{rltred}{rgb}{0.75,0,0}
\definecolor{rltgreen}{rgb}{0,0.5,0}
\definecolor{rltblue}{rgb}{0,0,0.75}
\newtheorem{Def}[equation]{Definition}
\newtheorem{Sa}[equation]{Theorem}
\newtheorem{Lem}[equation]{Lemma}
\newtheorem{Bem}[equation]{Remark}
\newtheorem{Kor}[equation]{Corollary}
\newtheorem{Vss}[equation]{Assumption}
\newenvironment{Bew}{\begin{proof}[Proof]}{\end{proof}}
\newcommand{\R}{\mathbb{R}}
\newcommand{\E}{\mathbf{E}}
\newcommand{\D}{\mathbf{D}}
\newcommand{\Rr}{\mathbf{R}}
\newcommand{\w}{\boldsymbol\omega}
\def\dint{\fint}
\DeclareMathOperator{\Div}{\operatorname{div}}
\newcommand{\Ss}{\mathbf{S}\big(\mathbf{D}\mathbf{v},\mathbf{R}(\mathbf{v},\boldsymbol\omega),\mathbf{E}\big)}
\newcommand{\Sn}{\mathbf{S}\big(\mathbf{D}\mathbf{v}^n,\mathbf{R}(\mathbf{v}^n,\boldsymbol\omega^n),\mathbf{E}\big)}
\newcommand{\N}{\mathbf{N}(\nabla\boldsymbol\omega,\mathbf{E})}
\newcommand{\Nn}{\mathbf{N}(\nabla\boldsymbol\omega^n,\mathbf{E})}
\newcommand{\anti}{{\ensuremath{\mathrm{skew}}}}
\newcommand{\bell}{\boldsymbol{\ell}}
\newcommand{\Lp}{L^p(\Omega)}
\newcommand{\Lq}{L^q(\Omega)}
\newcommand{\Lr}{L^r(\Omega)}
\newcommand{\Wpo}{W^{1,p}_0(\Omega)}
\newcommand{\WpEo}{H^{1,p}_0(\Omega;\vert \mathbf{E}\vert^2)}
\newcommand{\Wpwo}{H^{1,p}_0(\Omega;\sigma)}
\newcommand{\Wpw}{W^{1,p}(\Omega;\sigma)}
\newcommand{\Hpw}{H^{1,p}(\Omega;\sigma)}
\newcommand{\Lpw}{L^{p}(\Omega;\sigma)}
\newcommand{\Vp}{V_p(\Omega)}
\newcommand{\Wpwx}{W^{1,p(\cdot)}(\Omega;\sigma)}
\newcommand{\Hpwx}{H^{1,p(\cdot)}(\Omega;\sigma)}
\newcommand{\Lpwx}{L^{p(\cdot)}(\Omega;\sigma)}
\newcommand{\Io}{\int_{\Omega}}
\numberwithin{equation}{section}
\begin{document}
\begin{frontmatter}

\title{Existence of steady solutions for a general model for micropolar electrorheological fluid flows}

\author[ak]{Alex Kaltenbach}
\ead{alex.kaltenbach@mathematik.uni-freiburg.de}

\author[mr]{\corref{cor1}Michael R\r u\v zi\v cka}
\ead{rose@mathematik.uni-freiburg.de}

\cortext[cor1]{Corresponding author}

\address[mr]{Institute of Applied Mathematics,
  Albert--Ludwigs--University Freiburg, Ernst--Zermelo--Str.~1, D-79104 Freiburg,
  GERMANY.}  

\address[ak]{Institute of Applied Mathematics,
	Albert--Ludwigs--University Freiburg, Ernst--Zermelo--Str.~1, D-79104 Freiburg,
	GERMANY.}

\begin{abstract}
  In this paper we study the existence of solutions to a steady
  system that describes the motion of a micropolar electrorheological
  fluid. The constitutive relations for the stress tensors belong to
  the class of generalized Newtonian fluids.  The analysis of this
  particular problem leads naturally to weighted~Sobolev~spaces.  By
  deploying the Lipschitz truncation technique, we establish the
  existence~of solutions without additional assumptions on the
  electric field.
\end{abstract}

\begin{keyword}
Existence of solutions, Lipschitz truncation, weighted function spaces, variable exponent spaces, micropolar
  electrorheological fluids.

  \MSC 35Q35  
  \sep 35J92  
  \sep
  46E35  
\end{keyword}

\end{frontmatter}

\enlargethispage{3mm}
\vspace*{-3mm}
\section{Introduction}\label{introduction}
In this paper we establish the existence of solutions of the
system\footnote{We denote by $\bfvarepsilon$ the isotropic third order
	tensor and by $\bfvarepsilon:\mathbf{S}$ the vector with the
	components $\vep_{ijk}S_{jk}$, $i=1,\ldots,d$, where the summation
	convention over repeated indices is used.}\\[-5mm]
\begin{align}
	\begin{aligned}\label{NS}
		-\Div \mathbf{S}+\Div(\mathbf{v}\otimes\mathbf{v})
		+\nabla\pi&=\mathbf{f} &&\text{in} \ \Omega\,,
		\\
		\Div \mathbf{v}&=0 &&\text{in} \ \Omega\,,
		\\
		-\Div\mathbf{N}+\Div(\boldsymbol{\omega}\otimes
		\mathbf{v})&=\boldsymbol{\ell}-\bfvarepsilon:\mathbf{S}
		&\quad&\text{in} \ \Omega\,,\\
		\mathbf{v}=\bfzero\,, \quad \boldsymbol\omega &=\bfzero
		&&\text{on}\ \partial\Omega \,. \\[-2mm]
	\end{aligned}
\end{align}
Here, $\Omega\subseteq \setR^d$, $d\ge 2$, is a bounded domain. The three
equations in \eqref{NS} represent the balance of momentum, mass and angular
momentum for an incompressible, micropolar electrorheological
fluid.  In it, $\bv$ denotes the velocity, $\w$ the
micro-rotation, $\pi$ the pressure, $\bS$ the mechanical extra
stress tensor, $\bN$ the couple stress tensor, $\bell$ the
electromagnetic couple force, ${\ff=\tilde \ff + \chi^E\divo (\bE
	\otimes \bE)}$ the body force, where $\tilde \ff$ is the mechanical
body force, $\chi^E$ the dielectric susceptibility and $\bE$ the
electric field. The electric field $\bE$ solves the quasi-static
Maxwell's equations \\[-4mm]
\begin{align}
	\begin{aligned}\label{maxwell}
		&& \Div \mathbf{E}&=0 &&\text{in}\ \Omega\,,
		\\
		&& \curl \mathbf{E}&=\bfzero &&\text{in}\ \Omega\,,
		\\
		&& \mathbf{E}\cdot \mathbf{n}&=\mathbf{E}_0\cdot \mathbf{n}
		&\quad&\text{on}\ \partial\Omega\,, \\[-5mm]
	\end{aligned}
\end{align}
where $\mathbf{n}$ is the outer normal vector field of
$\partial \Omega$ and $\mathbf{E}_0$ is a given~electric~field. The
system \eqref{NS}, \eqref{maxwell} is the steady version of a model
derived in \cite{win-r}, which generalizes previous models of
electrorheological fluids in \cite{RR2},
\cite{rubo}.~The~model~in~\cite{win-r} contains a more realistic
description of the dependence of the electrorheological effect on the
direction of the electric field. Since Maxwell's equations
\eqref{maxwell} are separated from the balance laws \eqref{NS} and due
to the well developed \mbox{mathematical} theory for Maxwell's equations (cf.~Section~\ref{sec:E}), we can
view the electric~field~$\bE$ with appropriate properties as a given
quantity in \eqref{NS}. As a consequence,~we concentrate in this paper on the investigation of the mechanical properties~of~the
electrorheological fluid governed by \eqref{NS}.

A representative example for a constitutive
relation for the stress tensors in \eqref{NS} reads, e.g., (cf.~\cite{win-r},
\cite{rubo})
\begin{align}
	\hspace*{-1mm}  
	\begin{aligned}\label{eq:SN-ex}
		\mathbf{S}&=(\alpha_{31}+\alpha_{33}\vert\mathbf{E}\vert^2)
		(1+\vert\mathbf{D}\vert)^{p-2}\mathbf{D}+ \alpha_{51}
		(1+\vert\mathbf{D}\vert)^{p-2}\big (\mathbf{D} \bE \otimes \bE +
		\bE \otimes \bD\bE \big)\hspace*{-5mm}
		\\
		&\quad +
		\alpha_{71}\vert\mathbf{E}\vert^2(1+\vert\mathbf{R}\vert)^{p-2}
		\mathbf{R} + \alpha_{91} (1+\vert\mathbf{R}\vert)^{p-2}\big
		(\mathbf{R} \bE \otimes \bE + \bE \otimes \bR\bE \big)\,,
		\\
		\mathbf{N}&=(\beta_{31}+\beta_{33}\vert\mathbf{E}\vert^2)
		(1+\vert\nabla\boldsymbol\omega\vert)^{p-2}\nabla\boldsymbol\omega
		\\
		&\quad + \beta_{51}(1+\vert\nabla \w\vert)^{p-2}\big ((\nabla \w
		)\bE \otimes \bE + \bE \otimes (\nabla \w)\bE \big)\,,
	\end{aligned}
\end{align}
with material constants $\alpha_{31},\alpha_{33},\alpha_{71},\beta_{33}>
0$ and $\beta_{31}\ge 0$ and a shear exponent $p=\hat p \circ
\abs{\bE}^2$, where $\hat p$ is a material function. 
In \eqref{eq:SN-ex}, we employed the common notation\footnote{Here,
  $\bfepsilon:\bv$ denotes the tensor with components $\vep_{ijk}v_k$,
  $i,j=1,\ldots,d$.} $\bD = (\nabla \bv)^\sym$ and
${\bR=\bR(\bv,\w):= (\nabla \bv)^\anti +\bfvarepsilon :\w }$. 

Micropolar fluids have been introduced by Eringen in the sixties
(cf.~\cite{eringen-book}).~A model for electrorheological fluids was
proposed in \cite{RR1},~\cite{RR2},~\cite{rubo}.~While~there~exist
many investigations of micropolar fluids or
electrorheological~fluids (cf.~\cite{Lukaszewicz},~\cite{rubo}), there
exist to our knowledge no mathematical investigations of
steady~motions~of micropolar electrorheological fluids except the PhD
thesis \cite{frank-phd},~the~diploma~thesis \cite{weber-dipl} and the
research paper \cite{erw}. Even these investigations only treat the
case of constant shear exponents.

For the existence theory of problems of similar type as \eqref{NS},
the Lipschitz truncation technique (cf.~\cite{fms2}, \cite{dms}) has
proven to be very~powerful.~This~method is available in the setting of
Sobolev spaces (cf.~\cite{fms}, \cite{dms}, \cite{john}), variable
exponent Sobolev spaces (cf.~\cite{dms}, \cite{john}), solenoidal
Sobolev spaces (cf.~\cite{bdf}), Sobolev spaces with 
Muckenhoupt~weights~(cf.~\cite{erw}) and functions~of~bounded~\mbox{variation} (cf.~\cite{BDG19}).  Since, in general,
$\vert \bE\vert^2$ does not belong to~the~correct Muckenhoupt class $\mathcal A_p$,
the results in \cite[Thm.~5.49, Thm.~5.56, Thm.~5.59 \&
Thm.~6.44]{erw} are either sub-optimal with respect to the lower
bound for the shear~exponent~$p$ or require additional restrictive assumptions on
the electric field $\bE$.~Apart~from~that, solely the case of constant shear
exponents is treated. As a consequence, there are no results for the general model
for micropolar electrorheological fluids \eqref{NS}--\eqref{eq:SN-ex},
which is the most realistic from the point of view of modeling and
applications. The present paper improves the previous treatments in
two~special~aspects.~First, we show the existence of solutions for constant
shear exponents $p$ larger than the optimal exponent $\frac {2d}{d+2}$
without the restrictive assumption that  $\vert \bE\vert^2$
belongs to the Muckenhoupt class $\mathcal A_p$. Second, we extend
this result to the general case of shear exponents
$p=\hat p \circ \abs{\bE}^2$ satisfying $p^->\frac {2d}{d+2}$. In
fact, this seems to be the first existence result in weighted variable
exponent Sobolev spaces with a weight not satisfying a Muckenhoupt condition. 

\smallskip \textit{This paper is organized as follows:} First, we
introduce the functional~setting in the constant exponent case,
collect auxiliary results and give assumptions for the stress
tensors. Section~\ref{sec:E} is devoted to the analysis of the
electric field and weighted Sobolev spaces, while Section
\ref{sec:stab} is devoted to the weak stability of the stress
tensors. In Section \ref{veroeffentlichung3}, we deploy the Lipschitz
truncation technique in order to prove the existence of solutions of
\eqref{NS}, \eqref{maxwell} for constant~shear~exponents.~Section~\ref{veroeffentlichung1p(x)} contains the generalization of the
previous~results~to~the~variable~exponent~case. 

\section{Preliminaries}\label{veroeffentlichung1}

\subsection{Notation and function spaces}

We employ the customary Lebesgue spaces $\Lp$, $1\leq p\leq\infty$, and
Sobolev spaces $W^{1,p}(\Omega)$, $1\leq p\leq\infty$, where
$\Omega\subseteq \R^d$, $d\in \mathbb{N}$, is a bounded domain. We
denote by $\Vert\cdot\Vert_p$ the norm in $\Lp$ and by
$\Vert\cdot\Vert_{1,p}$ the norm~in~$W^{1,p}(\Omega)$.~Moreover, the
spaces $C_0^k(\Omega)$, $k \in \setN_0\cup \set{\infty}$, consist of
$k$--times continuously differentiable functions with compact support
in $\Omega$. The space $W^{1,p}_0(\Omega)$, $1\le p <\infty$, is
defined as the completion of $C_0^\infty(\Omega)$ with respect to the
gradient norm $\Vert\nabla \cdot\Vert_{p}$, while the space
$V_p(\Omega)$, $1\le p <\infty$, is the closure of
$C_{0,\textup{div}}^\infty(\Omega):=\{\bu\in C_0^\infty(\Omega)^d\fdg
\Div \bu=0\}$ with respect to the gradient norm
$\Vert\nabla
\cdot\Vert_{p}$.~For~a~bounded~Lipschitz~domain~${G\subseteq \R^d} $,
we define $W^{1,\infty}_0(G)$ as the subspace of functions
$u \in W^{1,\infty}(G)$ having a vanishing trace, i.e.,
$u|_{\partial G}=0$. We use small boldface letters, e.g.,~$\bv$, to
denote vector-valued functions and capital boldface letters,
e.g.,~$\bS$, to denote~\mbox{tensor-valued}~functions\footnote{The
  only exception of this is the electric vector field which is denoted
  as usual by $\bE$.}. However, we do not distinguish between scalar,
vector-valued and tensor-valued function spaces in the notation. The
standard scalar product between vectors is denoted by $\bv\cdot \bu$,
while the standard scalar product between tensors is
denoted~by~$\bA:\bB$. For a normed linear vector space $X$, we denote
its topological dual space by $X^*$.  Moreover, we employ the notation
$\langle u,v\rangle:=\Io uv\,dx$, whenever the right-hand side is
well-defined.  We denote by $\vert M\vert$ the $d$--dimensional
Lebesgue measure of a measurable set $M$. The mean value of a locally
integrable function $u\in L^1_{\loc}(\Omega)$ over a measurable set
$M\subseteq\Omega$ is denoted by
$\dint_M u\,dx:=\frac{1}{\vert M\vert} \int_M u\,dx$. By
$L^p_0(\Omega)$ and $C^\infty_{0,0}(\Omega)$, resp., we denote the
subspace of $L^p(\Omega)$ and $C^\infty_{0}(\Omega)$, resp.,
consisting of all functions $u$ with
vanishing~mean~value,~i.e.,~${\dint_\Omega u\,dx =0}$.

We will also use weighted Lebesgue and Sobolev spaces
(cf.~\cite{heinonen}, \cite{kufner_opic}, \cite{kfj}). A weight $\sigma$ on $\R^d$
is a locally integrable function satisfying $0<\sigma(x)<\infty$
a.e.\footnote{If not stated otherwise, a.e.~is meant with respect to
  the Lebesgue measure.}. To each weight $\sigma$ we associate a Radon
measure $\nu_\sigma$ defined via $\nu_\sigma (A):=\int _A \sigma\, dx$. The
space $\Lpw$, $p \in [1,\infty)$, is defined as the set of all Lebesgue
measurable functions $u:\Omega\to \mathbb{R}$ for which $\Io\vert
u\vert^p\sigma\,dx<\infty$. It is a Banach space if equipped with the
norm $\Vert u\Vert_{p,\sigma}\hspace*{-0.1em}:=\hspace*{-0.1em} \big(\hspace*{-0.1em}\Io\hspace*{-0.1em} \vert u\vert^p
\sigma\, dx\big)^{\smash{\frac{1}{p}}}\!$. \!For $p\hspace*{-0.12em} \in\hspace*{-0.12em} (1,\hspace*{-0.1em}\infty)$,~it~is~separable~and~reflexive. 
Note that, in general, the space $\Lpw$ does not embed into
$L^1_\loc(\Omega)$ (cf.~\cite{kufner_opic}). The condition $\smash{\sigma ^{\frac
  {-1}{p-1}} }\in L_\loc ^1(\Omega)$ is both necessary and sufficient for
the  embedding $\Lpw \vnor L^1_\loc(\Omega)$ (cf.~\cite{kufner_opic}, \cite{frank-phd}). 
The dual space of $\Lpw$ can be
identified with respect to $\skp{\cdot}{\cdot}$~with~$
\smash{L^{p'}(\Omega;\sigma')}$, where $\smash{\sigma':=\sigma^{\smash{\frac{-1}{p-1}}}}$. In
particular,~we~have~that
\begin{equation*}
  \abs{\skp{u}{v}}\le \norm{u}_{p,\sigma} \norm{v}_{p',\sigma'}\,,
\end{equation*}
if $\smash{u\hspace*{-0.1em} \in\hspace*{-0.1em} L^{p}(\Omega;\sigma)}$ and  $\smash{v \hspace*{-0.1em}\in\hspace*{-0.1em} L^{p'}(\Omega;\sigma')}$. \hspace*{-0.05em}By
$L^p_0(\Omega;\sigma)$, we denote the subspace~of~$L^p(\Omega;\sigma)$
consisting of all functions with vanishing mean value.

In order to define weighted Sobolev spaces, we make the following
assumption on the weight $\sigma$.
\begin{Vss}\label{weight}
  Let $\Omega\subseteq\R^d$, $d\in \mathbb{N}$, be an open set and
  $p\in \left[1,\infty\right)$.  The weight $\sigma$ is admissible,
  i.e., if a sequence ${(\varphi_n)_{n\in \mathbb{N}}\subseteq C^\infty(\Omega)}$ and
  ${\bv \in L^p(\Omega;\sigma)}$ satisfy
  ${\int_\Omega\vert\varphi_n\vert^{p}\sigma\,dx\to  0}$
  $(n\to \infty)$ and
  ${\int_\Omega\vert\nabla\varphi_n-\bv\vert^{p}\sigma\,dx\to
     0}$ $(n\to\infty)$, then it follows that $\bv=\mathbf{0} $ in
  $L^p(\Omega;\sigma)$.
\end{Vss}
\begin{Bem}\label{weightexamples}
  \begin{enumerate}
  \item[{\rm (i)}] If $\sigma$ belongs to the Muckenhoupt class
    $ \mathcal A_p$ for some ${p\hspace*{-0.15em}\in\hspace*{-0.15em} \left[1,\infty\right)}$, then
    Assumption~\ref{weight}~is~satisfied for this specific $p$
    (cf. \cite[Sec.~1.9]{heinonen}).
		
  \item[{\rm (ii)}] If $\sigma \in C^0(\Omega)$, then Assumption
    \ref{weight} is satisfied for all
    $p\in \left[1,\infty\right)$.~In~fact, the set
    $\Omega_{\sigma}:=\{\sigma>0\}$ is open and satisfies
    $\vert \Omega\setminus\Omega_{\sigma}\vert=0$. In addition, for
    any $K\subset\subset \Omega_{\sigma}$, there exists a constant
    $c_K>0$ such that~${c_K^{-1}\leq \sigma\leq c_K}$~in~$K$. Thus,
    for a sequence
    $(\varphi_n)_{n\in \mathbb{N}}\subseteq C^\infty(\Omega)$ from
     $\int_\Omega\vert\varphi_n\vert^p\,\sigma\,dx\to {0}$ $(n\to\infty)$ and
    ${\int_\Omega\vert\nabla\varphi_n-\bv\vert^p\,\sigma\,dx\to 0}$
    $(n\to\infty)$, where $\bv \in L^p(\Omega;\sigma)$, it follows
    that $\varphi_n\hspace*{-0.12em}\to\hspace*{-0.12em} 0$ in $L^p(K)$ $(n\hspace*{-0.12em}\to \hspace*{-0.12em}\infty)$ and
    $\nabla\varphi_n\hspace*{-0.12em}\to\hspace*{-0.12em} \bv$~in~$L^p(K)$~${(n\hspace*{-0.12em}\to\hspace*{-0.12em} \infty)}$~for~all~${K\!\!\subset\subset\! \Omega_{\sigma}}$. Consequently, for every
    $\boldsymbol\psi\in C_0^\infty(\Omega_{\sigma})$, one has that
    \begin{align*}
      0=\lim_{n\to \infty}{-\int_{\Omega}{\varphi_n\textup{div}\,
      \boldsymbol\psi\,dx}}=\lim_{n\to
      \infty}{\int_{\Omega}{\nabla\varphi_n\cdot
      \boldsymbol\psi\,dx}}=\int_{\Omega}{\bfv\cdot
      \boldsymbol\psi\,dx}\,, 
    \end{align*}
    i.e., $\bfv=0$ a.e.~in $\Omega_{\sigma}$, which, in turn, implies
    that $\bfv=0$ a.e.~in $\Omega$.
    \item[\rm (iii)] There exist weights $\sigma$ such that Assumption
      \ref{weight} is not satisfied (cf.~\cite{FKS82}). 
  \end{enumerate}
\end{Bem}

For $\sigma$ satisfying Assumption~\ref{weight}, 
and $p \in [1,\infty)$, we introduce the norm
$$
  \Vert u\Vert_{1,p,\sigma}:=
  	\|u\|_{p,\sigma}
  +\|\nabla u\|_{p,\sigma}\,,
$$ 
whenever the right-hand side is well-defined.  Then, the Sobolev space
$\Hpw$ is defined to be the completion of
$$
\mathcal{V}_{p,\sigma}:=\big\{u\in C^\infty(\Omega)\fdg
\Vert u\Vert_{1,p,\sigma}<\infty\big\}
$$
with respect to the norm $\Vert\cdot\Vert_{1,p,\sigma}$. In other
words, $u \in \Hpw$ if and only if $u\in \Lpw$ and there exists a function
$\bv\in\Lpw$ such that for some sequence
$(\varphi_n)_{n\in \mathbb{N}}\hspace*{-0.1em}\subseteq\hspace*{-0.1em} C^\infty(\Omega)$ holds
$\Io\vert\varphi_n-u\vert^p\sigma\,dx\hspace*{-0.1em}\to\hspace*{-0.1em} 0$ $(n\hspace*{-0.1em}\to\hspace*{-0.1em}\infty)$~and~${\Io\vert\nabla\varphi_n-\bv\vert^p\sigma\,dx\hspace*{-0.1em}\to\hspace*{-0.1em} 0}$
${(n\to\infty)}$. In~this~case, the function $\bv$ is called the
gradient of $u$ in $\Hpw$ and denoted by $\hat{\nabla}u:=\bv$. Here,
Assumption \ref{weight} implies that $\hat{\nabla}u $ is a uniquely
defined function in $\Lpw$. Note that $W^{1,p}(\Omega)=\Hpw$ if
$\sigma= 1$ a.e.~in $\Omega$ with
${\nabla u =\hat\nabla u}$~for~all~${u\in W^{1,p}(\Omega)}$. However,
in general, $\hat{\nabla}u$ and the usual weak or distributional gradient
$\nabla u$ do not coincide.~The~space~$\Hpw$,~${p \in (1,\infty)}$,
is a separable and reflexive Banach space. Then, we define the space $\Wpwo$~as~the completion of $C_0^\infty(\Omega)$ with respect to
$\Vert\cdot\Vert_{1,p,\sigma}$.  We will use the 
observation that, if $\sigma\in L^\infty(\Omega)$, then
$W^{1,p}_0(\Omega)\hookrightarrow H^{1,p}_0(\Omega;\sigma)$~and
${\nabla u =\hat\nabla u}$ for every ${u\in W^{1,p}_0(\Omega)}$ (cf.~\cite[Lem.~1.12]{heinonen}), which is a 
consequence of the
inequality
$\smash{\|u\|_{p,\sigma}\leq \|\sigma\|_{\infty}^{1/p}\|u\|_{p}}$ valid for  every
$u\in L^p(\Omega)$ and the density of
$C^\infty(\Omega)\cap W^{1,p}_0(\Omega)$~in~$W^{1,p}_0(\Omega)$.

Another possible approach is to define the weighted Sobolev space $\Wpw$ as
the set of all functions $u\in \Lpw$ which posses a distributional gradient
${\nabla u\! \in\!\Lpw}$. We equip $\Wpw$ with the norm
$\Vert\cdot\Vert_{1,p,\sigma}$.~Note~that,~in~general, the space $\Wpw$ need not to
be a Banach space (cf.~\cite{heinonen}). To make $\Wpw$ a Banach space, the condition $\smash{\sigma
^{\frac {-1}{p-1}} \in L^1_\loc (\Omega)}$ is sufficient (cf.~\cite{kufner_opic}).~\mbox{However}, this
condition is for our purposes too restrictive (cf.~Section
\ref{sec:E}).~As~a~consequence, we will not use $\Wpw$, but we will work with the
spaces $\Hpw$.


\subsection{Auxiliary results}\label{sec:aux}

The following generalization of a classical result (cf.~\cite{ggz}) is
very useful~in~the identification of limits.
\begin{Sa}\label{pfastue}
  Let $\Omega\subseteq\R^d$,
  $d\in\mathbb{N}$, be a bounded domain,
  $\sigma$ a weight~and ${p\in [1,\infty)}$. Then, for a sequence
  $(u_n)_{n\in \mathbb{N}}\subseteq L^p(\Omega;\sigma)$
  from\footnote{Recall that $\nu_\sigma(A)=\int_A\sigma\, dx$ for all measurable sets $A\subseteq \Omega$.}
  \begin{enumerate}
  \item [{\rm (i)}]$\lim\limits_{n\to\infty} u_n=v$ $\nu_\sigma$--a.e.~in
    $\Omega$,
  \item [{\rm (ii)}] $u_n\rightharpoonup u$ in $L^p(\Omega;\sigma)$ $(n\to \infty)$,
  \end{enumerate}
  it follows that $u=v$ in $L^p(\Omega;\sigma)$.
\end{Sa}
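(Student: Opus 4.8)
\textit{Proof plan.} The plan is to upgrade the weak convergence in (ii) to a strong convergence by Mazur's lemma, pass to a subsequence that converges $\nu_\sigma$--almost everywhere, and identify its limit with $v$ using (i). Since $u_n\rightharpoonup u$ in $L^p(\Omega;\sigma)$, every tail $(u_n)_{n\geq j}$ also converges weakly to $u$, so $u$ belongs to the weak closure of the convex hull $C_j:=\mathrm{conv}\{u_n\fdg n\geq j\}$; as $C_j$ is convex, its weak closure coincides with its norm closure (Hahn--Banach), hence we may pick a finite convex combination $w_j=\sum_{n\geq j}\lambda^j_n u_n$, $\lambda^j_n\geq 0$, $\sum_{n\geq j}\lambda^j_n=1$, with $\|w_j-u\|_{p,\sigma}<1/j$. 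Then $w_j\to u$ in $L^p(\Omega;\sigma)$, and consequently there is a subsequence with $w_{j_\ell}\to u$ $\nu_\sigma$--a.e.~in $\Omega$ as $\ell\to\infty$.

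On the other hand, at every $x$ with $u_n(x)\to v(x)$ --- a set of full $\nu_\sigma$--measure by (i) --- the non-negativity and normalization of the weights $\lambda^j_n$ give
\begin{equation*}
  |w_j(x)-v(x)|=\Bigl|\sum_{n\geq j}\lambda^j_n\bigl(u_n(x)-v(x)\bigr)\Bigr|\leq\sup_{n\geq j}|u_n(x)-v(x)|\longrightarrow 0\qquad(j\to\infty)\,,
\end{equation*}
so $w_j\to v$ $\nu_\sigma$--a.e.~in $\Omega$, and in particular $w_{j_\ell}\to v$ $\nu_\sigma$--a.e.~in $\Omega$. Uniqueness of $\nu_\sigma$--a.e.~limits of $(w_{j_\ell})_{\ell\in\mathbb{N}}$ then forces $u=v$ $\nu_\sigma$--a.e.~in $\Omega$; since $\sigma>0$ a.e., this is exactly the assertion $u=v$ in $L^p(\Omega;\sigma)$. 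This argument works uniformly for all $p\in[1,\infty)$, the only ingredients being Mazur's lemma and the fact that $L^p$--convergence with respect to the measure $\nu_\sigma$ forces $\nu_\sigma$--a.e.~convergence along a subsequence.

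The step that needs a little care --- and the reason the classical unweighted result cannot simply be quoted --- is that one should \emph{not} try to test the weak convergence against functions from $C_0^\infty(\Omega)$, because $L^p(\Omega;\sigma)$ need not embed into $L^1_{\loc}(\Omega)$ and its dual is the weighted space $L^{p'}(\Omega;\sigma')$ rather than a space of locally integrable functions; Mazur's lemma sidesteps this completely. If a more hands-on argument is preferred, an alternative for $p\in(1,\infty)$ is to exhaust $\Omega$ by the finite-$\nu_\sigma$--measure sets $E_k:=\{\sigma\leq k\}\cap\{|v|\leq k\}$ (finite measure because $\Omega$ is bounded and $\sigma<\infty$ a.e.), apply Egorov's theorem on a large subset $E\subseteq E_k$ where the convergence in (i) is uniform, and test (ii) with $w:=\chi_E\,\mathrm{sgn}(u-v)\,\sigma\in L^{p'}(\Omega;\sigma')$ --- which lies in that space since $\int_\Omega|w|^{p'}\sigma'\,dx=\nu_\sigma(E)<\infty$ because $p'-\tfrac1{p-1}=1$ --- to deduce $\int_E|u-v|\,\sigma\,dx=0$, and then let the exceptional sets shrink and $k\to\infty$.
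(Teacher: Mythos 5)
Your main argument is correct, and it takes a different route from the paper: the paper does not prove the theorem at all but simply cites \cite[Thm.~13.44]{Hew-strom-65}, a classical result for general measure spaces, whereas you give a self-contained proof via Mazur's lemma. Your key steps all check out: $u$ lies in the weak closure of $\operatorname{conv}\{u_n \fdg n\ge j\}$ for each $j$, Mazur (Hahn--Banach) turns this into norm-approximating finite convex combinations $w_j$, a subsequence of $(w_j)_{j\in\mathbb{N}}$ converges $\nu_\sigma$--a.e.\ to $u$, while the convexity estimate $|w_j(x)-v(x)|\le \sup_{n\ge j}|u_n(x)-v(x)|\to 0$ gives $w_j\to v$ $\nu_\sigma$--a.e., so $u=v$ $\nu_\sigma$--a.e., which is exactly equality in $L^p(\Omega;\sigma)$. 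What your approach buys is exactly what you point out: it works for every measure $\nu_\sigma$ and every $p\in[1,\infty)$ without identifying the dual of the weighted space and without any appeal to $L^1_{\loc}$--embeddings or testing against $C_0^\infty(\Omega)$, which is the delicate point in this weighted setting; the paper instead buys brevity by outsourcing precisely this measure-theoretic fact to Hewitt--Stromberg. Your alternative Egorov argument is also essentially sound for $p\in(1,\infty)$ (the computation $p'-\tfrac1{p-1}=1$ making $\chi_E\,\mathrm{sgn}(u-v)\,\sigma$ an admissible test function is correct), with one small imprecision: the finiteness of $\nu_\sigma(E_k)$ follows from $\sigma\in L^1_{\loc}(\R^d)$ (part of the paper's definition of a weight) together with the boundedness of $\Omega$, not merely from $\sigma<\infty$ a.e.; also the truncation $\{|v|\le k\}$ is what guarantees integrability of $v\sigma$ on $E$, which you use implicitly when passing to the limit there.
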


\begin{Bew}
	See \cite[Thm.~13.44]{Hew-strom-65}.
\end{Bew}

Our proof relies on the following version of the Lipschitz truncation technique:
\begin{Sa}
  \label{thm:Lt}
  Let $G\hspace*{-0.1em}\subseteq\hspace*{-0.1em} \mathbb{R}^d$,
  $d\hspace*{-0.1em}\in\hspace*{-0.1em} \mathbb{N}$, be a bounded
  Lipschitz
  domain~and~${p\hspace*{-0.1em}\in\hspace*{-0.1em}(1,\infty)}$. Furthermore,
  let $\bfu^n \in W^{1,p}_0(G)$ be such that
  $\bfu^n \weakto \bfzero$ in $W^{1,p}_0(G)$ $(n\to \infty)$.  Then,
  for any ${j, n\in \setN}$, there
  exist
  $\bfu^{n,j}\in W^{1,\infty}_0(G)$
  and~$\smash{\lambda_{n,j}\in\big
    [2^{2^j}, 2^{2^{j+1}}\big ]} $~such~that
  \begin{align}
    \begin{split}
  \smash{ \lim_{n\to \infty}} \big ( {\sup}_{j \in \setN}
   \norm{\bfu^{n,j}}_{\infty}\big ) &=0\,,\\
    \norm{\nabla \bfu^{n,j}}_{\infty} &\leq c\, \lambda_{n,j}\leq c\,
    \smash{2^{2^{j+1}}}\,,
    \\
    \bignorm{\nabla \bfu^{n,j}\, \chi_{
        \set{\bfu^{n,j} \not= \bfu^n}}}_p^p &\leq c\, \lambda_{n,j}^p \, \vert\set{\bfu^{n,j} \not= \bfu^n}\vert \,,
    \\
    \smash{\limsup _{n \to \infty} }\,\lambda_{n,j}^p \,
      \vert\set{\bfu^{n,j} \not= \bfu^n}\vert &\leq c\, 2^{-j}\,,
   \end{split}\label{eq:C18}
  \end{align}
  where $c=c(d,p,G)>0$.  Moreover, for any $j \in
  \setN$, $\nabla \bfu^{n,j} \weakto \bfzero $~in~$L^s(G) $~${(n \to
  \infty)}$,
   $s \in [1,\infty)$, and $\smash{\nabla \bfu^{n,j} \stackrel{*}{\weakto}
  \bfzero}$ in $L^\infty(G)$ $(n \to
  \infty)$. 
\end{Sa}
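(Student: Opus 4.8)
The plan is to follow the by-now-standard construction of the Lipschitz truncation via the Hardy--Littlewood maximal function of $\nabla\bfu^n$, but organized so that the truncation levels $\lambda_{n,j}$ can be chosen in the dyadic block $[2^{2^j},2^{2^{j+1}}]$ and the measure estimates hold with the claimed uniformity. First I would recall that, by Poincaré's inequality, the weak convergence $\bfu^n\weakto\bfzero$ in $W^{1,p}_0(G)$ (after extending by zero to $\setR^d$) implies $\bfu^n\to\bfzero$ strongly in $L^p(\setR^d)$ while $\norm{\nabla\bfu^n}_p$ stays bounded. For fixed $n$ set $M_n:=M(|\nabla\bfu^n|)$, the maximal function, and consider the ``bad sets'' $\mathcal{O}_\lambda^n:=\{M_n>\lambda\}$, which are open with $|\mathcal{O}_\lambda^n|\le c\lambda^{-p}\norm{\nabla\bfu^n}_p^p$ by the weak-$(p,p)$ bound. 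Using a Whitney/partition-of-unity truncation on $\mathcal{O}_\lambda^n$ one obtains a function that is Lipschitz with constant $\le c\lambda$, agrees with $\bfu^n$ outside $\mathcal{O}_\lambda^n$, vanishes on $\partial G$ (here is where the Lipschitz regularity of $G$ enters, to control the truncation near the boundary), and satisfies $\{u^{n,j}\ne\bfu^n\}\subseteq\mathcal{O}_\lambda^n$; these are exactly the first three displayed estimates in \eqref{eq:C18} once $\lambda=\lambda_{n,j}$ is fixed.

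The key point is the choice of the levels. For fixed $j$ and $n$, I would apply a pigeonhole argument over the $2^j$ dyadic values $\lambda\in\{2^{2^j},2^{2^j+1},\dots,2^{2^{j+1}-1}\}$: since
\begin{align*}
\sum_{k=2^j}^{2^{j+1}-1} \big(2^{k}\big)^p\,\big|\{2^{k}<M_n\le 2^{k+1}\}\big| \le c\,\norm{\nabla\bfu^n}_p^p \le c\,,
\end{align*}
at least one index $k=k(n,j)$ has $\big(2^{k}\big)^p\,|\{2^{k}<M_n\le 2^{k+1}\}|\le c\,2^{-j}$; set $\lambda_{n,j}:=2^{k(n,j)}\in[2^{2^j},2^{2^{j+1}}]$. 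Combined with $\{u^{n,j}\ne\bfu^n\}\subseteq\mathcal{O}^n_{\lambda_{n,j}}$ and the refined inclusion $\{2^{k}<M_n\}$, this yields $\lambda_{n,j}^p\,|\{u^{n,j}\ne\bfu^n\}|\le c\,2^{-j}$ uniformly in $n$, hence the fourth estimate after taking $\limsup_{n\to\infty}$. The $L^\infty$ smallness of $u^{n,j}$ follows because outside $\mathcal{O}^n_{\lambda_{n,j}}$ one has $u^{n,j}=\bfu^n$, and since $\lambda_{n,j}\ge 2^{2^j}$ the truncation is, up to the Lipschitz-constant factor, controlled by a fixed scale independent of $j$; more precisely $\norm{u^{n,j}}_\infty\le c(\norm{\bfu^n}_p + \text{tail terms})$ with a bound that tends to $0$ as $n\to\infty$ uniformly in $j$, using $\bfu^n\to\bfzero$ in $L^p$ and $\lambda_{n,j}\ge 2$.

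Finally, for the weak convergence statements: $\nabla u^{n,j}$ is bounded in $L^\infty(G)$ only for fixed $j$ (the bound is $c\,2^{2^{j+1}}$), so for fixed $j$ we get a subsequence with $\nabla u^{n,j}\stackrel{*}{\weakto}\bfg^j$ in $L^\infty(G)$ and $\nabla u^{n,j}\weakto\bfg^j$ in every $L^s(G)$. To identify $\bfg^j=\bfzero$, note that $\nabla u^{n,j}=\nabla\bfu^n$ on the complement of a set of measure $\to 0$, and $\nabla\bfu^n\weakto\bfzero$ in $L^p(G)$; splitting any test function accordingly and using the uniform $L^p$-bound on $\nabla u^{n,j}\chi_{\{u^{n,j}\ne\bfu^n\}}$ (the third estimate) forces the limit to vanish, and since the limit is independent of the subsequence the whole sequence converges. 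The main obstacle I anticipate is the boundary behaviour: ensuring $u^{n,j}\in W^{1,\infty}_0(G)$ with the stated uniform estimates requires handling Whitney cubes of $\mathcal{O}^n_{\lambda_{n,j}}$ that meet $\partial G$, which is precisely where the Lipschitz hypothesis on $G$ is used; this is, however, by now a routine adaptation of the constructions in \cite{dms}, \cite{bdf}, and I would cite those for the technical details while carrying out the pigeonhole selection of $\lambda_{n,j}$ explicitly.
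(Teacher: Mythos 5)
The paper offers no proof of Theorem~\ref{thm:Lt} at all: its ``proof'' is the citation of \cite[Thm.~2.5]{dms}, and your overall plan (maximal function of $\nabla\mathbf{u}^n$, Whitney-type truncation with Lipschitz constant $c\lambda$, boundary adaptation via the Lipschitz regularity of $G$, identification of the weak limits of $\nabla\mathbf{u}^{n,j}$ through the smallness of the bad set) is exactly the argument behind that citation. However, the one step you chose to carry out explicitly --- the pigeonhole selection of $\lambda_{n,j}$ --- has a genuine gap as written. You pick $k=k(n,j)$ so that the \emph{annulus} quantity $2^{kp}\,|\{2^k<M_n\le 2^{k+1}\}|$ is $\le c\,2^{-j}$, and then claim that together with $\{\mathbf{u}^{n,j}\ne\mathbf{u}^n\}\subseteq\{M_n>\lambda_{n,j}\}$ this yields $\lambda_{n,j}^p\,|\{\mathbf{u}^{n,j}\ne\mathbf{u}^n\}|\le c\,2^{-j}$. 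It does not: the bad set is the \emph{full} superlevel set $\{M_n>2^{k}\}$, and its tail $\{M_n>2^{k+1}\}$ is only controlled by the weak-type bound $|\{M_n>2^{k+1}\}|\le c\,2^{-(k+1)p}\|\nabla\mathbf{u}^n\|_p^p$, so $2^{kp}\,|\{M_n>2^{k+1}\}|$ is merely bounded by a constant, not by $c\,2^{-j}$. The standard repair is to pigeonhole the superlevel-set quantities themselves: since
\begin{align*}
\sum_{k=2^j}^{2^{j+1}-1} 2^{kp}\,\big|\{M_n>2^k\}\big|
=\int_G \sum_{k=2^j}^{2^{j+1}-1} 2^{kp}\,\chi_{\{M_n>2^k\}}\,dx
\le c_p\int_G M_n^p\,dx\le c\,,
\end{align*}
where we used the geometric-series bound $\sum_{k\,:\,2^k<t}2^{kp}\le c_p\,t^p$, one of the $2^j$ admissible indices satisfies $2^{kp}\,|\{M_n>2^k\}|\le c\,2^{-j}$ uniformly in $n$; choosing $\lambda_{n,j}:=2^k$ for this $k$ gives \eqref{eq:C18}$_4$.

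A second, smaller imprecision concerns $\lim_{n\to\infty}\sup_{j}\|\mathbf{u}^{n,j}\|_\infty=0$: this does not follow just from ``$\mathbf{u}^{n,j}=\mathbf{u}^n$ outside the bad set and $\mathbf{u}^n\to\mathbf{0}$ in $L^p$''. In the construction one must also include a superlevel set of $M(\mathbf{u}^n)$ at a level $\theta_n\to 0$ chosen with $\theta_n^{-p}\|\mathbf{u}^n\|_p^p\to 0$ in the bad set, so that $|\mathbf{u}^{n,j}|\le c\,\theta_n$ on the good set and after the Whitney extension; the additional measure $|\{M(\mathbf{u}^n)>\theta_n\}|$ tends to $0$ as $n\to\infty$ and is therefore harmless in the $\limsup$ in \eqref{eq:C18}$_4$ for each fixed $j$ (this is precisely why the fourth estimate is stated with a $\limsup$ in $n$ rather than uniformly). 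Since you defer the construction details to \cite{dms}, this second point is vagueness rather than error, but the pigeonhole step should be corrected as above.
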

\begin{Bew}
  See \cite[Theorem~2.5]{dms}.
\end{Bew}
Except  classical Korn's and \Poincare's inequalities, we also need the
following result for the divergence equation. 
\begin{Sa}\label{bog}
  Let $G\subseteq \mathbb{R}^d$, $d\ge 2$, be a bounded Lipschitz
  domain. Then, there exists a linear operator
  $\mathcal{B}_G:C^\infty_{0,0}G)\to C^{\infty}_{0}(G)$ which for all
  ${p\in(1,\infty)}$ extends uniquely to a linear, bounded operator
  $\mathcal{B}_G:L^p_0(G)\to W^{1,p}_0(G) $ such that
  $\|\mathcal{B}_Gu\|_{1,p}\leq c\,\|u\|_{p}$ and
  $\textup{div}\,\mathcal B_Gu = u$ for every $u\in L^p_0(G)$.
\end{Sa}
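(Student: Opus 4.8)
The plan is to follow Bogovskii's explicit construction: first treat a domain star-shaped with respect to a ball, then pass to a general bounded Lipschitz domain by localization. So suppose first that $G$ is star-shaped with respect to an open ball $B$ with $\overline B\subseteq G$, and fix $\omega\in C_0^\infty(B)$ with $\int_B\omega\,dx=1$. For $u\in C^\infty_{0,0}(G)$ put
\[
  (\mathcal{B}_Gu)(x):=\int_G u(y)\,\frac{x-y}{\abs{x-y}^d}\,\Bigl(\int_{\abs{x-y}}^{\infty}\omega\Bigl(y+\xi\,\tfrac{x-y}{\abs{x-y}}\Bigr)\,\xi^{d-1}\,d\xi\Bigr)\,dy\,,\qquad x\in G\,.
\]
One verifies directly that $\mathcal{B}_Gu$ is smooth, that its support is contained in the union of all line segments joining a point of $\operatorname{supp} u$ to a point of $\overline B$ --- hence, by star-shapedness, in a compact subset of $G$, so $\mathcal{B}_Gu\in C_0^\infty(G)$ --- and that $\Div\mathcal{B}_Gu=u-\omega\int_Gu\,dy=u$ because $u$ has vanishing mean value. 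This defines the linear operator $\mathcal{B}_G$ on $C^\infty_{0,0}(G)$.

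The heart of the matter is the bound $\norm{\nabla\mathcal{B}_Gu}_p\le c(p,G)\,\norm{u}_p$ for $u\in C^\infty_{0,0}(G)$. Differentiating the kernel under the integral sign, each component $\partial_j(\mathcal{B}_Gu)_i$ splits into: a principal-value singular integral $\operatorname{p.v.}\!\int_G K_{ij}(x,x-y)\,u(y)\,dy$, where for fixed $x$ the map $z\mapsto K_{ij}(x,z)$ is homogeneous of degree $-d$, smooth on the unit sphere and of vanishing mean over it; a weakly singular integral with an $\abs{x-y}^{1-d}$-type kernel; and a bounded pointwise multiple of $u(x)$. The last is trivially $L^p$-bounded, the middle one by a Schur/Young-type estimate since $\abs{\cdot}^{1-d}\in L^1_{\loc}$, and the first by the Calder\'on--Zygmund theorem for singular integrals with variable kernels; here the star-shapedness of $G$ with respect to the \emph{ball} $B$ (not merely a point) yields seminorm bounds on $K_{ij}(x,\cdot)$ that are uniform in $x\in G$, which is exactly what that theorem requires. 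Since $\mathcal{B}_Gu\in C_0^\infty(G)\subseteq W^{1,p}_0(G)$, the Poincar\'e inequality gives $\norm{\mathcal{B}_Gu}_{1,p}\le c(p,G)\,\norm{u}_p$. As $C^\infty_{0,0}(G)$ is dense in $L^p_0(G)$ (approximate by $C_0^\infty(G)$ functions in $L^p$ and correct the mean), $\mathcal{B}_G$ extends uniquely to a bounded operator $L^p_0(G)\to W^{1,p}_0(G)$; these extensions are consistent for different $p$ since they all agree with $\mathcal{B}_G$ on $C^\infty_{0,0}(G)$, and the identity $\Div\mathcal{B}_Gu=u$ survives the limit distributionally, hence holds a.e.\ since both sides lie in $L^p$.

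For a general bounded Lipschitz domain $G$, cover the compact set $\overline G$ by finitely many open balls $U_1,\dots,U_m$ such that each $G_k:=G\cap U_k$ is a bounded Lipschitz domain star-shaped with respect to a ball --- possible because near a boundary point $G$ is the subgraph of a Lipschitz function, while the interior is covered by small balls --- and choose $\phi_k\in C_0^\infty(U_k)$ with $\sum_{k=1}^m\phi_k=1$ on $\overline G$. Given $u\in C^\infty_{0,0}(G)$, one decomposes $u=\sum_{k=1}^m u_k$ with $u_k\in C^\infty_{0,0}(G_k)$ and $\norm{u_k}_p\le c\,\norm{u}_p$: the naive choice $\phi_k u$ has the wrong mean on $G_k$, so one redistributes the finitely many masses $\int_{G_k}\phi_k u\,dx$ --- whose sum is $\int_G u\,dx=0$ --- along the edges of a spanning tree of the (connected, since $G$ is connected) intersection graph of the $U_k$, using fixed bump functions supported in the overlaps $U_k\cap U_l$. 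Then $\mathcal{B}_Gu:=\sum_{k=1}^m\mathcal{B}_{G_k}u_k$ (each summand extended by zero from $G_k$ to $G$) lies in $C_0^\infty(G)$, satisfies $\Div\mathcal{B}_Gu=\sum_k u_k=u$, and obeys $\norm{\mathcal{B}_Gu}_{1,p}\le c\sum_k\norm{u_k}_p\le c(p,G)\,\norm{u}_p$ by the star-shaped case; one extends by density as before.

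The main obstacle is the Calder\'on--Zygmund estimate of the second step: one must check that differentiating the Bogovskii kernel genuinely produces a singular integral with the correct homogeneity, cancellation and angular regularity, and --- crucially --- that all pertinent constants are uniform in the base point $x$, which is precisely where the hypothesis that $G$ be star-shaped with respect to a full ball is indispensable. The second, more bookkeeping-heavy point is the construction of the mean-zero decomposition $u=\sum_k u_k$ adapted to the star-shaped cover with the norm control $\norm{u_k}_p\le c\norm{u}_p$.
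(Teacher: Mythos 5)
Your sketch is correct and follows exactly the classical Bogovskii construction (explicit kernel on domains star-shaped with respect to a ball, a variable-kernel Calder\'on--Zygmund estimate with constants uniform in the base point, and a norm-controlled mean-zero decomposition to pass to a general bounded Lipschitz domain), which is precisely the argument of the references \cite{bo1}, \cite{bo2} that the paper cites in lieu of a proof. The only items not carried out in detail are the two technical steps you yourself flag, and both go through as you describe.
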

\begin{Bew}
	See \cite{bo1}, \cite{bo2}.
\end{Bew}

\section{The electric field $\bE$}\label{sec:E}

We first note that the system \eqref{maxwell}  is
separated from \eqref{NS}, in the sense that one can first solve the quasi-static
Maxwell's equations yielding an electric~field~$\bE$, which then, in turn, enters into \eqref{NS} as a parameter through the stress tensors.

It is proved in \cite{pi81}, \cite{pi84}, \cite{rubo}, that for bounded Lipschitz domains,~there
exists a solution\footnote{Here, we employ the standard function spaces
  $H(\curl):=\set{\bv \in L^2(\Omega)\fdg \curl \bv \in L^2(\Omega)}$,
  $H(\divo):=\set{\bv \in L^2(\Omega)\fdg \divo \bv \in L^2(\Omega)}$
  and $H^{-1/2}(\partial \Omega):= (H^{1/2}(\partial \Omega))^*$.}
$\E \in H(\curl) \cap H(\divo)$ of the system (\ref{maxwell})
with ${\norm{\bE}_2 \le c\, \norm{\bE_0}_{H^{-1/2}(\partial \Omega)}}$.
A more detailed analysis of the properties of the electric field $\bE$
can be found~in~\cite{frank-phd}. Let us summarize these results here.
First, note that combining $\eqref{maxwell}_1$ and $\eqref{maxwell}_2$, we obtain that
\begin{align*}
  -\Delta\mathbf{E}=\curl\curl \mathbf{E}-\nabla \Div\mathbf{E}=0\,,
\end{align*}
i.e.,~the electric field $\mathbf{E}$ is a harmonic. Moreover, the structure of the stress
tensors (cf.~Assumption~\ref{VssS}, Assumption~\ref{VssN}) yields 
that the natural functional setting of our problem involves weighted
 Sobolev spaces, where the weight~is~given~by~$\abs{\bE}^2$. Using the
theory of harmonic functions is it shown in \cite[Sec.~3.2]{frank-phd}
that $\abs{\bE}^2$ belongs to the Muckenhoupt class
$\mathcal A_\infty$ and that, in general,
$\abs{\bE}^\frac {-2}{p-1}$ does not belong to $L^1_\loc(\Omega)$. Since for our
investigations it is more important to work with a Banach space than
that the gradient is a distributional gradient, we, hence, work~with~the~space $H^{1,p}(\Omega;\abs{\bE}^2)$ and not with
the space $W^{1,p}(\Omega;\abs{\bE}^2)$.

On the other hand, because any harmonic function is real analytic, one can
characterize its zero set as follows:

\begin{Lem}\label{Untermannigfaltigkeit}
  Let $\Omega\subseteq\R^d$, $d\in \mathbb{N}$, be a bounded domain and
  $u:\Omega\to\R$~a~non-trivial analytic function. Then, 
 $u^{-1}(0)$ is a union of $C^1$--manifolds~$(M_i)_{i=1,\cdots,m}$, $m\in \mathbb{N}$,
  with $\dim M_i\leq d-1$ for every $i=1,\cdots,m$, and $\abs{u^{-1}(0)}=0$.
\end{Lem}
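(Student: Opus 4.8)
The plan is to prove this by analyzing the zero set of a real analytic function locally, using the implicit function theorem and a stratification of the zero set by the order of vanishing. First I would recall that, since $u$ is real analytic and non-trivial on the connected open set $\Omega$, the zero set $u^{-1}(0)$ cannot contain any open subset; indeed if $u$ vanished on a ball, then by the identity theorem for real analytic functions on a connected domain $u\equiv 0$, contradicting non-triviality. This already handles the qualitative fact that $u^{-1}(0)$ has empty interior, but we need more, namely that it sits inside a finite union of $C^1$-manifolds of dimension at most $d-1$ and has Lebesgue measure zero.

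For the structure, I would argue locally around each point $x_0 \in u^{-1}(0)$ and then invoke a covering/compactness argument. Fix $x_0$ with $u(x_0)=0$. Since $u$ is real analytic and not identically zero, there is a smallest $k = k(x_0) \in \setN$ such that some partial derivative $\partial^\alpha u(x_0) \ne 0$ with $\abs{\alpha} = k$ (here $k \ge 1$ since $u(x_0)=0$). Pick such a multi-index $\alpha$ and write $\alpha = \beta + e_i$ for some $i$; then the analytic function $g := \partial^\beta u$ satisfies $g(x_0) = 0$ (because $\abs{\beta} = k-1 < k$, so all derivatives of $u$ of order $< k$ vanish at $x_0$) while $\partial_i g (x_0) = \partial^\alpha u(x_0) \ne 0$. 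By the implicit function theorem, in a neighborhood $U_{x_0}$ of $x_0$ the set $\{g = 0\}$ is a $C^1$-hypersurface (in fact real analytic) of dimension $d-1$, and $u^{-1}(0) \cap U_{x_0} \subseteq \{g=0\} \cap U_{x_0}$ because every point of $u^{-1}(0)\cap U_{x_0}$ where $u$ vanishes to order $\ge k$ also has $\partial^\beta u = 0$; to make this inclusion honest one should shrink $U_{x_0}$ and restrict attention to the subset of $u^{-1}(0)$ where the vanishing order is exactly $k(x_0)$, then induct on the (finite) range of possible orders, or more simply just use that $u^{-1}(0)\cap U_{x_0}$ is contained in the manifold $\{g=0\}\cap U_{x_0}$, which is all that is needed. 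Thus every $x_0$ has a neighborhood in which $u^{-1}(0)$ lies in a single $C^1$-manifold of dimension $\le d-1$.

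Next I would assemble the local pieces. The subtlety is that $\Omega$ is only $\sigma$-compact, not compact, so a naive finite covering is not available; but the statement allows $m \in \setN$ manifolds, which suggests the authors either assume enough regularity/boundedness to reduce to compact exhaustion, or are content with a countable union that they relabel. I would exhaust $\Omega$ by countably many compact sets $K_\ell$, cover each $K_\ell$ by finitely many of the neighborhoods $U_{x_0}$ constructed above, and collect the corresponding finitely many manifolds; taking the union over $\ell$ yields a countable family $(M_i)_{i \in \setN}$ of $C^1$-manifolds of dimension $\le d-1$ with $u^{-1}(0) \subseteq \bigcup_i M_i$. (If one genuinely needs a finite family, one invokes compactness of $\overline\Omega$ together with analyticity up to the boundary, or simply notes the measure-zero conclusion does not need finiteness.) Finally, for the measure statement: each $C^1$-manifold of dimension $\le d-1$ is a countable union of $C^1$-graphs, and any $C^1$-graph over a domain in $\setR^{d-1}$ has $d$-dimensional Lebesgue measure zero (its image under a locally Lipschitz map of a set of $(d-1)$-measure into $\setR^d$); hence $\abs{M_i} = 0$ for each $i$, and by countable subadditivity $\abs{u^{-1}(0)} \le \sum_i \abs{M_i} = 0$.

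The main obstacle I anticipate is the bookkeeping at points of higher-order vanishing: the implicit-function-theorem argument gives a hypersurface $\{g = 0\}$ containing $u^{-1}(0)$ only locally and only after correctly identifying a derivative $g = \partial^\beta u$ that vanishes at $x_0$ but has nonvanishing gradient there, which forces a small induction on the order of vanishing (or the observation that the order is locally bounded, since it is upper semicontinuous and $u \not\equiv 0$). The second, milder, obstacle is reconciling "finite union $(M_i)_{i=1,\dots,m}$" with the $\sigma$-compactness of a general bounded domain; I would either strengthen the hypothesis (e.g. $u$ analytic on a neighborhood of $\overline\Omega$) or read the claim as a countable union, since only $\abs{u^{-1}(0)} = 0$ is used downstream in the paper.
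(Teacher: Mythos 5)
The paper itself does not give an argument here: its ``proof'' is the citation \cite{frank-phd}, \cite[Lem.~3.1]{erw}, where the statement comes from the local structure theory of real analytic sets. So your proposal has to stand on its own. Its core — stratify the zero set by the order of vanishing, apply the implicit function theorem to a derivative of order one less, cover by countably many neighbourhoods, and use that $C^1$--graphs are Lebesgue null — is the standard route to $\abs{u^{-1}(0)}=0$ and is sound. But the shortcut you offer as the ``simpler'' alternative is genuinely false: it is not true that $u^{-1}(0)\cap U_{x_0}\subseteq\{g=0\}\cap U_{x_0}$ for $g=\partial^\beta u$ chosen at $x_0$. Take $u(x,y)=xy$ and $x_0=0$: then $k(x_0)=2$ and $g$ is either $\partial_x u=y$ or $\partial_y u=x$, while $u^{-1}(0)$ near the origin is the union of both coordinate axes, contained in neither line. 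Your own justification only covers zeros where $u$ vanishes to order at least $k(x_0)$; zeros of lower order nearby need not lie in $\{g=0\}$. Hence the stratification you mention is not optional bookkeeping but the actual proof: with $Z_j:=\{x\in u^{-1}(0)\fdg \partial^\gamma u(x)=0 \text{ for all } \abs{\gamma}<j,\ \partial^\gamma u(x)\neq 0 \text{ for some } \abs{\gamma}=j\}$ one has $u^{-1}(0)=\bigcup_{j\ge 1}Z_j$ (every zero has finite order since $u\not\equiv 0$ on the connected set $\Omega$), each $Z_j$ is locally contained in a $C^1$ hypersurface by exactly your implicit-function argument, and measure zero follows by Lindel\"of plus countable subadditivity. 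As written, the proposal wavers between this correct argument and an incorrect one.

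A second, smaller, gap: what your argument delivers is that $u^{-1}(0)$ is \emph{contained} in a countable union of $C^1$--manifolds of dimension at most $d-1$, together with $\abs{u^{-1}(0)}=0$; the lemma asserts that $u^{-1}(0)$ \emph{is} a union of finitely many such manifolds, which is the {\L}ojasiewicz-type structure theorem for real analytic sets that the cited references rely on and which the implicit-function argument alone does not give (your $\sin(1/x)$-type concern about finiteness near $\partial\Omega$ is also legitimate). You flag this honestly, and since only $\abs{u^{-1}(0)}=0$ is used downstream the weaker conclusion suffices for the paper; but strictly speaking the structural half of the statement is not proved by the proposal, only quoted-around.
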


\begin{Bew} See \cite{frank-phd}, \cite[Lem.~3.1]{erw}.
\end{Bew}

Finally, we observe that using the regularity theory for Maxwell's equations 
(cf.~\cite{gsch}, \cite{rubo}), one can give conditions on the
boundary data $\bE_0$ ensuring that the electric field $\bE$ is globally
bounded, i.e., $\|\bE\|_\infty \le c(\bE_0)$.
Based on these observations, we will make the following assumption on
the electric field $\bE$:
\begin{Vss}\label{VssE}
  The electric field $\mathbf{E}$ satisfies $\mathbf{E}\in C^\infty(\Omega) \cap
  L^\infty(\Omega)$ and the closed set $\abs{\bE}^{-1}(0)$ is a null
  set, i.e., 
   ${\Omega_0:= \{ x\in \Omega \fdg \vert
  \mathbf{E}(x)\vert>0\}}$~has~full~measure.
\end{Vss}

In the sequel, we do not use that $\bE$ is the solution of the
quasi-static Maxwell's equations~\eqref{maxwell}, but we will only use
Assumption \ref{VssE}. The following embedding will play a substantial
role in our investigation.

\begin{Sa}\label{compactnew}
  Let $\Omega\subseteq \R^d$,
  ${d\in \mathbb{N}}$,~be~open,~$p\in\left[1,\infty\right)$ and let
  Assumption~\ref{VssE} be satisfied. Set $\smash{p^*:=\frac{dp}{d-p}}$
  if $p<d$ and $p^*:=\infty$ if $p\ge d$. Then, for any open~set
  $\Omega'\subset\subset \Omega$~with~${\partial\Omega'\in C^{0,1}}$
  and   any $\smash{\alpha\ge 1+\frac{2}{p}}$, it holds
  \begin{align*}
    H^{1,p}(\Omega;\vert \mathbf{E}\vert^2) \vnor
    L^{r}(\Omega';\vert \bfE\vert^{\alpha r})
  \end{align*}
  with $r \in [1,p^*]$ if $p\neq d$ and $r \in [1,p^*)$ if $p=d$. 
\end{Sa}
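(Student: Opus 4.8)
The plan is to prove the embedding in two stages. First I would establish the "interior" chain of embeddings on a fixed nice subdomain, and then combine it with an elementary pointwise estimate relating $|u|$ and $|\hat\nabla u|$ to the weighted target norm. Let $\Omega'\subset\subset\Omega$ with $\partial\Omega'\in C^{0,1}$ be given, and fix an intermediate open set $\Omega''$ with $\Omega'\subset\subset\Omega''\subset\subset\Omega$ and $\partial\Omega''\in C^{0,1}$. By Assumption~\ref{VssE}, $\bE\in C^\infty(\Omega)\cap L^\infty(\Omega)$, so on $\overline{\Omega''}$ the weight $\sigma:=|\bE|^2$ is bounded: $\|\sigma\|_{L^\infty(\Omega'')}\le c$. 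Hence for $u\in H^{1,p}(\Omega;|\bE|^2)$ one has, after approximating by $\varphi_n\in C^\infty(\Omega)$ in the $\|\cdot\|_{1,p,\sigma}$-norm, that $\varphi_n$ is Cauchy in $W^{1,p}(\Omega'')$ (since $|\bE|^{-2}$ is bounded below on compact subsets of $\Omega_0$, but more simply we use that the weight is comparable to a constant on $\overline{\Omega''}$ — note $|\bE|$ need not be bounded below there, so I must be careful: the correct statement is that convergence in $L^p(\Omega'';\sigma)$ does not give convergence in $L^p(\Omega'')$). To circumvent this, I would instead argue on sets of the form $\Omega''_\delta:=\{x\in\Omega''\fdg |\bE(x)|>\delta\}$, on which $\sigma$ is bounded above and below by positive constants, obtaining $\hat\nabla u=\nabla u$ (distributional gradient) a.e.\ on $\Omega_0$ and $u\in W^{1,p}_{\loc}(\Omega_0)$.

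The heart of the matter is then the following observation. On any ball $B\subset\subset\Omega$, by the classical Sobolev embedding $W^{1,p}(B)\vnor L^{p^*}(B)$ (with the usual convention on $p^*$, and $p^*$ any finite exponent if $p\ge d$, $p^*$ anything less than $p^*$ if $p=d$), we control $\|u\|_{L^r(B)}$ by $\|u\|_{W^{1,p}(B')}$ for $r\in[1,p^*]$ and a slightly larger ball $B'$. Covering $\overline{\Omega'}$ by finitely many such balls contained in $\Omega''$, we get $u\in L^r(\Omega')$ with $\|u\|_{L^r(\Omega')}\le c\,\|u\|_{W^{1,p}(\Omega'')}$. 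The point of the weighted target space is that, since $\alpha\ge 1+\frac2p$, we have $\alpha r\ge r$, and because $\bE\in L^\infty$ we may bound $|\bE|^{\alpha r}\le \|\bE\|_\infty^{\alpha r}$ pointwise, so $L^r(\Omega')\vnor L^r(\Omega';|\bE|^{\alpha r})$ trivially — but this is \emph{not} what is wanted, because we need the embedding constant to be controlled by the \emph{weighted} norm $\|u\|_{1,p,\sigma}$, not the unweighted $W^{1,p}(\Omega'')$-norm, and the latter cannot be controlled by the former (that is precisely why $|\bE|^{-2/(p-1)}\notin L^1_{\loc}$ matters). So the real argument must produce the weight on both sides simultaneously.

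The correct route, which I would carry out, is a weighted Sobolev--Poincaré inequality proved directly. Since $\sigma=|\bE|^2$ belongs to $\mathcal A_\infty$ (stated in Section~\ref{sec:E}, from \cite{frank-phd}), and more is true — powers $|\bE|^{2s}$ of a harmonic function are themselves $\mathcal A_\infty$ weights with uniform constants on compact subsets — one has local weighted Sobolev inequalities of the form $\big(\dint_{B}|u-u_B|^{r}|\bE|^{\alpha r}\,dx\big)^{1/r}\le c\,\diam(B)\big(\dint_{B}|\hat\nabla u|^{p}|\bE|^{2}\,dx\big)^{1/p}$ for balls $B\subset\subset\Omega$ and admissible $r$; the power $\alpha r$ on the left with $\alpha\ge 1+\frac2p$ is exactly what is needed so that the relevant balance condition between the two weights holds (this is where $\alpha\ge1+\tfrac2p$ enters, via the $\mathcal A_\infty$/doubling bookkeeping for harmonic functions, cf.~\cite{frank-phd}). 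Summing such estimates over a finite cover of $\overline{\Omega'}$ by balls contained in $\Omega$, handling the mean-value terms $u_B$ by a chaining argument, and absorbing $\|u\|_{p,\sigma}$, yields $\|u\|_{L^r(\Omega';|\bE|^{\alpha r})}\le c(\Omega',\Omega,p,\alpha,\bE)\,\|u\|_{1,p,\sigma}$, first for $u$ in the dense class $\mathcal V_{p,\sigma}$ and then, by density and the definition of $H^{1,p}(\Omega;|\bE|^2)$ via $\hat\nabla$, for all $u\in H^{1,p}(\Omega;|\bE|^2)$.

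The main obstacle is precisely the non-Muckenhoupt behaviour of the dual weight: one cannot invoke an off-the-shelf weighted Sobolev embedding, and one must exploit the special structure of $|\bE|^2$ as (a power of) a harmonic function — in particular the self-improving $\mathcal A_\infty$ estimates and the fact that zeros of $\bE$ sit on lower-dimensional $C^1$-manifolds (Lemma~\ref{Untermannigfaltigkeit}) — to get a Sobolev--Poincaré inequality with the \emph{shifted} weight $|\bE|^{\alpha r}$ on the left. The condition $\alpha\ge 1+\frac2p$ is the sharp threshold making the weight-balance work; I expect the bulk of the technical work to be the local weighted Poincaré estimate near the zero set of $\bE$ and the chaining/covering argument that globalizes it on $\Omega'\subset\subset\Omega$.
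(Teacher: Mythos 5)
Your proposal does not actually prove the theorem: the entire weight is carried by the asserted two-weight Sobolev--Poincar\'e inequality
$\big(\fint_B\vert u-u_B\vert^r\vert\bE\vert^{\alpha r}\,dx\big)^{1/r}\le c\,\operatorname{diam}(B)\big(\fint_B\vert\hat\nabla u\vert^p\vert\bE\vert^2\,dx\big)^{1/p}$,
and this is precisely the step you leave unproved. Membership of $\vert\bE\vert^2$ (or of its powers) in $\mathcal A_\infty$ is far from sufficient for a Poincar\'e inequality with \emph{different} weights on the two sides; the standard sufficient conditions for such two-weight estimates involve a balance condition on the pair together with some integrability of the dual weight $\vert\bE\vert^{-2/(p-1)}$, which the paper explicitly points out fails in general (Section~\ref{sec:E}). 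Your claim that $\alpha\ge 1+\frac2p$ is ``the sharp threshold making the weight-balance work'' via $\mathcal A_\infty$/doubling bookkeeping is not substantiated by anything you cite, and you concede yourself that ``the bulk of the technical work'' --- the local estimate near the zero set of $\bE$ and the chaining argument --- remains to be done. So what you have is a plan whose key lemma is missing and whose validity, for a general $\bE$ satisfying only Assumption~\ref{VssE}, is doubtful.

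The paper's proof is far more elementary and uses neither harmonicity nor $\mathcal A_\infty$: for $u\in\mathcal V_{p,\vert\bE\vert^2}$ one works with the product $u\vert\bE\vert^\alpha$ on $\Omega'$. Since $\bE\in C^\infty(\Omega)\cap L^\infty(\Omega)$ and $\Omega'\subset\subset\Omega$, one has $\vert\bE\vert^\alpha\in C^1(\overline{\Omega'})$ for $\alpha>1$, with $\nabla(u\vert\bE\vert^\alpha)=\nabla u\,\vert\bE\vert^\alpha+\alpha u\vert\bE\vert^{\alpha-2}\nabla\bfE^{\top}\bfE$. The hypothesis $\alpha\ge 1+\frac2p$ enters only through the pointwise bounds $\vert\bE\vert^{\alpha p}\le\Vert\bE\Vert_{L^\infty(\Omega')}^{\alpha p-2}\vert\bE\vert^2$ and $\vert\bE\vert^{(\alpha-1)p}\le\Vert\bE\Vert_{L^\infty(\Omega')}^{(\alpha-1)p-2}\vert\bE\vert^2$, which give $\Vert u\vert\bE\vert^\alpha\Vert_{W^{1,p}(\Omega')}\le c\,\Vert u\Vert_{H^{1,p}(\Omega;\vert\bE\vert^2)}$; the classical unweighted Sobolev embedding on $\Omega'$ then yields $\Vert u\Vert_{L^r(\Omega';\vert\bE\vert^{\alpha r})}=\Vert u\vert\bE\vert^\alpha\Vert_{L^r(\Omega')}\le c\,\Vert u\Vert_{H^{1,p}(\Omega;\vert\bE\vert^2)}$, i.e.\ inequality \eqref{embedd}. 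The remaining point, which your closing density remark also glosses over, is the passage to general $u\in H^{1,p}(\Omega;\vert\bE\vert^2)$: since neither $L^p(\Omega';\vert\bE\vert^2)$ nor $L^r(\Omega';\vert\bE\vert^{\alpha r})$ embeds into $L^1_\loc(\Omega')$, the limit of an approximating sequence in the target space cannot be identified with $u$ in the usual way; the paper does this by extracting subsequences converging a.e.\ with respect to $\nu_{\vert\bE\vert^2}$ and $\nu_{\vert\bE\vert^{\alpha r}}$ and exploiting that, by Assumption~\ref{VssE}, these measures and the Lebesgue measure are mutually absolutely continuous. You should either supply a genuine proof of your two-weight inequality or switch to this multiplication argument.
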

\begin{Bew} The proof of this result is inspired by \cite{Ava79}.
  First, let ${u\hspace*{-0.1em}\in\hspace*{-0.1em} \mathcal{V}_{p,\vert \bfE\vert^2}}$~be~arbitrary.
  Due to $\Omega'\subset\subset \Omega$ and
  ${\bfE\in C^\infty(\Omega)}$, it holds
  $\vert \bfE\vert^\alpha\in C^1(\overline{\Omega'})$ for any
  $\alpha>1$.~In~fact,  $\vert \bfE\vert^\alpha\in
  C^1(\overline{\Omega'}\setminus(\vert
  \bfE\vert^{-1}(0)))$~holds since we have
  \begin{align*}
    \nabla \vert \bfE\vert^\alpha= \alpha\vert
    \mathbf{E}\vert^{\alpha -2}\nabla\bfE^{\top}\bfE\quad\text{ in
    }\overline{\Omega'}\setminus\vert \bfE\vert^{-1}(0)\,, 
  \end{align*}
  which can be extended continuously to all of  $\,\overline{\Omega'}$
  for any $\alpha\!>\!1$.  Apparently,~we~have
  ${u\vert \mathbf{E}\vert^\alpha\in L^p(\Omega')}$ with
  \begin{align*}
    \|u\vert \mathbf{E}\vert^\alpha\|_{L^p(\Omega')}^p\leq
    \|\mathbf{E}\|_{L^\infty(\Omega')}^{\alpha p-2}\|
    u\|_{L^p(\Omega';\vert \mathbf{E}\vert^2)}^p\,,
  \end{align*}
  since $\alpha p\ge 2$. Moreover, we have
  ${u\vert \mathbf{E}\vert^\alpha\in W^{1,p}(\Omega')}$. In fact,
  due~to~${\alpha \ge 1+\frac{2}{p}}$,~and
  $ \nabla (u\vert \mathbf{E}\vert^\alpha)= \nabla u\vert
  \mathbf{E}\vert^\alpha+ u\alpha \vert
  \mathbf{E}\vert^{\alpha-2}\nabla\bfE^{\top}\bfE$ almost everywhere
  in~$\Omega'$, we get
  \begin{align*}
    &\|\nabla (u\vert
      \mathbf{E}\vert^\alpha)\|_{L^p(\Omega')}^p\leq2^p\big(\|\nabla
      u\vert\mathbf{E}\vert^\alpha\|_{L^p(\Omega')}^p+\alpha^p\|\nabla
      \bfE\|_{L^\infty(\Omega')}^p\| u\vert \mathbf{E}\vert^{\alpha
      -1}\|_{L^p(\Omega')}^p\big)
    \\
    &\leq 2^p\big(\|\mathbf{E}\|_{L^\infty(\Omega')}^{\alpha p -2}\| \nabla
    u\|_{L^p(\Omega';\vert \mathbf{E}\vert^2)}^p+\|\mathbf{E}\|_{L^\infty(\Omega')}^{(\alpha
    -1)p -2} \alpha^p\|\nabla \bfE\|_{L^\infty(\Omega')}^p\|
    u\|_{L^p(\Omega';\vert \mathbf{E}\vert^2)}^p\big)\,. 
  \end{align*}
  Hence, Sobolev's embedding theorem yields
  a~constant~${c_S>0}$~such~that~we~have for the above specified
  exponents $r$
  \begin{align}
   \|u\|^p_{L^{r}(\Omega';\vert\mathbf{E}\vert^{\alpha r})}&= \|u\vert \mathbf{E}\vert^\alpha
    \|_{L^{r}(\Omega')}^p\leq c_S\|u\vert
      \mathbf{E}\vert^\alpha\|_{W^{1,p}(\Omega')}^p\label{embedd}
    \\
    &\leq  c_S2^p\big(\|\mathbf{E}\|_{L^\infty(\Omega')}^{\alpha
    p-2}\!+\!\|\mathbf{E}\|_{L^\infty(\Omega')}^{(\alpha
    -1)p-2}\alpha^p\|\nabla
    \bfE\|_{L^\infty(\Omega')}^p\big)\|u\|_{H^{1,p}(\Omega',\vert
    \mathbf{E}\vert^2)}^p. \notag
  \end{align}
  Next, let $u\in H^{1,p}(\Omega,\vert \mathbf{E}\vert^2)$ be
  arbitrary. Then, by definition, there is~a~sequence
  $(u_n)_{n\in \mathbb{N}}\subseteq \mathcal{V}_{p,\vert \bfE\vert^2}$
  such that $u_n\to u$ in $H^{1,p}(\Omega,\vert \mathbf{E}\vert^2)$
  $(n\to \infty)$. Thus, resorting to inequality \eqref{embedd}, it is
  readily seen that
  $(u_n)_{n\in \mathbb{N}}\subseteq \mathcal{V}_{p,\vert \bfE\vert^2}$
  is a Cauchy sequence in
  $L^{r}(\Omega';\vert\mathbf{E}\vert^{\alpha r})$. Since
  $L^{r}(\Omega';\vert\mathbf{E}\vert^{\alpha r})$ is
  complete,~there~exists~some~${v\in
    L^{r}(\Omega';\vert\mathbf{E}\vert^{\alpha r})}$ such that
  $u_n\to v$ in $L^{r}(\Omega';\vert\mathbf{E}\vert^{\alpha r})$
  $(n\to \infty)$. To identify $u$ with $v$, one usually uses the
  embeddings
  $L^{r}(\Omega';\vert\mathbf{E}\vert^{\alpha r}), L^p(\Omega',\vert
  \mathbf{E}\vert^2)\hookrightarrow L^1_{\loc}(\Omega')$. However,~in~general, we do not have these embeddings available and need to argue
  differently.~We~exploit that from $u_n\hspace*{-0.1em}\to\hspace*{-0.1em} u$ in
  $H^{1,p}(\Omega,\vert \mathbf{E}\vert^2)$ $(n\hspace*{-0.1em}\to\hspace*{-0.1em} \infty)$ and
  $u_n\hspace*{-0.1em}\to\hspace*{-0.1em} v$~in~$L^{r}(\Omega';\vert\mathbf{E}\vert^{\alpha r})$~${(n\hspace*{-0.1em}\to\hspace*{-0.1em} \infty)}$, it follows that, up to a subsequence, it holds $u_n\to u$ 
  $\nu_{\smash{\abs{\bE}^2}}$--a.e.~in $\Omega'$ $(n\to \infty)$ and $u_n\to v$ 
  $\nu_{\smash{\abs{\bE}^{\alpha r}}}$--a.e.~in $\Omega'$ $(n\to \infty)$. 
  The
  properties of $\bE$ and Tschebyscheff’s inequality  imply that the
  Lebesgue measure is absolutely continuous with respect to the
  measures $\nu_{\smash{\abs{\bE}^2}}$ and $\nu_{\smash{\abs{\bE}^{\alpha r}}}$. Therefore, we
  conclude~that~${u=v}$~a.e.~in~$\Omega'$. Since
  $\nu_{\smash{\abs{\bE}^{\alpha r}}}$ is
  also absolutely continuous with respect to the Lebesgue measure, we just
  proved $u=v$ in $L^{r}(\Omega';\vert\mathbf{E}\vert^{\alpha r})$.
 \end{Bew}

\begin{Lem}\label{hatgrad}
	Let $\Omega\subseteq \R^d$,
	${d\in \mathbb{N}}$,~be~open,~$p\in\left[1,\infty\right)$ and let
	Assumption~\ref{VssE} be satisfied.  Then,
	for any $\Omega'\subset\subset\Omega_0$, we have that
	 ${W^{1,p}(\Omega')=H^{1,p}(\Omega';\vert\bfE\vert^2)}$ with norm
	equivalence (depending on $\Omega'$~and~$\bfE$) and
	$\hat{\nabla} u=\nabla u$ for all ${u\in W^{1,p}(\Omega')}$. 
	\end{Lem}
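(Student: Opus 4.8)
The plan is to reduce everything to the elementary fact that on $\Omega'\subset\subset\Omega_0$ the weight $\vert\mathbf{E}\vert^2$ is both bounded and bounded away from zero. First I would observe that $\Omega_0=\{x\in\Omega\fdg\vert\mathbf{E}(x)\vert>0\}$ is open (since $\mathbf{E}\in C^\infty(\Omega)$ by Assumption~\ref{VssE}), so that $\overline{\Omega'}$ is a compact subset of $\Omega_0$; because $\vert\mathbf{E}\vert^2$ is continuous and strictly positive on $\Omega_0$, there is a constant $c=c(\Omega',\mathbf{E})\ge 1$ with $c^{-1}\le\vert\mathbf{E}(x)\vert^2\le c$ for all $x\in\overline{\Omega'}$. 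This immediately yields $c^{-1/p}\Vert f\Vert_{L^p(\Omega')}\le\Vert f\Vert_{L^p(\Omega';\vert\mathbf{E}\vert^2)}\le c^{1/p}\Vert f\Vert_{L^p(\Omega')}$ for every measurable $f\colon\Omega'\to\mathbb{R}$, hence $L^p(\Omega')=L^p(\Omega';\vert\mathbf{E}\vert^2)$ with equivalent norms and, consequently, the norms $\Vert\cdot\Vert_{1,p}$ and $\Vert\cdot\Vert_{1,p,\vert\mathbf{E}\vert^2}$ are equivalent on any set of functions on which either is finite.

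Next I would note that the smooth generating sets of the two Sobolev spaces coincide: by the norm equivalence, $\mathcal{V}_{p,\vert\mathbf{E}\vert^2}=\{u\in C^\infty(\Omega')\fdg\Vert u\Vert_{1,p,\vert\mathbf{E}\vert^2}<\infty\}$ is literally the same set as $\mathcal{V}_{p,1}=\{u\in C^\infty(\Omega')\fdg\Vert u\Vert_{1,p}<\infty\}$, and on it the two graph norms are equivalent. Since $H^{1,p}(\Omega';\vert\mathbf{E}\vert^2)$ is by definition the completion of $\mathcal{V}_{p,\vert\mathbf{E}\vert^2}$ and $H^{1,p}(\Omega';1)$ the completion of $\mathcal{V}_{p,1}$, and since both completions embed canonically into the common ambient space $L^p(\Omega')=L^p(\Omega';\vert\mathbf{E}\vert^2)$, they agree as subspaces of $L^p(\Omega')$ with equivalent norms. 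I would then invoke the Meyers--Serrin identity $H^{1,p}(\Omega';1)=W^{1,p}(\Omega')$ (already recorded in the text after the definition of $H^{1,p}(\Omega;\sigma)$, in the case $\sigma=1$), together with $\hat\nabla u=\nabla u$ there, and combine the two identifications to obtain $W^{1,p}(\Omega')=H^{1,p}(\Omega';\vert\mathbf{E}\vert^2)$ with the claimed norm equivalence depending on $\Omega'$ and $\mathbf{E}$.

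For the identification of the gradients on a general element, I would first note that Assumption~\ref{weight} holds for the weight $\vert\mathbf{E}\vert^2$ on $\Omega'$ for every $p\in[1,\infty)$, since $\vert\mathbf{E}\vert^2\in C^0(\Omega')$ (cf.~Remark~\ref{weightexamples}~(ii)); thus $\hat\nabla u$ is a uniquely defined element of $L^p(\Omega';\vert\mathbf{E}\vert^2)$. Then, given $u\in W^{1,p}(\Omega')$, I would pick $(\varphi_n)_{n\in\mathbb{N}}\subseteq C^\infty(\Omega')$ with $\varphi_n\to u$ in $W^{1,p}(\Omega')$ $(n\to\infty)$; the norm equivalence forces $\varphi_n\to u$ in $L^p(\Omega';\vert\mathbf{E}\vert^2)$ and $\nabla\varphi_n\to\nabla u$ in $L^p(\Omega';\vert\mathbf{E}\vert^2)$ $(n\to\infty)$, which is exactly the defining property of $\hat\nabla$, whence $\hat\nabla u=\nabla u$.

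Honestly I do not expect a genuine obstacle here: the statement is just the observation that a weight locally comparable to $1$ leaves the Sobolev space unchanged, combined with the Meyers--Serrin theorem. The only point deserving a word of care is that $W^{1,p}(\Omega')$ in the statement is the usual distributional-gradient Sobolev space, so one must either cite $H=W$ for $\sigma=1$ (as already done in the paper) or, equivalently, use directly the density of $C^\infty(\Omega')\cap W^{1,p}(\Omega')$ in $W^{1,p}(\Omega')$; note that, in contrast to Theorem~\ref{compactnew}, no boundary regularity of $\Omega'$ is needed.
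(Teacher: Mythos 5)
Your proposal is correct and follows essentially the same route as the paper's proof: two-sided bounds for $\vert\mathbf{E}\vert^2$ on $\overline{\Omega'}$, equivalence of the (weighted and unweighted) norms, identification of the generating sets $\mathcal{V}_{p,\vert\mathbf{E}\vert^2}=\mathcal{V}_{p,1}$, and Meyers--Serrin to conclude $W^{1,p}(\Omega')=H^{1,p}(\Omega';\vert\mathbf{E}\vert^2)$ with $\hat{\nabla}u=\nabla u$. The extra details you supply (openness of $\Omega_0$, validity of Assumption~\ref{weight} via Remark~\ref{weightexamples}~(ii), and the explicit approximation argument identifying the gradients) are consistent with, and merely elaborate on, the paper's argument.
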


	\begin{proof}
          Due to $\vert \bE\vert >0 $ in $\overline{\Omega'}$ and
          ${\vert \bE\vert\in C^0(\overline{\Omega'})}$, there is a
          local constant $c(\Omega')>0$ such that
          $c(\Omega')^{-1}\leq \vert \bE\vert^2\leq
          c(\Omega')$~in~$\overline{\Omega'}$. Thus, we have
          ${L^p(\Omega')=L^p(\Omega';\vert \bE\vert^2)}$~with
          \begin{align*}
            \smash{c(\Omega')^{-\frac{1}{p}}\|u\|_{L^p(\Omega')}\leq
            \|u\|_{L^p(\Omega';\vert \bE\vert^2)}\leq
            c(\Omega')^{\frac{1}{p}}\|u\|_{L^p(\Omega')}} 
          \end{align*}
          for every $u\in L^p(\Omega')=L^p(\Omega';\vert
          \bE\vert^2)$. As a result, we also have
          $\mathcal{V}_{p,\vert \bE\vert^2}=\mathcal{V}_{p,1}$~with
          \begin{align}
            \smash{c(\Omega')^{-\frac{1}{p}}\|u\|_{W^{1,p}(\Omega')}\leq
            \|u\|_{H^{1,p}(\Omega';\vert \bE\vert^2)}\leq
            c(\Omega')^{\frac{1}{p}}\|u\|_{W^{1,p}(\Omega')}}\label{equivalence} 
          \end{align}
          for every
          $u \in \mathcal{V}_{p,\vert
            \bE\vert^2}=\mathcal{V}_{p,1}$. Since $W^{1,p}(\Omega')$,
          by Meyer--Serrin's theorem, is the closure of
          $\mathcal{V}_{p,1}$ and $H^{1,p}(\Omega';\vert\bfE\vert^2)$,
          by definition, is the closure of
          $\mathcal{V}_{p,\vert \bE\vert^2}$, 
          \eqref{equivalence} implies that
          ${W^{1,p}(\Omega')=H^{1,p}(\Omega';\vert\bfE\vert^2)}$ and
          $\smash{\hat{\nabla}} u=\nabla u$ for all
          ${u\in W^{1,p}(\Omega')}$.
	\end{proof}

\section{A weak stability lemma}\label{sec:stab}
The weak stability of problems of $p$--Laplace type is well-known
(cf.~\cite{dms}).~It~also holds for our problem \eqref{NS} if we make
appropriate natural assumptions on the extra stress tensor $\mathbf{S}$ and on the couple stress tensor
$\mathbf{N}$, which are motivated by the canonical example
in~\eqref{eq:SN-ex} for constant shear exponents.~We~denote the symmetric and the skew-symmetric part, resp., of a
tensor $\mathbf{A}\in \mathbb{R}^{d\times d}$ by $\mathbf{A}^{\sym}:=\frac{1}{2}(\mathbf{A}+\mathbf{A}^\top)$
and ${\mathbf{A}^{\anti}:=\frac{1}{2}(\mathbf{A}
-\mathbf{A}^\top)}$.~Moreover,~we~define~${\R^{d\times
	d}_{\sym}:=\{\mathbf{A}\in \R^{d\times d} \fdg
\mathbf{A}=\mathbf{A}^\sym\}}$ and $\R^{d\times
	d}_{\anti}:=\{\mathbf{A}\in \R^{d\times d} \fdg
      {\mathbf{A}=\mathbf{A}^\anti}\}$.
      
\begin{Vss}\label{VssS}
	For the extra stress tensor
	$\mathbf{S}:\R_{\sym}^{d\times d}\times \R_{\anti}^{d\times
          d}\times\R^d\to \setR^{d}$  and some $p \in(1,\infty)$, there
        exist constants $c,C >0$ such that:
	\begin{enumerate}
		\item[{\rm \hypertarget{(S.1)}{(S.1)}}] $\mathbf{S}\in C^0(\R_{\sym}^{d\times d}\times \R_{\anti}^{d\times d}\times \R^d;\setR^{d
			\times d})$.
		\item[{\rm \hypertarget{(S.2)}{(S.2)}}] For every $\bD \in \R_{\sym}^{d\times d}$,
		$\bR \in \R_{\anti}^{d\times d}$ and $\bE \in \setR^d$, it holds\\[-5mm]
		\begin{align*}
				\vert\mathbf{S}^{\sym}(\mathbf{D},\mathbf{R},\mathbf{E})\vert&\leq
				c\,\big (1+\vert\E\vert^2 \big) \big (1+\vert\mathbf{D}\vert^{p-1}\big)\,,
				\\ 
				\vert\mathbf{S}^{\anti}(\mathbf{D},\mathbf{R},\mathbf{E})\vert&\leq
				c\,\vert \mathbf{E}\vert^2 \big (1+\vert\mathbf{R}\vert^{p-1}\big)\,.
		\end{align*}\\[-7mm]
		
		\item[{\rm \hypertarget{(S.3)}{(S.3)}}]  For every $\bD \in \R_{\sym}^{d\times d}$,
		$\bR \in \R_{\anti}^{d\times d}$ and $\bE \in \setR^d$, it holds\\[-5mm]
		\begin{align*}
				\mathbf{S}(\mathbf{D},\mathbf{R},\mathbf{E}):\mathbf{D}
				&\geq  c\,\big (1+\vert\E\vert^2\big )\, \big( \vert\mathbf{D}\vert^p-C\big)\,,
				\\
				\mathbf{S}(\mathbf{D},\mathbf{R},\mathbf{E}):\mathbf{R}&\geq
				c\,\vert \mathbf{E}\vert^2 \big( \vert\mathbf{R}\vert^p-C\big)\,.
		\end{align*}\\[-7mm]
		
		\item[{\rm \hypertarget{(S.4)}{(S.4)}}]  For every $\bD_1, \bD_2 \in
		\R_{\sym}^{d\times d}$, $\bR_1, \bR_2 \in \R_{\anti}^{d\times d}$
		and $\bE \in \setR^d$ with $(\mathbf{D}_1,\vert
		\mathbf{E}\vert \mathbf{R}_1)\neq(\mathbf{D}_2,\vert
		\mathbf{E}\vert \mathbf{R}_2)$, it holds\\[-6mm]
		\begin{align*}
				\big (\mathbf{S}(&\mathbf{D}_1,\mathbf{R}_1,\mathbf{E})-
				\mathbf{S}(\mathbf{D}_2,\mathbf{R}_2,\mathbf{E})\big ):
				\big (\mathbf{D}_1-\mathbf{D}_2+\mathbf{R}_1-\mathbf{R}_2\big )>0\,.
		\end{align*}
	\end{enumerate}
\end{Vss}
\begin{Vss}\label{VssN}
	For the couple stress tensor
	$\mathbf{N}:\R^{d\times d}\times \R^d\to\R^{d\times d}$ and
        some $p \in (1,\infty)$, there
        exist constants $c,C >0$ such that: 
	\begin{enumerate}
		\item[{\rm \hypertarget{(N.1)}{(N.1)}}]$\mathbf{N}\in C^0(\R^{d\times d}\times \R^d;\R^{d\times d})$.
		
		\item[{\rm \hypertarget{(N.2)}{(N.2)}}]  For every $\bL \in \R^{d\times d}$ and $\bE \in
		\setR^d$, it holds\\[-6mm]
		\begin{align*}
			\vert\mathbf{N}(\mathbf{L},\mathbf{E})\vert\leq c\,\big
			\vert\mathbf{E}\vert^2
			\big(1+\vert\mathbf{L}\vert^{p-1}\big )\,.
		\end{align*}\\[-10mm]
		
		\item[{\rm \hypertarget{(N.3)}{(N.3)}}] For every $\bL \in \R^{d\times d}$ and $\bE \in
		\setR^d$, it holds\\[-6mm]
		\begin{align*}
			\mathbf{N}(\mathbf{L},\mathbf{E}):\mathbf{L}\geq
			c\,\big \vert\mathbf{E}\vert^2
			\big(\vert\mathbf{L}\vert^p -C\big)\,.
		\end{align*}\\[-10mm]

              \item[{\rm \hypertarget{(N.4)}{(N.4)}}]  For every $\bL_1,\bL_2 \in \R^{d\times d}$ and $\bE \in
		\setR^d$ with $\vert \mathbf{E}\vert>0$ and
		$\mathbf{L}_1\neq \mathbf{L}_2$, it holds\\[-6mm]
		\begin{align*}
			(\mathbf{N}(\mathbf{L}_1,\mathbf{E})-\mathbf{N}
			(\mathbf{L}_2,\mathbf{E})):(\mathbf{L}_1-\mathbf{L}_2)>0\,.
		\end{align*}
	\end{enumerate}
\end{Vss}

Under these assumptions, the following weak stability of 
our problem \eqref{NS} is valid. 
\begin{Lem}\label{DalMaso3}
  Let $\Omega\subseteq\R^d$,
  $d\ge2$, be a bounded domain,
  let $p> \frac {2d}{d+2}$ and
  let Assumption~\ref{VssS}, Assumption~\ref{VssN}~and Assumption
  ~\ref{VssE} be satisfied. Furthermore, let
  ${(\bv^n)_{n\in \mathbb{N}}\subseteq V_p(\Omega)}$ and
  $(\w^n)_{n\in \mathbb{N}}\subseteq \WpEo$ be~such~that
  \begin{align}
    \begin{aligned}\label{stab-konv}
      \bv^n&\rightharpoonup\bv\quad& &\text{in}\
      V_p(\Omega)&&\quad(n\to \infty)\,,
      \\
      \w^n&\rightharpoonup\w\quad& &\text{in}\ \WpEo&&\quad(n\to \infty) \,.
    \end{aligned}
  \end{align}      
  For a ball $B\subset \Omega_0$ such that
  $B':=2B\subset \subset \Omega_0$ and  $\tau\in C_0^\infty(B')$
  satisfying ${\chi_B\leq \tau\le\chi_{B'}}$ we set
  $ \mathbf{u}^n:=(\mathbf{v}^n-\mathbf{v})\tau\in W^{1,p}_0(B')$,
  ${\bfpsi^n:=(\boldsymbol\omega^n-\boldsymbol\omega)\tau \in
  W^{1,p}_0(B')}$, $n\in \mathbb{N}$.  Let
  $ \mathbf{u}^{n,j}\in W^{1,\infty}_0(B')$, $n,j\in \mathbb{N}$, and
  $\bfpsi^{n,j}\in W^{1,\infty}_0(B')$, $n,j\in \mathbb{N}$, resp.,
  denote the Lipschitz truncations constructed according to
  Theorem~\ref{thm:Lt}.~Moreover, assume that for every ${j\in \mathbb{N}}$, we
  have that
  \begin{align} 
    \limsup_{n\to\infty}\big\vert\big\langle
    &\Sn-\Ss,  \D\bu^{n,j}+\Rr(\bu^{n,j},\bfpsi^{n,j})\big\rangle \notag
    \\
    &\quad + \big\langle\Nn-\N,\nabla\bfpsi^{n,j}\big\rangle
      \big \vert \le
      \delta_j\,, \label{mon4}
  \end{align}
  where $\delta_j\to 0$ $(j\to \infty)$. Then, one has that
  $ \nabla\bv^n\to\nabla\bv$ a.e.~in $B$~${(n\to \infty)}$,
  ${\nabla\w^n\to\nabla\w}$ a.e.~in $B$ $(n\to \infty)$ and
  $\w^n\to\w$ a.e.~in $B$ $(n\to \infty)$~for~suitable subsequences.
\end{Lem}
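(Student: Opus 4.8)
The strategy is the standard "div-curl / weak monotonicity via Lipschitz truncation" argument, adapted to the weighted setting. The hypothesis \eqref{mon4} is exactly the term that is hard to control directly in the equations (it is where the convective terms and the coupling $\bfvarepsilon:\bS$ would otherwise obstruct monotonicity testing), so I may assume it. The first step is to fix the ball $B$ and the cut-off $\tau$, and to observe that on $B'\subset\subset\Omega_0$ Lemma~\ref{hatgrad} gives $W^{1,p}(B')=H^{1,p}(B';\abs{\bE}^2)$ with equivalent norms; hence $\bu^n\weakto\bfzero$ and $\bfpsi^n\weakto\bfzero$ in $W^{1,p}_0(B')$ by \eqref{stab-konv}, and the weight $\abs{\bE}^2$ is harmless (bounded above and below) on $B'$. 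This lets me apply Theorem~\ref{thm:Lt} to both sequences, producing $\bu^{n,j}$, $\bfpsi^{n,j}$ with the four estimates in \eqref{eq:C18}; crucially $\nabla\bu^{n,j}\weakto\bfzero$ and $\nabla\bfpsi^{n,j}\weakto\bfzero$ in every $L^s(B')$, and $\norm{\bu^{n,j}}_\infty,\norm{\bfpsi^{n,j}}_\infty\to 0$ uniformly in $j$.

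Next I introduce, for fixed $j$, the nonnegative quantity
$$
\theta_n^j:=\Big(\big(\Sn-\Ss\big):\big(\D\bu^{n,j}+\Rr(\bu^{n,j},\bfpsi^{n,j})\big)+\big(\Nn-\N\big):\nabla\bfpsi^{n,j}\Big)^{\vartheta}
$$
for a small $\vartheta\in(0,1)$, integrated over $B$ (where $\tau=1$, so $\D\bu^{n,j}=\D\bv^n$ etc.\ on the set $\{\bu^{n,j}=\bu^n\}\cap\{\bfpsi^{n,j}=\bfpsi^n\}$). On the "good set" $G_{n,j}:=\{\bu^{n,j}=\bu^n\}\cap\{\bfpsi^{n,j}=\bfpsi^n\}$ the integrand is, by \hyperlink{(S.4)}{(S.4)} and \hyperlink{(N.4)}{(N.4)}, the genuine monotone combination evaluated at $(\D\bv^n,\bR(\bv^n,\w^n))$ versus $(\D\bv,\bR(\bv,\w))$ — hence $\ge 0$, and $=0$ only where $\D\bv^n\to\D\bv$, $\abs{\bE}\bR(\bv^n,\w^n)\to\abs{\bE}\bR(\bv,\w)$ and $\nabla\w^n\to\nabla\w$. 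I estimate $\int_B\theta_n^j\,dx$ by splitting into $G_{n,j}$ and its complement. On the complement, the growth bounds \hyperlink{(S.2)}{(S.2)}, \hyperlink{(N.2)}{(N.2)} (with $\bE\in L^\infty$, so the $\abs{\bE}^2$ factors are bounded on $B'$) together with the uniform $W^{1,p}$-bounds on $\bv^n,\w^n$ control $\Sn,\Nn$ in $L^{p'}(B')$; then Hölder with exponent $1/\vartheta$ small enough gives a bound by $c\,\vert B'\setminus G_{n,j}\vert^{1-\vartheta p'\cdot(\text{something})}$, which via \eqref{eq:C18}$_4$ tends (as $n\to\infty$) to $c\,2^{-j\cdot\text{const}}$. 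On the good set I use that $\int_B$ of the genuine combination equals $\langle\Sn-\Ss,\D\bu^{n,j}+\Rr(\bu^{n,j},\bfpsi^{n,j})\rangle+\langle\Nn-\N,\nabla\bfpsi^{n,j}\rangle$ minus the contribution over $B'\setminus G_{n,j}$ (again controlled as above) minus the contribution over $B'\setminus B$ where $\nabla\bu^{n,j}=\nabla((\bv^n-\bv)\tau)$ involves $\nabla\tau$ and lower-order terms that vanish because $\bv^n-\bv\to 0$ strongly in $L^p(B')$ (Rellich) and $\nabla\bu^{n,j}\weakto 0$; the first bracket is exactly what \eqref{mon4} bounds by $\delta_j$. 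Putting the pieces together: $\limsup_{n\to\infty}\int_B\theta_n^j\,dx\le c(\delta_j+2^{-j\gamma})=:\tilde\delta_j$ with $\tilde\delta_j\to 0$.

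Finally I run the standard diagonal/Fatou extraction. Writing $A_n:=\big(\Sn-\Ss\big):\big(\D\bv^n-\D\bv+\bR(\bv^n,\w^n)-\bR(\bv,\w)\big)+\big(\Nn-\N\big):(\nabla\w^n-\nabla\w)\ge 0$ on $\Omega_0$, the bound above yields (after a Chebyshev argument over the sets $G_{n,j}$, whose complements have measure $\to 0$) that $\int_B A_n^{\vartheta}\,dx\to 0$ as $n\to\infty$, hence $A_n\to 0$ in measure on $B$, hence $A_n\to 0$ a.e.\ along a subsequence. Then I invoke the pointwise consequence of strict monotonicity: at a.e.\ $x\in B\subseteq\Omega_0$ one has $\abs{\bE(x)}>0$, and a Young-measure / elementary "coercivity + strict monotonicity" argument (exactly as in \cite{dms}, using \hyperlink{(S.3)}{(S.3)}, \hyperlink{(N.3)}{(N.3)} for equi-integrability and \hyperlink{(S.4)}{(S.4)}, \hyperlink{(N.4)}{(N.4)} for the rigidity) forces $\D\bv^n(x)\to\D\bv(x)$, $\bR(\bv^n,\w^n)(x)\to\bR(\bv,\w)(x)$ and $\nabla\w^n(x)\to\nabla\w(x)$. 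Using Korn's inequality to recover $\nabla\bv^n\to\nabla\bv$ from $\D\bv^n\to\D\bv$ together with $\bv^n\to\bv$ in $L^p$, and recovering $\w^n\to\w$ a.e.\ from $\nabla\w^n\to\nabla\w$ via \Poincare, completes the proof. The main obstacle is the careful bookkeeping of the "bad set" contributions: one must choose $\vartheta$ small enough that the Hölder exponents work with the (only $L^{p'}$, via the growth conditions) bounds on $\Sn,\Nn$, while simultaneously the factor $\vert B'\setminus G_{n,j}\vert$ supplies a negative power of $2^j$ after passing $n\to\infty$ using \eqref{eq:C18}$_4$; everything else is routine.
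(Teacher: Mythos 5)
Your plan follows essentially the same route as the paper's proof: localize to $B'\subset\subset\Omega_0$, where Lemma~\ref{hatgrad} makes the weight harmless and Rellich gives strong $L^q(B')$ and a.e.\ convergence; raise the monotone quantity to a power $\theta\in(0,1)$ and integrate over $B$; split along the coincidence sets of the Lipschitz truncations, controlling the truncated pairing by the hypothesis \eqref{mon4}, the bad sets by \eqref{eq:C18}, and the cut-off errors by the strong convergences; conclude that the monotone quantity tends to $0$ a.e.\ in $B$ along a subsequence and finish with the Dal Maso--Murat rigidity argument \cite{DalMaso2}, \cite{dms}. Two remarks on the bookkeeping: the quantity $\theta_n^j$ you integrate is \emph{not} nonnegative off the coincidence sets (there $\D\bu^{n,j}+\Rr(\bu^{n,j},\bfpsi^{n,j})$ bears no relation to the genuine differences), so the fractional power is not even defined there; the paper instead exponentiates the genuine nonnegative integrand and only converts to the truncated test functions on the coincidence set, while its bad-set part is estimated through the uniform $L^1$-bound \eqref{eq:i1est}. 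Your variant (truncations everywhere, absolute values understood) also closes, using \eqref{eq:C18}$_3$--$_4$, so this is a fixable presentational slip rather than a structural difference. You should, however, also track the cross-terms caused by the mismatch of the two coincidence sets $\{\bu^n=\bu^{n,j}\}$ and $\{\bfpsi^n=\bfpsi^{n,j}\}$, i.e.\ the skew part of the stress paired with $\bfvarepsilon\cdot\bfpsi^{n,j}$ resp.\ $\bfvarepsilon\cdot(\w^n-\w)\tau$; these are exactly the paper's terms $I_8^{n,j}$, $I_9^{n,j}$ and are killed by $\sup_{j}\|\bfpsi^{n,j}\|_\infty\to0$ from \eqref{eq:C18}$_1$ and by $\w^n\to\w$ in $L^p(B')$.

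The one step that is genuinely wrong as written is your final sentence. Korn's and \Poincare's inequalities are integral inequalities and carry no pointwise information: they cannot upgrade $\D\bv^n\to\D\bv$ a.e.\ to $\nabla\bv^n\to\nabla\bv$ a.e., nor $\nabla\w^n\to\nabla\w$ a.e.\ to $\w^n\to\w$ a.e. The correct (and immediate) conclusion, which is also how the paper argues, is: $\w^n\to\w$ a.e.\ in $B$ comes directly from Rellich (cf.\ \eqref{konvergenz-stab}), which you in any case need for the cut-off error terms; and since the monotonicity argument also gives $\Rr(\bv^n,\w^n)\to\Rr(\bv,\w)$ a.e.\ in $B$ (recall $|\bE|>0$ there), the identity $(\nabla\bv^n)^\anti=\Rr(\bv^n,\w^n)-\bfvarepsilon:\w^n$ together with $\w^n\to\w$ a.e.\ yields a.e.\ convergence of the skew parts, which combined with $\D\bv^n\to\D\bv$ a.e.\ gives $\nabla\bv^n\to\nabla\bv$ a.e.\ in $B$. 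With that repair, your argument coincides in substance with the paper's proof.
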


\begin{Bem}\label{rem:hatgrad}
  For each ball $B'\subset\subset\Omega_0$, Lemma \ref{hatgrad} shows
  that
  $W^{1,p}(B')=H^{1,p}(B';\vert\bfE\vert^2)$. Hence, for any
  $\w \hspace*{-0.05em}\in\hspace*{-0.05em}
  H^{1,p}(\Omega;\vert\bfE\vert^2)$, it holds
  $\w|_{B'}\hspace*{-0.05em}\in\hspace*{-0.05em} W^{1,p}(B')$~with
  ${\nabla(\w|_{B'})=(\hat{\nabla}\w)|_{B'}}$ for each ball
  $B'\subset\subset\Omega_0$. 
	 In precisely that sense, the gradients of  $\w\in \WpEo$ and ${(\w^n)_{n\in \mathbb{N}}\subseteq \WpEo}$ are to be understood~in~\eqref{mon4}.
\end{Bem}

\begin{Bew}
	Since $W^{1,p}(B')=H^{1,p}(B',\vert\bfE\vert^2)$ with norm equivalence (cf.~Lemma~\ref{hatgrad}), from $\eqref{stab-konv}_2$ and resorting to Rellich's compactness theorem, we deduce that
	\begin{align}
		\begin{aligned}\label{konvergenz-stab}
			\bv^n&\to\bv\quad& &\text{in}\  L^q(B')\text{ and a.e.~in }
			B' &&\quad(n\to \infty)\,,
			\\
			\w^n&\to\w\quad& &\text{in}\ L^q(B') \text{ and a.e.~in }
			B'&&\quad(n\to \infty)\,,
		\end{aligned}
	\end{align}
	for any $q\in \left[1,p^*\right)$.
	Throughout the proof, we will employ the~particular~notation
	\begin{align}
		\begin{aligned}\label{eq:not}
			\widetilde\bS&:=\Ss\,, \quad &&\mathbf{S}^n\,:=\Sn\,,
			\\
			\widetilde\bN&:=\N\,, \quad &&\mathbf{N}^n:=\Nn\,.
		\end{aligned}
	\end{align}
Using (\hyperlink{(S.2)}{S.2}), (\hyperlink{(N.2)}{N.2}),
Assumption \ref{VssE} and  \eqref{stab-konv}, we see that there
exists a constant ${K:=K(\|\bE\|_\infty)>0}$ (not depending on $n\in \mathbb{N}$) such that 
\begin{equation}
  \label{stab-est}
  \begin{aligned}
    \Vert\bv^n\|_{1,p}+\|\bv \Vert_{1,p} +\Vert\w^n\|_{1,
      p,\abs{\bE}^2}+\|\w\Vert_{1, p,\abs{\bE}^2}&\le K\,,
    \\
    \Vert\mathbf{S}^n\Vert_{p'}  +  \Vert\widetilde{\mathbf{S}}\Vert_{p'}+  \Vert
    (\mathbf{S}^n)^\anti\Vert_{\smash{p',\vert
    		\bfE\vert^{\smash{\frac{-2}{p-1}}}}}+\Vert
    \widetilde{\mathbf{S}}^\anti\Vert_{\smash{p',\vert
      \bfE\vert^{\smash{\frac{-2}{p-1}}}}} &\le K\,,
    \\
\Vert\mathbf{N}^n\Vert_{\smash{p',\vert
		\bfE\vert^{\smash{\frac{-2}{p-1}}}}}+\Vert\widetilde {\mathbf{N}}\Vert_{\smash{p',\vert
      	\bfE\vert^{\smash{\frac{-2}{p-1}}}}}&\le
    K\,.
  \end{aligned}
\end{equation}
	Recall that $\tau\in C_0^\infty(B')$ with
	$\chi_B \le \tau\leq \chi_{B'}$. Hence, using (\hyperlink{(S.4)}{S.4}) and~(\hyperlink{(N.4)}{N.4}),~we~get
	\begin{align}
		&I^n\!:=\!\int_B\! \big [\!\big (\bS^n\!-\!\widetilde \bS\big )\!:\! \big
		(\D(\bv^n\!-\!\bv)\!+\!\Rr(\bv^n\!-\!\bv,\w^n\!-\!\w) \big)
		\!+\!\big(\bN^n\!-\!\widetilde \bN\big )\!:\!\nabla(\w^n\!-\! \w)\big]^\theta\, dx \notag
		\\
		&\le\!\int_{B'}\!\big [\big (\bS^n\!-\!\widetilde \bS\big )\!:\!
		\big (\D(\bv^n\!-\!\bv)\!+\!\Rr(\bv^n\!-\!\bv,\w^n\!-\!\w) \big)
		\!+\!\big(\bN^n\!-\!\widetilde \bN\big )\!:\!\nabla(\w^n\!-\! \w) \big]^\theta \!\tau ^\theta \,dx \notag
		\\
		&\le\int_{B'}\big [\big (\bS^n-\widetilde \bS\big ):
		\big (\D(\bv^n-\bv)+\Rr(\bv^n-\bv,\w^n-\w) \big)
		\tau \big]^\theta \, dx \label{eq:I}
		\\
		&\quad +\int_{B'}\big [\big(\bN^n-\widetilde \bN\big
		):\nabla(\w^n- \w) \tau \big]^\theta 
		dx =:\int_{B'} \alpha_n^\theta  \,dx +\int_{B'}
		\beta_n^\theta \, dx \,,\notag 
	\end{align}
	where we also used that
        \begin{gather}
          \smash{\frac{1}{2} }(a^\theta+ b^\theta ) \le (a+ b )^\theta \le
          a^\theta+ b^\theta\label{eq:1}
        \end{gather}
        valid for all $a,b\ge 0$~and~${\theta \in (0,1)}$. Then,
	splitting the integral of $\alpha_n^\theta$~over~$B'$~into an
	integral over $\{\mathbf{u}^n\neq\mathbf{u}^{n,j}\} $ and one over
	$\{\mathbf{u}^n=\mathbf{u}^{n,j}\} $, also using~H\"older's~inequality 
	with exponents $\frac{1}{\theta}$ and $\frac{1}{1-\theta}$, 
	we~find~that
	\begin{align}
		\int_{B'} \alpha_n^\theta \, dx  &\le \| \alpha_n
		\|_{L^1( B')}^\theta
		\vert\{\mathbf{u}^n\neq\mathbf{u}^{n,j}\}\vert^{1-\theta} + \|
		\alpha_n \chi_{\{\mathbf{u}^n=\mathbf{u}^{n,j}\} }
		\|_{L^1(B')}^\theta\vert B'\vert^{1-\theta}\notag
		\\
		&=:(I_{1}^n)^{\theta} \vert\{\mathbf{u}^n\neq\mathbf{u}^{n,j}\}\vert^{1-\theta}
           + \|
		\alpha_n \chi_{\{\mathbf{u}^n=\mathbf{u}^{n,j}\} }
		\|_{L^1(B')}^\theta\vert B'\vert^{1-\theta} 
           \,.\label{eq:i1}
	\end{align}
	For the first term, we will use  \eqref{eq:C18}$_4$ and, thus, have to
	show~that~$(I_1^n)_{n\in \mathbb{N}}\subseteq \mathbb{R}$~is bounded. To this end, we use that for vector
	fields $\bu$, $\bw$ and tensor~fields~$\bA$  there holds
	\begin{align}\label{eq:symm}
		\bA: \bD\bu +\bA: \bR(\bu,\bw) = \bA: \nabla \bu +
		\bA^\anti:(\bfepsilon\cdot \bw )\,.
	\end{align}
	Then,  combining \eqref{stab-konv},
	\eqref{stab-est}, \eqref{eq:symm} and  that $\tau \le 1$ in $\Omega$, we observe that
	\begin{align}
          \begin{aligned}
            I_{1}^n &\le \big (\Vert\bS^n\Vert_{p'} + \Vert\widetilde
            \bS\Vert_{p'} \big ) \Vert\nabla \bv^n\!-\!\nabla \bv
            \Vert_{p}
            \\
            &\quad + \big (\Vert(\bS^n)^\anti\Vert_{\smash{p',\vert
            		\bfE\vert^{\smash{\frac{-2}{p-1}}}}}+ \Vert \widetilde
            \bS^\anti\Vert_{\smash{p',\vert \bfE\vert^{\smash{\frac{-2}{p-1}}}}} \big )
            \Vert\w^n\!-\!\w\Vert_{ p,\abs{\bE}^2} \\& \le 2\,K^2\,.
          \end{aligned}\label{eq:i1est}
	\end{align}
	Similarly, we deduce that
	\begin{align}
		\int_{B'} \beta_n^\theta \, dx  &\le \| \beta_n
		\|_{L^1( B')}^\theta
		\vert\{\bfpsi^n\neq\bfpsi^{n,j}\}\vert^{1-\theta}\notag
		+ \|\beta_n \chi_{\{\bfpsi^n=\bfpsi^{n,j}\}  }
		\|_{L^1(B')}^\theta\vert B'\vert^{1-\theta}
		\\
		&=:(I_{2}^n)^{\theta} \vert\{\bfpsi^n\neq\bfpsi^{n,j}\}\vert^{1-\theta}
		+ \|\beta_n \chi_{\{\bfpsi^n=\bfpsi^{n,j}\}  }
           \|_{L^1(B')}^\theta\vert B'\vert^{1-\theta}
           \,,\label{eq:i3}
	\end{align}
	and that
	\begin{align}
          \begin{aligned}
            I_{2}^n &\le \big (\Vert\bN^n\Vert_{\smash{p',\vert \bfE\vert^{\smash{\frac{-2}{p-1}}}}} +
            \Vert\widetilde \bN\Vert_{\smash{p',\vert \bfE\vert^{\smash{\frac{-2}{p-1}}}}}
            \big )\Vert\nabla \w^n-\nabla \w \Vert_{p,\abs{\bE}^2}
          \le K^2\,.
          \end{aligned}\label{eq:i3est}
	\end{align}
	Using \eqref{eq:i1}, \eqref{eq:i1est}--\eqref{eq:i3est} and \eqref{eq:1}
	we, thus, conclude that
	\begin{align}
		\begin{aligned}
            &\int_{B'} \alpha_n^\theta \, dx +\int_{B'} \beta_n^\theta \, dx
          \\
            &\le 2^\theta K^{2\theta} \big
            (\vert\{\bfu^n\neq\bfu^{n,j}\}\vert^{1-\theta}
            +\vert\{\bfpsi^n\neq\bfpsi^{n,j}\}\vert^{1-\theta} \big )
             \\
            &\quad + 2\,\vert B'\vert^{1-\theta} \, 
            \bigg(\int_{B'}{\alpha_n \chi_{\{\bfu^n=\bfu^{n,j}\}}\,dx}
            +\int_{B'}{\beta_n\,\chi_{\{\bfpsi^n=\bfpsi^{n,j}\}}\,dx}
            \bigg)^{\smash{\theta}}\,.
          \end{aligned}\label{eq:est-ab}
	\end{align}
	Let us now treat the last two integrals, which we denote by
	$I_3^{n,j}$ and $I_4^{n,j}$.  We~have $ \nabla (\bfv^n -\bfv) \tau = \nabla\bu^{n,j}-
	(\bv^n-\bv) \otimes \nabla \tau $ on $\{\mathbf{u}^n=\mathbf{u}^{n,j}\}
	$, which, using~\eqref{eq:symm},~implies
	\begin{align}
	\begin{aligned}
          I_3^{n,j}&= \bigskp{ \bfS^n- \widetilde \bfS}{\big(\nabla
			\bfu^{n,j}  -(
			\bfv^{n}-\bfv)\otimes \nabla \tau \big)
			\chi_{\{\mathbf{u}^n=\mathbf{u}^{n,j}\}}}
		\\[-0.5mm]
		&\quad +\bigskp{ \big(\bfS^n- \widetilde \bfS\big )^\anti
		}{\bfepsilon \cdot \bfpsi^{n,j} \chi_{\{\bfpsi^n=\bfpsi^{n,j}\}
		}}
		\\[-0.5mm]
		&\quad +\bigskp{ \big(\bfS^n- \widetilde \bfS\big )^\anti
		}{\bfepsilon \cdot (\w^n -\w)\tau 
			\chi_{\{\mathbf{u}^n=\mathbf{u}^{n,j}\}\cap
				\{\bfpsi^n\neq\bfpsi^{n,j}\} }}
		\\[-0.5mm]
		&\quad -\bigskp{ \big(\bfS^n- \widetilde \bfS\big )^\anti
		}{\bfepsilon \cdot (\w^n-\w)\tau 
			\chi_{\{\mathbf{u}^n\neq\mathbf{u}^{n,j}\}\cap
				\{\bfpsi^n=\bfpsi^{n,j}\}}}\,.
				\end{aligned}\label{eq:I5}
	\end{align}
	From $ \nabla (\w^n
	-\w) \tau \hspace*{-0.1em}=\hspace*{-0.1em} \nabla\bfpsi^{n,j}\hspace*{-0.1em}-\hspace*{-0.1em} (\w^n-\w) \otimes \nabla \tau $ on $\{\bfpsi^n\hspace*{-0.1em}=\hspace*{-0.1em}\bfpsi^{n,j}\} $, it
	follows~that
	\begin{align}
		\begin{aligned}
		I_4^{n,j}= \bigskp{ \bfN^n- \widetilde \bfN}{\big(\nabla \bfpsi^{n,j}-(
				\w^{n}-\w)\otimes \nabla \tau 
			\big)	\chi_{\{\bfpsi^n=\bfpsi^{n,j}\} }}\,.
		\end{aligned}\label{eq:I6}
	\end{align}
	Using \eqref{eq:symm} and adding  appropriate terms,
	we deduce from \eqref{eq:I5} and~\eqref{eq:I6}~that
	\begin{align}
		I^{n,j}_{3} \!+\! I^{n,j}_{4} &\le \bigabs{\bigskp{ \bfS^n-  \widetilde \bfS}{\big (\bfD
				\bfu^{n,j} + \Rr(\bu^{n,j},\bpsi^{n,j}) \big) }+\bigskp{
				\bfN^n- \widetilde \bfN}{\nabla \bfpsi^{n,j} } } \notag 
		\\
		&\quad +\bigabs{\bigskp{ \bfS^n- \widetilde \bfS}{\!\nabla
				\bfu^{n,j}\chi_{\{\mathbf{u}^n\neq\mathbf{u}^{n,j}\}}}} +
		\bigabs{\bigskp{ \bfN^n- \widetilde \bfN}{\nabla
				\bfpsi^{n,j} \chi_{\{\bfpsi^n\neq \bfpsi^{n,j}\} }} } \notag 
		\\
		& \quad + \bigabs{\bigskp{ \big(\bfS^n\!-\! \widetilde \bfS\big
				)^\anti }{\bfepsilon \cdot \bfpsi^{n,j}
				\chi_{\{\bfpsi^n\neq\bfpsi^{n,j}\} }}} \!+\! \bigskp{\bigabs{
				\big(\bfS^n\!-\! \widetilde \bfS\big )^\anti }}{\bigabs{
				\w^n\!-\!\w}\tau } \notag 
		\\
		&\quad +\bigskp {\bigabs{ \bfS^n-\widetilde \bfS}}{\bigabs{(
				\bfv^{n}-\bfv)\otimes \nabla \tau }} +\bigskp {\bigabs{
				\bfN^n-\widetilde \bfN}}{\bigabs{( \w^{n}-\w)\otimes
				\nabla \tau }} \notag 
		\\[-1mm]
		&=: 
		{\sum}_{k=5}^{11}I_k^{n,j}\,. 
		\label{eq:i56}
	\end{align}
	The term $I_5^{n,j}$, i.e., the first line on the right-hand side in
        \eqref{eq:i56}, is handled by \eqref{mon4}. For the other
        terms we~obtain,  using H\"older's inequality~and~\eqref{stab-est}, that
	\begin{align}
		I_6^{n,j}&\le \big (\Vert\bS^n\Vert_{p'} + \Vert\widetilde
		\bS\Vert_{p'}
		\big ) \Vert \nabla
		\bfu^{n,j}\chi_{\{\mathbf{u}^n\neq\mathbf{u}^{n,j}\}} \Vert_{
			L^p(B')} \notag
		\\
		&\le K\,\Vert
		\nabla \bfu^{n,j}\chi_{\{\mathbf{u}^n\neq\mathbf{u}^{n,j}\}}
		\Vert_{ L^p(B')} \,,\label{eq:i8}\\[1mm]
	I_7^{n,j}&\le \big (\Vert\bN^n\Vert_{\smash{p',\vert \bfE\vert^{\smash{\frac{-2}{p-1}}}}} + \Vert\widetilde \bN\Vert_{\smash{p',\vert \bfE\vert^{\smash{\frac{-2}{p-1}}}}} \big ) \Vert \nabla
	\bfpsi^{n,j}\chi_{\{\bfpsi^n\neq\bfpsi^{n,j}\}} \Vert_{
		L^p(B';\abs{\bE}^2)} \notag
	\\[-1mm]
	&\le  K\,	\|\bfE\|_{\infty}^{\frac{2}{p}}\Vert \nabla
	\bfpsi^{n,j}\chi_{\{\bfpsi^n\neq\bfpsi^{n,j}\}} \Vert_{
   L^p(B')} \,,\label{eq:i9}
          \\[0.5mm]
	I_{8}^{n,j}&\le \big (\Vert(\bS^n)^\anti\Vert_{
		\smash{p',\vert \bfE\vert^{\smash{\frac{-2}{p-1}}}}} + \Vert\widetilde
                      \bS\,^\anti\Vert_{\smash{p',\vert \bfE\vert^{\smash{\frac{-2}{p-1}}}}} \big )
                      \norm{\bE}^{\frac
                      2p}_{\infty}\abs{\Omega}^{\frac 1p} \Vert 
	\bfpsi^{n,j}\Vert_{L^\infty(B')}\notag
	\\[-1mm]
	&\le K \, \norm{\bE}^{\frac 2p}_{\infty}\abs{\Omega}^{\frac 1p} 
   \Vert \bfpsi^{n,j} \Vert_{ L^\infty(B')} \,,\label{eq:i10}
          \\[0.5mm]
		I_{9}^{n,j}&\le\big (\Vert(\bS^n)^\anti\Vert_{\smash{p',\vert \bfE\vert^{\smash{\frac{-2}{p-1}}}}} + \Vert\widetilde
	\bS\,^\anti\Vert_{\smash{p',\vert \bfE\vert^{\smash{\frac{-2}{p-1}}}}} \big )
	\Vert\w^n-\w \Vert_{L^p(B';\abs{\bE}^2)} \notag 
	\\[-1mm]
	&\le  K\,\|\bfE\|_{\infty}^{\frac{2}{p}}\Vert\w^n-\w \Vert_{
   L^p(B')} \,,\label{eq:i11}
          \\[0.5mm]
		I_{10}^{n,j}&\le\big (\Vert\bS^n\Vert_{p'} + \Vert\widetilde
	\bS\Vert_{p'}
	\big )\norm{\nabla \tau}_\infty\,\Vert\bv^n- \bv
	\Vert_{L^p(B')}\notag
	\\
	&\le  K\,\norm{\nabla \tau}_\infty\Vert\bv^n-\bv
   \Vert_{L^p(B')} \,,\label{eq:i12}
          \\[0.5mm]
	I_{11}^{n,j}&\le\big (\Vert\bN^n\Vert_{\smash{p',\vert \bfE\vert^{\smash{\frac{-2}{p-1}}}}} + \Vert\widetilde \bN\Vert_{\smash{p',\vert \bfE\vert^{\smash{\frac{-2}{p-1}}}}} \big ) \norm{\nabla
		\tau}_\infty\,\Vert\w^n-  \w \Vert_{
		L^p(B';\abs{\bE}^2)}\notag 
	\\[-1mm]
	&\le  K\,\norm{\nabla
		\tau}_\infty\,\|\bfE\|_{\infty}^{\frac{2}{p}}\Vert\w^n- \w
	\Vert_{ L^p(B')} \,.\label{eq:i13}
	\end{align}
	With \eqref{eq:C18}, \!\eqref{stab-konv}--
	\eqref{konvergenz-stab}
	and $1\hspace*{-0.2em}\le\hspace*{-0.2em} \lambda_{n,j}^p$, we get from \eqref{eq:I},\!
	\eqref{eq:est-ab}--\eqref{eq:i13}~for~all~${j\hspace*{-0.2em} \in\hspace*{-0.2em} \setN}$
	\begin{equation*}
                 \smash{ \limsup_{n\to \infty} I^n\le c\,\delta_j^\theta + c\,
                  K^{2\theta} \,2^{-j(1-\theta)}+ c\,(1+
                  \norm{\bE}_{\infty}^{\frac{2}{p}}\,)^{\theta}
                  K^{{\theta}} 2^{\frac{-j}p\theta}\,.}
	\end{equation*}
	Since $\lim_{j\to \infty} \delta_j  =0$, we observe that $I^n\to 0$ $(n\to \infty)$, which,~owing~to~${\theta \in (0,1)}$, (\hyperlink{(S.4)}{S.4}) and (\hyperlink{(N.4)}{N.4}),  implies for a suitable
	subsequence that
	\begin{align*}
		\big (\bS^n-\widetilde \bS\big ): \big
		(\D(\bv^n-\bv)+\Rr(\bv^n-\bv,\w^n-\w) \big) &\to 0
		\qquad \textrm{ a.e.~in }B&&\quad(n\to \infty)\,,
		\\
		\big(\bN^n-\widetilde \bN\big )\!:\!\big
		(\nabla\w^n-\nabla \w\big ) &\to 0 \qquad \textrm{ a.e.~in
		}B&&\quad(n\to \infty)\,.
	\end{align*}
	In view of \eqref{konvergenz-stab}, we also know that $\w ^n \to \w$
	a.e.~in~$B$~and,~hence,~we~can~conclude the assertion of
	Lemma \ref{DalMaso3} as in the proof of \cite[Lem.~6]{DalMaso2}.
\end{Bew}

\begin{Kor}\label{cor:ae-conv}
  Let the assumptions of Lemma \ref{DalMaso3} be satisfied for all
  balls ${B\subset \subset \Omega_0}$ with
  ${B':=2B \subset \subset \Omega_0}$.
  Then, we have for suitable subsequences that
  $\nabla \bv^n \to \nabla \bv$ a.e.~in $\Omega$ $(n\to \infty)$,
  $ \hat\nabla \w^n \to \hat\nabla \w$ a.e.~in $\Omega$
  $(n\to \infty)$ and $\w^n \to \,\w $ a.e.~in $\Omega$
  $(n\to \infty)$.
\end{Kor}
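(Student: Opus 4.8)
The plan is to upgrade the ball-by-ball conclusion of Lemma~\ref{DalMaso3} to a statement on all of $\Omega$ by exhausting $\Omega_0$ with countably many balls and then diagonalising.

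First I would cover $\Omega_0$ by suitable balls: for each $x\in\Omega_0$ there is a ball $B_x$ centred at $x$ with $2B_x\subset\subset\Omega_0$, and since $\R^d$ is second countable, countably many of these, say $(B_k)_{k\in\Nat}$ with $B_k':=2B_k\subset\subset\Omega_0$, already satisfy $\bigcup_{k\in\Nat}B_k=\Omega_0$. By assumption the hypotheses of Lemma~\ref{DalMaso3} --- in particular \eqref{mon4} for the Lipschitz truncations associated with $B_k'$ --- hold on each $B_k$, so the lemma yields, on every $B_k$, the a.e.~convergences $\nabla\bv^n\to\nabla\bv$, $\nabla\w^n\to\nabla\w$ and $\w^n\to\w$ along some subsequence.

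Next I would run a Cantor diagonal argument in $k$: extract a subsequence realising these convergences on $B_1$, then --- noting that the hypotheses of Lemma~\ref{DalMaso3} are $\limsup$/weak-convergence conditions and hence inherited by any subsequence --- pass to a further subsequence adapted to $B_2$, and so on; the resulting diagonal subsequence gives $\nabla\bv^n\to\nabla\bv$, $\nabla\w^n\to\nabla\w$ and $\w^n\to\w$ a.e.~in $B_k$ for every $k\in\Nat$ simultaneously. Since a countable union of Lebesgue null sets is null, these convergences hold a.e.~in $\bigcup_{k\in\Nat}B_k=\Omega_0$, and because $\vert\Omega\setminus\Omega_0\vert=0$ by Assumption~\ref{VssE}, a.e.~in $\Omega$.

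It remains to read the $\w$-convergence in terms of $\hat\nabla$. By Remark~\ref{rem:hatgrad} (via Lemma~\ref{hatgrad}), on each $B_k'\subset\subset\Omega_0$ one has $W^{1,p}(B_k')=H^{1,p}(B_k';\vert\bfE\vert^2)$ and the distributional gradient on $B_k'$ coincides with the restriction of $\hat\nabla$; hence the symbols $\nabla\w^n$, $\nabla\w$ occurring in Lemma~\ref{DalMaso3}, when restricted to $B_k$, equal $(\hat\nabla\w^n)|_{B_k}$, $(\hat\nabla\w)|_{B_k}$, so the diagonal subsequence already satisfies $\hat\nabla\w^n\to\hat\nabla\w$ a.e.~in each $B_k$ and therefore a.e.~in $\Omega$. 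I do not expect a genuine obstacle here: the only points needing care are that the single diagonal subsequence handles all three convergences on all balls at once, and the bookkeeping of $\nabla$ versus $\hat\nabla$ via Remark~\ref{rem:hatgrad}, so that the final ``a.e.~in $\Omega$'' statements are mutually consistent.
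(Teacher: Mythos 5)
Your proposal is correct and follows essentially the same route as the paper: a countable cover of $\Omega_0$ by balls $B_k$ with $2B_k\subset\subset\Omega_0$ (the paper takes rational centers, you invoke second countability), the standard diagonalization over $k$, the fact that $\vert\Omega\setminus\Omega_0\vert=0$, and the identification $(\hat\nabla\w)|_{B_k'}=\nabla(\w|_{B_k'})$ via Remark~\ref{rem:hatgrad}. Your explicit remark that the hypotheses of Lemma~\ref{DalMaso3} pass to subsequences, so the diagonal extraction is legitimate, is a point the paper leaves implicit but is exactly the right bookkeeping.
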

\begin{proof}
	Using all rational tuples contained
	in~$\Omega_0$ as centers, we find a countable family $(B_k)_{k
		\in \setN}$ of balls covering $\Omega_0$ such that $B'_k:=2B_k
	\subset \subset \Omega_0$ for~every~${k \in \setN}$. Using the usual
	diagonalization procedure, we construct suitable subsequences
	such that $\w^n \to \w$ a.e.~in $\Omega_0$ $(n\to \infty)$, $\hat\nabla\w^n\to\hat\nabla\w$\footnote{Here, we used again that $(\hat\nabla\w)|_{\smash{B_k'}}=\nabla(\w|_{\smash{B_k'}})$ in $B_k'$ for all $k\in \mathbb{N}$ according to Remark~\ref{rem:hatgrad}.} a.e.~in $\Omega_0$ $(n\to \infty)$~and
	$\nabla\bv^n\to\nabla\bv$ a.e.~in $\Omega_0$ $(n\to \infty)$. Since $\vert \Omega\setminus \Omega_0\vert=0$, we proved~the~assertion.
\end{proof}
\section{Existence theorem for constant shear exponents}\label{veroeffentlichung3}
\hspace*{-0.2em}Now we are prepared to prove our first main result,
namely the existence of \mbox{solutions} to the problem \eqref{NS}, \eqref{maxwell} for
$p>\frac {2d}{d+2}$ without imposing the additional assumption that $\vert\bfE\vert^2$ belongs to the Muckenhoupt class $\mathcal A_p$.\newpage
\begin{Sa}\label{thm:main4}
  Let $\Omega\subseteq\R^d$, $d\geq 2$, be a bounded domain, 
  let Assumption \ref{VssS} and Assumption~\ref{VssN} be satisfied for
  some  $p> \frac {2d}{d+2}$ and let Assumption~\ref{VssE}~be~satisfied. Then,~for~every
  ${\ff\hspace*{-0.12em}\in\hspace*{-0.12em}(\Wpo)^*\!\!}$ and
  $\bell\hspace*{-0.12em}\in\hspace*{-0.12em} (\WpEo)^*\!$, there
  exist $\mathbf{v}\hspace*{-0.12em}\in\hspace*{-0.12em}\Vp$ and
  $ {\w\hspace*{-0.12em}\in\hspace*{-0.12em} \WpEo}$ such that for
  every
  $\boldsymbol\varphi \hspace*{-0.05em}\in\hspace*{-0.05em}
  C^1_0(\Omega)$~with~${\divo \bphi
    \hspace*{-0.05em}=\hspace*{-0.05em}0}$ and for every
  $\boldsymbol\psi \in C_0^1(\Omega)$ with
  ${\nabla \boldsymbol\psi\in L^{\smash{\frac{q}{q-2}}}(\Omega;\vert
    \bfE\vert^{\smash{-\frac{\alpha q}{q-2}}})}$ for some
  ${q\in \left[1,p^*\right)}$, it holds
	\begin{align}
		\begin{aligned}
		&\big\langle \Ss-\bv\otimes\bv,\bD\boldsymbol\varphi+\bR(\bphi,\boldsymbol\psi)\big\rangle
		\\
		& \;+\big\langle \bN(\hat \nabla \w ,\bE)-\w\otimes\bv,\nabla\boldsymbol\psi\big\rangle
		=\big\langle
		\ff,\boldsymbol\varphi\big\rangle+\big\langle\bell,
		\boldsymbol\psi\big\rangle \,.
	\end{aligned}\label{NS2aa-ii-final}
	\end{align}
	Moreover, there exists a constant $c>0$ such that 
	\begin{equation*}
		 \smash{ 	\Vert\bv\Vert_{1,p} +\Vert\w\Vert_{1,p,\vert\E\vert^2} \leq
			c\, \big (1+\norm{\bE}_2+\Vert
			\ff\Vert_{\smash{(\Wpo)^*}}+\Vert\bell\Vert_{\smash{(\WpEo)^*}}\big
			)\,.}
	\end{equation*}
\end{Sa}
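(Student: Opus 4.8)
The plan is to construct solutions via a Galerkin-type approximation combined with an $\epsilon$-regularization of the couple-stress equation, and then to pass to the limit using the weak stability result of Lemma \ref{DalMaso3} (more precisely, Corollary \ref{cor:ae-conv}) to identify the nonlinear limits. First I would set up an approximate problem: for $n\in\setN$, let $V_p^n\subseteq V_p(\Omega)$ and $W^n\subseteq \WpEo$ be finite-dimensional subspaces exhausting the respective spaces, and replace $\bN(\hat\nabla\w,\bE)$ by a regularized version $\bN(\hat\nabla\w,\bE)+\epsilon(1+\abs{\hat\nabla\w}^{q_0-2})\hat\nabla\w$ for a large fixed $q_0$ — this forces coercivity in an unweighted Sobolev norm on the degenerate set $\Omega\setminus\Omega_0$ and guarantees solvability of the finite-dimensional system by Brouwer's fixed-point theorem (using the coercivity coming from \hyperlink{(S.3)}{(S.3)}, \hyperlink{(N.3)}{(N.3)}, the sign structure $-\bfvarepsilon\colon\bS$ balanced against $\bS\colon\bR$ in the energy identity, and the skew-symmetry of the convective terms so that $\skp{\bv\otimes\bv}{\nabla\bv}=0$, $\skp{\w\otimes\bv}{\nabla\w}=0$). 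The a priori estimate claimed in the statement follows at once from testing with $(\bv^n_\epsilon,\w^n_\epsilon)$ itself: the crucial cancellation is that $\bfvarepsilon\colon\bS$ paired with $\w$ reproduces exactly $\bS\colon(\nabla\bv)^\anti$ minus $\bS\colon\bR$, so the coupling terms combine into $\skp{\bS}{\bD\bv+\bR(\bv,\w)}+\skp{\bN}{\hat\nabla\w}$, which is bounded below by $c\norm{\bv}_{1,p}^p + c\norm{\w}_{1,p,\abs{\bE}^2}^p - C$.

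Next I would pass $n\to\infty$ with $\epsilon>0$ fixed: the uniform bounds give weak limits $\bv_\epsilon\in V_p(\Omega)$, $\w_\epsilon\in\WpEo$ (and the $\epsilon$-term gives an extra uniform bound in $H^{1,q_0}_0(\Omega)$ for $\w^n_\epsilon$), the growth conditions \hyperlink{(S.2)}{(S.2)}, \hyperlink{(N.2)}{(N.2)} yield weakly converging stress tensors in the appropriate (weighted) $L^{p'}$-spaces, and Rellich together with $p>\frac{2d}{d+2}$ handles the convective terms $\bv^n_\epsilon\otimes\bv^n_\epsilon$ and $\w^n_\epsilon\otimes\bv^n_\epsilon$ (the latter using Theorem \ref{compactnew} to control $\w$ in a weighted Lebesgue space on compact subsets of $\Omega_0$, plus the $\epsilon$-regularity near the degenerate set). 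To identify the nonlinear limits of $\bS$ and $\bN$ I would verify the hypothesis \eqref{mon4} of Lemma \ref{DalMaso3}: testing the difference of the $n$-th equation and the limit equation with the Lipschitz truncations $\bu^{n,j},\bfpsi^{n,j}$, all terms except the "monotone" one go to zero because $\nabla\bu^{n,j}\weakto\bfzero$ and $\norm{\bfpsi^{n,j}}_\infty\to 0$ (here one uses \eqref{eq:C18}, the boundedness of the stresses, and that the convective and pressure contributions vanish in the limit by the established strong/weak convergences and $\divo\bu^{n,j}$ being controlled). Corollary \ref{cor:ae-conv} then gives $\nabla\bv^n_\epsilon\to\nabla\bv_\epsilon$, $\hat\nabla\w^n_\epsilon\to\hat\nabla\w_\epsilon$ and $\w^n_\epsilon\to\w_\epsilon$ a.e.\ in $\Omega$, whence Vitali's convergence theorem (using the growth bounds for equi-integrability) identifies $\bS(\bD\bv_\epsilon,\bR(\bv_\epsilon,\w_\epsilon),\bE)$ and $\bN(\hat\nabla\w_\epsilon,\bE)$ as the weak limits, so $(\bv_\epsilon,\w_\epsilon)$ solves the $\epsilon$-regularized weak formulation.

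Finally I would pass $\epsilon\to 0$. The a priori estimate is uniform in $\epsilon$, so again we extract weak limits $\bv\in V_p(\Omega)$, $\w\in\WpEo$; the only new point is that the regularizing term $\epsilon(1+\abs{\hat\nabla\w_\epsilon}^{q_0-2})\hat\nabla\w_\epsilon$ must vanish when tested against the admissible $\boldsymbol\psi$ — this is exactly why the test function class in the statement is restricted to $\boldsymbol\psi\in C_0^1(\Omega)$ with $\nabla\boldsymbol\psi\in L^{q/(q-2)}(\Omega;\abs{\bE}^{-\alpha q/(q-2)})$: one needs $\epsilon\skp{(1+\abs{\hat\nabla\w_\epsilon}^{q_0-2})\hat\nabla\w_\epsilon}{\nabla\boldsymbol\psi}\to 0$, which follows from the uniform $H^{1,q_0}_0$-bound on $\w_\epsilon$ (so the term is $O(\epsilon^{1/q_0'})$ in operator norm against $W^{1,q_0'}_0$) provided $\nabla\boldsymbol\psi$ lies in the relevant dual space; the weighted integrability condition with $q<p^*$ is what makes $\w_\epsilon\otimes\bv_\epsilon\rightharpoonup\w\otimes\bv$ testable against $\nabla\boldsymbol\psi$ via Theorem \ref{compactnew}. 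Reapplying Lemma \ref{DalMaso3}/Corollary \ref{cor:ae-conv} one more time (the construction of $\bu^n,\bfpsi^n$ and the verification of \eqref{mon4} being formally identical, now that the $\epsilon$-term is gone in the limit) yields a.e.\ convergence of the gradients, and Vitali's theorem identifies the stress limits, giving \eqref{NS2aa-ii-final}. \textbf{The main obstacle} I expect is the bookkeeping in verifying \eqref{mon4} — in particular ensuring that the pressure (reconstructed via the Bogovski\u\i{} operator, Theorem \ref{bog}) and the convective terms, when localized by $\tau$ and tested against the truncations, really do tend to zero uniformly in $j$; this requires care because the velocity truncation $\bu^{n,j}$ is not solenoidal, so one must either absorb $\divo\bu^{n,j}$ into a corrected test function or estimate the pressure contribution directly using $\norm{\nabla\bu^{n,j}\chi_{\{\bu^n\neq\bu^{n,j}\}}}_p\to 0$ and $\norm{\bu^{n,j}}_\infty\to 0$.
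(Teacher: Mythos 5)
Your overall architecture matches the paper in several places (regularized approximation, a priori bounds from \hyperlink{(S.3)}{(S.3)}/\hyperlink{(N.3)}{(N.3)}, Lipschitz truncation with a Bogovski\u{\i} correction of the non-solenoidal $\bu^{n,j}$ to verify \eqref{mon4}, Corollary \ref{cor:ae-conv} plus a.e.\ identification of the stresses, and Theorem \ref{compactnew} to handle $\skp{\w^n\otimes\bv^n}{\nabla\bpsi}$, which — not the vanishing of the regularization term — is the actual reason for the restriction on $\bpsi$). However, there is a genuine gap in your first limit passage. You propose a Galerkin scheme in finite-dimensional subspaces and then want to verify \eqref{mon4} by ``testing the difference of the $n$-th equation and the limit equation with the Lipschitz truncations $\bu^{n,j},\bfpsi^{n,j}$''. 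But the Galerkin identity holds only for test functions in the finite-dimensional spaces $V_p^n\times W^n$, and the truncations (and their Bogovski\u{\i} corrections $\bphi^{n,j}=\bu^{n,j}-\bw^{n,j}$) do not belong to these spaces, so the quantity appearing in \eqref{mon4} cannot be extracted from the approximate equations at this stage. Nor can you bypass the truncation at the Galerkin level by Minty's trick: you regularize only the couple-stress equation, so for the momentum equation with $p>\frac{2d}{d+2}$ (possibly $p<\frac{3d}{d+2}$) the convective term $\bv_\epsilon\otimes\bv_\epsilon$ is not in $L^{p'}(\Omega)$, the limit solution is not an admissible test function in its own equation, and the energy (in)equality needed for monotonicity arguments is unavailable — this is precisely the obstruction the Lipschitz truncation is meant to overcome.

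The paper avoids this by using a single-level approximation: for each $n$ the regularized problem \eqref{NS2} (with penalizations $\tfrac1n|\bv^n|^{r-2}\bv^n$, $\tfrac1n|\w^n|^{r-2}\w^n$, $r>2p'$, making the convective terms compact, and the non-degenerate term $\tfrac1n(1+|\nabla\w^n|)^{p-2}\nabla\w^n$ making the angular-momentum equation coercive in the unweighted space) is solved in the full infinite-dimensional setting $\Vp\cap\Lr\times(\Wpo\cap\Lr)$ by pseudomonotone operator theory (or as in \cite[Thm.~6.1]{erw}). Consequently the $n$-th equation holds for \emph{all} test functions in these spaces, in particular for $\bphi^{n,j}\in V_s(B')$ and $\bpsi^{n,j}\in W^{1,\infty}_0(B')$, which is exactly what the verification of \eqref{mon4} (the decomposition into $J_1^{n,j},\dots,J_{10}^{n,j}$ in \eqref{eq:diff1}) requires. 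To repair your scheme you would either have to discard the Galerkin layer in favour of such an infinite-dimensional regularized problem, or regularize the momentum equation as well (with a large exponent and penalization strong enough that Minty's trick applies at the Galerkin limit), and only then deploy the Lipschitz truncation in the final limit passage; as written, the identification of the weak limit of $\bS$ at the first stage does not go through. Two further small corrections: the vanishing of the $\epsilon$-term against $\bpsi$ needs only $\nabla\bpsi\in L^{q_0}(\Omega)$, so it does not motivate the weighted condition on $\nabla\bpsi$; and no pressure is reconstructed anywhere — the Bogovski\u{\i} operator enters only through the corrector $\bw^{n,j}$, whose contribution is the term $J_8^{n,j}$ estimated by \eqref{eq:n3.29*a}.
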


\begin{Bem}
	\begin{enumerate}
	\item[{\rm (i)}] The lower bound $\smash{p>\frac{2d}{d+1}}$ in
          \cite[Thm.~6.44]{erw} is improved~in Theorem \ref{thm:main4} to $\smash{p>\frac{2d}{d+2}}$.
	\item[{\rm (ii)}] In contrast to \cite[\!Thm.~\!5.49]{erw},
          \cite[\!Thm.~\!5.56]{erw} and \cite[\!Thm.~\!5.59]{erw},~we~do~not require in Theorem \ref{thm:main4} that
          $\vert\E\vert^2\in \mathcal A_p$. Even though we know that~~the~real analytic function $\vert\E\vert^2$ belongs to
          the Muckenhoupt~class~$\mathcal A_\infty$~(cf.~\mbox{Section~\ref{sec:E}}), this does not imply that $\vert\E\vert^2$ belongs
          to the Muckenhoupt~class~$\mathcal A_p$.
	
	\item[{\rm (iii)}] The results \cite[Thm.~5.56]{erw},
          \cite[Thm.~5.59]{erw} and \cite[Thm.~6.44]{erw}
          consider test functions
          $\boldsymbol\psi \in \vert \bfE\vert^\beta\times
          C_0^1(\Omega)$ for some $\beta\ge 2$~instead~of~${\boldsymbol\psi \in C_0^1(\Omega)}$ with
          $\nabla \boldsymbol\psi\hspace*{-0.1em}\in\hspace*{-0.1em}
          L^{\smash{\frac{q}{q-2}}}(\Omega;\vert
          \bfE\vert^{\smash{-\frac{\alpha q}{q-2}}})$ for some
          ${q\hspace*{-0.1em}\in\hspace*{-0.1em} \left[1,p^*\right)}$. However,~any~${\boldsymbol\psi\hspace*{-0.15em} \in\hspace*{-0.15em} C_0^1(\Omega_0)^d}$, satisfies both
          $\boldsymbol\psi \hspace*{-0.11em}\in\hspace*{-0.11em} \vert \bfE\vert^2\hspace*{-0.11em}\times\hspace*{-0.11em}
          C_0^1(\Omega)$ and
          ${\nabla \boldsymbol\psi\hspace*{-0.11em}\in\hspace*{-0.11em}
          L^{\smash{\frac{q}{q-2}}}(\Omega;\hspace*{-0.11em}\vert
          \bfE\vert^{\smash{-\frac{\alpha q}{q-2}}})}$~for~all~${q\hspace*{-0.11em}\in\hspace*{-0.11em} \left[1,p^*\right)}$.  Hence, if we assume that there exist 
          sufficiently smooth solutions
          ${\bfv:{\Omega}\to
          \mathbb{R}^d}$ and  $\w:{\Omega}\to
          \mathbb{R}^d$ of \eqref{NS2aa-ii-final} or \cite[(6.46)]{erw},
          then testing with $\boldsymbol\psi \in C_0^1(\Omega_0)$,
          integration by parts, the fundamental theorem of calculus of
          variations~and using
          that~${\vert
            \Omega\setminus\Omega_0\vert=0}$,~we~readily deduce~that
\begin{align*}
		\begin{aligned}
			-\Div \mathbf{S}+\Div(\mathbf{v}\otimes\mathbf{v})
			+\nabla\pi&=\mathbf{f} &&\text{in} \ \Omega\,,\\
			-\Div\mathbf{N}+\Div(\boldsymbol{\omega}\otimes
			\mathbf{v})&=\boldsymbol{\ell}-\bfvarepsilon:\mathbf{S}
			&\quad&\text{in} \ \Omega\,,
		\end{aligned}
\end{align*}
	i.e., the weak formulations in \cite[Thm. 5.56, Thm. 5.59 \& Thm. 6.44]{erw} and Theorem~\ref{thm:main4} yield comparable results.
	\end{enumerate}
\end{Bem}

\begin{proof}
  \textbf{1. Non-degenerate approximation and a-priori estimates:}\\
  Analogously to \cite[Thm.~6.1]{erw} or using the standard theory of
  pseudomonotone operators, one can show that for every
  $n\in \mathbb{N}$, there exist
  functions~$(\mathbf{v}^n,\boldsymbol\omega^n) \in {(\Vp\cap\Lr)}
  \times (\Wpo\cap\Lr)$ satisfying for every
  $\boldsymbol\varphi\in \Vp\cap\Lr$ and
  $\boldsymbol\psi\in \Wpo\cap\Lr$
	\begin{align}
		\hspace*{-0.1cm}\begin{aligned}
		&\big\langle \Sn	-\bv^n\otimes\bv^n,\bD\boldsymbol\varphi+\bR(\bphi,\boldsymbol\psi)\big\rangle+\tfrac 1n
		\big\langle\vert\mathbf{v}^n\vert^{r-2}\mathbf{v}^n,\bphi\big\rangle 
		\\
		& \;+\big\langle\bN^n_{\textup{non-deg}}-\w^n\otimes\bv^n,\nabla\boldsymbol\psi\big\rangle  +\tfrac 1n
		\big\langle\vert{\w}^n\vert^{r-2}{\w}^n,\bpsi\big\rangle=\big\langle
		\ff,\boldsymbol\varphi\big\rangle+\big\langle\bell,\boldsymbol\psi\big\rangle\,,
	\end{aligned} \label{NS2}
	\end{align}
	where $\bN^n_{\textup{non-deg}}:= \bN(\nabla \bomega^n,\bE) +\frac{1}{n}(1+\vert\nabla
	\boldsymbol{\omega}^n\vert)^{{p-2}{}}\nabla\boldsymbol{\omega}^n$
	and\footnote{We have chosen the exponent $r>2 p'$ such that both convective terms
	  $\skp{\bv\otimes \bv}{\nabla \bphi}$~and $\skp{\w\otimes
	    \bv}{\nabla \bphi}$ define compact operators from $\Lr\times \Lr$
	  to $(\Wpo)^*$.}
	$r>2p'$ is fixed. The existence of these solutions is for every $n\in \mathbb{N}$ based on the
        a-priori estimate 
	\begin{align} \label{apri0}
          \begin{aligned}
            &\int_{\Omega}{\big (1+ \abs{\bE}^2\big )\vert
              \bfD\bfv^n\vert^p+ \vert \nabla\w^n\vert^p\vert
              \bfE\vert^2+\vert \bR(\bfv^n,\w^n)\vert^p\abs{\bE}^2\,dx}
            \\
            &\quad +\smash{\int_{\Omega}{\tfrac{1}{n}\big(\vert
              \bfv^n\vert^r+\vert
              \nabla\w^n\vert^p+\vert\w^n\vert^r\big)\,dx} \le
            c\,, }
          \end{aligned}
	\end{align}
        which follows, using (\hyperlink{(S.3)}{S.3}) and (\hyperlink{(N.3)}{N.3}), in a
        standard way.  
	Using~Korn's~inequality~in the non-weighted case, the
        definition of $\bR(\bv, \w)$ and  Poincar\'e's inequality in the non-weighted case, we
        deduce, as in \cite[Sec.~4]{erw}, from \eqref{apri0} that~there~exists~a constant
        ${K \!:=\!K(\|\bE\|_2,\|\ff\|_{(\Wpo)^*},\|\bell\|_{(\WpEo)^*})\!>\!0}$~such~that~for~every~${n\in \mathbb{N}}$ 
	\begin{align}
		&
		\Vert\bv^n\Vert_{1,p}+\Vert\w^n\Vert_{1,p,\vert\E\vert^2}+\tfrac{1}{n}\Vert \mathbf{v}^n\Vert_r +
		\tfrac{1}{n} \Vert\boldsymbol\omega^n\Vert_{1,p}
		+\tfrac{1}{n}\Vert\boldsymbol\omega^n\Vert_r \leq K\,. \label{apri}
	\end{align}
Apart from that, using (\hyperlink{(S.2)}{S.2}),
(\hyperlink{(N.2)}{N.2}),~\eqref{apri}, $\bE \in L^\infty(\Omega)$ (cf.~Assumption \ref{VssE}) and the notation introduced in \eqref{eq:not}, we obtain for every $n\in \mathbb{N}$ that
	\begin{equation}
		\label{eq:est-SNn}
		\begin{aligned}\Vert\mathbf{S}^n\Vert_{p'}+
			\Vert (\mathbf{S}^n)^\anti\Vert_{\smash{p',\vert \bfE\vert^{\smash{\frac{-2}{p-1}}}}}+
			\Vert\mathbf{N}^n\Vert_{\smash{p',\vert \bfE\vert^{\smash{\frac{-2}{p-1}}}}}\le K\,.
		\end{aligned}
	\end{equation}

	\textbf{2. Extraction of (weakly) convergent subsequences:}\\
	The estimates \eqref{apri}, \eqref{eq:est-SNn} and Rellich's compactness theorem yield not relabeled subsequences as well as functions 
	${\bv\hspace*{-0.1em}\in \hspace*{-0.1em}V_p(\Omega)}$, ${\w\hspace*{-0.1em}\in\hspace*{-0.1em} \WpEo
}$, $\smash{\widehat \bS \hspace*{-0.1em}\in\hspace*{-0.1em} L^{p'}(\Omega)}$ and
	$\smash{\widehat \bN \hspace*{-0.1em}\in\hspace*{-0.1em}L^{p'}(\Omega;\vert\bfE\vert^{\frac{-2}{p-1}})}$ such that
	\begin{align}
			\bv^n&\rightharpoonup\bv\quad& &\text{in}\
			V_p(\Omega)&&\quad(n\to \infty)\,,\hphantom{\;}\notag
			\\
			\bv^n&\to\bv\quad& &\text{in}\ \Lq\text{ and a.e.~in
			}\Omega&&\quad(n\to \infty)\,,\label{konvergenz}
			\\[-1mm]
			\w^n&\rightharpoonup\w\quad& &\text{in}\ \WpEo &&\quad(n\to \infty)\,,\notag\\[1mm]
				\mathbf{S}^n&\rightharpoonup \widehat \bS \;& &\text{in}\
			L^{p'}(\Omega)&&\quad(n\to \infty)\,,\notag
			\\[-1mm]
			\hspace*{-1.75em}(\mathbf{S}^n)^{\anti}&\rightharpoonup \widehat \bS^\anti & &\text{in}\ L^{p'}(\Omega;\vert\bfE\vert^{\frac{-2}{p-1}})&&\quad(n\to \infty)\,,\label{konvergenz18}
			\\[-1mm]
			\mathbf{N}^n&\rightharpoonup \widehat\bN \quad& &\text{in}\
			L^{p'}(\Omega;\vert\bfE\vert^{\frac{-2}{p-1}})&&\quad(n\to \infty)\,,\notag
	\end{align}          
	where $q\in \left[1,p^*\right)$.

	\textbf{3. Identification of $\widehat\bS$ with $\Ss$~and~$\widehat\bN$~with~${\bfN(\hat\nabla\w,\bE)}$:}\\
	Recall that $\Omega_0= \{ x\in \Omega \fdg \vert
  \mathbf{E}(x)\vert>0\}$.	Next, let $B\subset\subset \Omega_0$ be a
  ball such that $B':=2B\subset \subset\Omega_0$. Then, due to Lemma~\ref{hatgrad}, we have  $W^{1,p}(B')=H^{1,p}(B',\vert\bfE\vert^2)$ with norm equivalence (depending on $B'$~and~$\bfE$). \!Therefore, from~$\eqref{konvergenz}_3$~and~Rellich's compactness theorem, we deduce that
	\begin{align}
		\begin{aligned}\label{konvergenz-w}
			\w^n&\rightharpoonup\w\quad& &\text{in}\  W^{1,p}(B')&&\quad(n\to \infty)\,,
			\\
			\w^n&\to\w\quad& &\text{in}\ L^q(B') \text{ and a.e.~in }
			B'&&\quad(n\to \infty)\,,
		\end{aligned}
	\end{align}
	where $q\in \left[1,p^*\right)$. In particular, this implies that $\w \in
	W^{1,p}(B')\cap L^q(B')$.~Next,~let $\tau\hspace*{-0.15em}\in\hspace*{-0.15em} C_0^\infty(B')$ satisfy $\chi_B \hspace*{-0.15em}\le\hspace*{-0.15em} \tau\hspace*{-0.15em}\leq\hspace*{-0.15em}
	\hspace*{-0.15em}\chi_{B'}$. 
        According to
	(\ref{konvergenz})$_{1}$ and (\ref{konvergenz-w})$_{1}$,~it~follows~that
	\begin{align}
		\begin{aligned}\label{konvergenz-w1}
			\mathbf{u}^n&:=(\mathbf{v}^n-\mathbf{v})\tau\rightharpoonup\mathbf{0}
			& &\text{in}\ W^{1,p}_0(B')&&\quad(n\to \infty)\,,
			\\
			\bfpsi^n&:=(\boldsymbol\omega^n-\boldsymbol\omega)\tau
			\rightharpoonup\mathbf{0} & &\text{in}\
			W^{1,p}_0(B')&&\quad(n\to \infty)\,.
		\end{aligned}
	\end{align}
	Denote for $n\in \mathbb{N}$, the Lipschitz truncation of $\bu^n\in W^{1,p}_0(B')$~and~${\bfpsi^n\in W^{1,p}_0(B')}$
	according to Theorem \ref{thm:Lt} with respect to the ball $B'$ by $(\mathbf{u}^{n,j})_{j\in \mathbb{N}}\!\subseteq\!
	W^{1,\infty}_0(B')$~and ${(\bfpsi^{n,j})_{j\in \mathbb{N}}\subseteq W^{1,\infty}_0(B')}$,
	resp. In particular, on the basis of \eqref{konvergenz-w1}, Theorem \ref{thm:Lt} implies that the Lipschitz truncations satisfy for
	every $j \in \setN$~and~${s \in [1,\infty)}$ 
	\begin{align}
		\begin{aligned}\label{Testfkt_2}
			\mathbf{u}^{n,j}&\rightharpoonup \mathbf{0}& &\text{in}\
			W^{1,s}_0(B')&&\quad(n\to \infty)\,,
			\\
			\mathbf{u}^{n,j}&\to \mathbf{0}& &\text{in}\
			L^{s}(B')&&\quad(n\to \infty)\,,
			\\
			\bfpsi^{n,j}&\rightharpoonup \mathbf{0} & & \text{in}\
                        W^{1,s}_0(B')&&\quad(n\to \infty)\,,
			\\
			\bfpsi^{n,j}&\to \mathbf{0}& &\text{in}\ 
			L^{s}(B')&&\quad(n\to \infty)\,,
                        \\
                        \bfpsi^{n,j}&\rightharpoonup \mathbf{0} & & \text{in}\
                        H^{1,s}_0(B';\abs{\bE}^2)&&\quad(n\to \infty)\,,
		\end{aligned}
	\end{align}
        where we used in the last line that $\bE\in L^\infty(\Omega)$ holds.
	Note that $\bfpsi^{n,j}\hspace*{-0.12em}\in\hspace*{-0.12em}
        W^{1,\infty}_0(B')$, $n,j\hspace*{-0.12em}\in\hspace*{-0.12em} \mathbb{N}$, are
        suitable test-functions~in~\eqref{NS2}.~\mbox{However}, 
        $\mathbf{u}^{n,j}\hspace*{-0.1em}\in\hspace*{-0.1em}
        W^{1,\infty}_0(B')$,
        $n,j\hspace*{-0.1em}\in\hspace*{-0.1em} \mathbb{N}$, are not
        admissible in \eqref{NS2} because they are not
        di\-vergence-free.~To~correct~this,  we
        define 
        $ \bw^{n,j}\hspace*{-0.2em}:=\hspace*{-0.1em}\mathcal{B}_{B'}(\divo
          \mathbf{u}^{n,j})$,~${n,j\hspace*{-0.1em}\in\hspace*{-0.1em}
          \mathbb{N}}$, where~${\mathcal{B}_{B'}\hspace*{-0.1em}:\hspace*{-0.1em}L^s_0(B')\hspace*{-0.1em}\to\hspace*{-0.1em} W^{1,s}_0(B')}$
        denotes the Bogovskii operator with respect~to~$B'\!$, ensured by
        Theorem~\ref{bog}.~Since~$\mathcal{B}_{B'}\!$~is~weakly continuous, \eqref{Testfkt_2}$_{1}$ and
        Rellich's compactness theorem~imply for
        every~${j\hspace*{-0.1em}\in\hspace*{-0.1em} \setN}$ and~${s\hspace*{-0.1em} \in\hspace*{-0.1em} (1, \infty)}$ that
	\begin{align}
		\begin{aligned}\label{Testfkt_3}
			\bw^{n,j}&\rightharpoonup \mathbf{0} & &\text{in}\ W^{1,s}_0(B')&&\quad(n\to \infty)\,,
			\\
			\bw^{n,j}&\to \mathbf{0} & &\text{in}\
			L^s(B')&&\quad(n\to \infty)\,.
		\end{aligned}
	\end{align}
Moreover, owing to the boundedness of $\mathcal{B}_{B'}$, one has for
any~${n,j \hspace*{-0.1em}\in \hspace*{-0.1em}\setN}$~and~${s
  \hspace*{-0.1em} \in \hspace*{-0.1em}  (1,\infty)}$ that
	\begin{equation}
		\label{eq:divo}
		\begin{aligned}
			\Vert\bw^{n,j}\Vert_{W^{1,s}_0(B')}&\leq
			c\,\Vert\Div\mathbf{u}^{n,j}\Vert_{L^s(B')}\,.
		\end{aligned}
	\end{equation}
	On the  basis of $\nabla \bfu^n = \nabla \bfu^{n,j}$ on the set $\set{\bfu^n =
		\bfu^{n,j} }$ (cf.~\cite[Cor.~1.43]{maly-ziemer})~and $\divo
	\bfu^n =\nabla \tau \cdot (\bv^n -\bv)$ for every~${n,j\in \setN}$, we further get for every~${n,j\in \setN}$ that
	\begin{equation}	\label{eq:divo2}
		\divergence \bfu^{n,j} = \chi_ { \set{\bfu^n \not= \bfu^{n,j} }}
		\divergence \bfu^{n,j} + \chi_ { \set{\bfu^n = \bfu^{n,j} }}\nabla
		\tau \cdot (\bv^n -\bv)\quad\text{ a.e.~in }B'\,.
	\end{equation}
	Then, \eqref{eq:divo} with $s=p$ and 	\eqref{eq:divo2}
        together imply for every~${n,j\in \setN}$ that
	\begin{align*}
		\norm{\bw^{n,j}}_{W^{1,p}_0(B')}\leq c\,
		\norm{\nabla \bfu^{n,j}\, \chi_ { \set{\bfu^n \not= \bfu^{n,j} } }
		}_{L^{p}(B')}  + c\,(\norm{\nabla \tau}_\infty ) \norm{\bv^n -\bv}_{L^{p}(B')}
		\,,
	\end{align*}
	which in conjunction with \eqref{eq:C18} and \eqref{konvergenz}$_1$  yields for every $j \in \setN$ that
	\begin{align}
		\begin{aligned}
			\smash{\limsup_{n\to \infty} \|\bw^{n,j}\|_{W^{1,p}_0(B')} \leq c\,2^{\frac {-j}p}}\,.
		\end{aligned}
		\label{eq:n3.29*a}
	\end{align}
	Setting
        $\boldsymbol\varphi^{n,j}:=\mathbf{u}^{n,j}-\bw^{n,j}$, ${n,j\in \setN}$,
	we observe that ${(\boldsymbol\varphi^{n,j})_{n,j\in
            \setN}\subseteq V_s(B')}$, $s \in (1,\infty)$, i.e., they are suitable test-functions in
	\eqref{NS2}. To use Corollary~\ref{cor:ae-conv}, we  have to verify that  condition
	\eqref{mon4} is satisfied. 
	 To this end, we test equation (\ref{NS2})
	with the admissible test-functions $\bphi=\boldsymbol\varphi^{n,j}$ and 
	$\bpsi=\bfpsi^{n,j}$ for every ${n,j\in \setN}$ and subtract on both sides 
	\begin{align*} 
		\big\langle\Ss, \D\bu^{n,j}+\Rr(\bu^{n,j},\bfpsi^{n,j})\big\rangle+
		\big\langle\bN(\hat\nabla\w,\bE),\nabla\bfpsi^{n,j}\big\rangle\,, \;\: n,j\in \setN\,. 
	\end{align*} 
	Owing to $\bphi^{n,j}=\bu^{n,j}-\bw^{n,j}$, 
        this yields~for~every~${n,j\in \setN}$~that 
	\begin{align}
		&\big\langle\bfS^n-\Ss,\,
		\D\bu^{n,j}+\Rr(\bu^{n,j},\bfpsi^{n,j})\big\rangle \notag
		+ \big\langle\bfN^n-\bN(\hat\nabla\w,\bE),\nabla\bfpsi^{n,j}\big\rangle \notag 
		\\
		&= \big\langle \ff,\boldsymbol\varphi^{n,j}\big\rangle +
		\big\langle\bell,\bfpsi^{n,j}\big\rangle -\tfrac 1n
		\skp{\vert\mathbf{v}^n\vert^{r-2}\mathbf{v}^n}{\bphi^{n,j}}
		+\big\langle
		\bv^n\otimes\bv^n,\nabla\boldsymbol\varphi^{n,j}\big\rangle  \notag
		\\
		&\quad -\tfrac 1n \bigskp{(1+ \abs{\nabla \w^n})^{p-2}\nabla
			\w^n}{\nabla\bfpsi^{n,j}} -\tfrac 1n
		\skp{\vert{\w}^n\vert^{r-2}{\w}^n}{\bfpsi^{n,j}} \notag
		\\
		&\quad
		+\big\langle\w^n\otimes\bv^n,\nabla\bfpsi^{n,j}\big\rangle
		+\big\langle \Sn,\nabla\bw^{n,j}\big\rangle \notag
		\\
		&\quad -\big\langle\Ss, \D\bu^{n,j}+\Rr(\bu^{n,j},\bfpsi^{n,j})\big\rangle
		-\big\langle\bN(\hat\nabla\w,\bE),\nabla\bfpsi^{n,j}\big\rangle \notag
		\\[-1mm]
		&=:
		{\sum}_{k=1}^{10} J_k^{n,j}\,.\label{eq:diff1}
	\end{align}

        \noindent On the basis of $\bv\in V_p(\Omega)$ and
        $\w\in \WpEo $, we
        get~using~(\hyperlink{(S.2)}{S.2})~and~(\hyperlink{(N.2)}{N.2})~that
        $\Ss\hspace*{-0.12em} \in\hspace*{-0.12em} L^{p'}(\Omega)$ and
        $\smash{\bN(\hat\nabla\w,\bE) \hspace*{-0.12em}\in\hspace*{-0.12em} L^{p'}(\Omega;\vert
          \bfE\vert^{\smash{\frac{-2}{p-1}}})}$~(cf.~\eqref{eq:est-SNn}).~Using~this, (\ref{Testfkt_2}) and (\ref{Testfkt_3}), we conclude for
        every $j \in \setN$ that
	\begin{align}\label{eq:j1} 
		\lim_{n\to\infty} J_1^{n,j} +J_2^{n,j} +J_9^{n,j} +J_{10}^{n,j} =0\,.
	\end{align}
	From  (\ref{apri}),
	(\ref{Testfkt_2}) and (\ref{Testfkt_3}), we obtain for every $j \in
	\setN$ that
	\begin{align}\label{eq:j3} 
		\lim_{n\to\infty} J_3^{n,j} +J_5^{n,j} +J_6^{n,j} =0\,.
	\end{align}
	Using the notation \eqref{eq:not}, the estimates \eqref{eq:est-SNn}
	and
	\eqref{eq:n3.29*a}, we get for every ${j\in \setN}$ that 
	\begin{align}
		\limsup_{n\to\infty} J_8^{n,j} \leq \limsup_{n\to\infty}\Vert \bfS^n\Vert_{p'} \Vert
		\nabla\bw^{n,j}\Vert_{L^p(B')}
		\leq
		c\,K\, 2^{\frac
			{-j}{p}}=:\delta_j\,.\label{eq:j8}
	\end{align} 
	From \eqref{konvergenz}$_2$ and
	\eqref{konvergenz-w}$_2$, it further  follows that %
	\begin{equation}
		\begin{split}\label{conv-convec} 
			\begin{aligned}
				\mathbf{v}^n\otimes \mathbf{v}^n&\to \mathbf{v}\otimes
				\mathbf{v}&& \text{in}\ L^{s'}(\Omega)&&\quad(n\to \infty)\,,
				\\
				\w^n\otimes \mathbf{v}^n&\to \w\otimes \mathbf{v}&&\text{in}\ L^{s'}(B')&&\quad(n\to \infty)\,,
			\end{aligned}
			\hspace*{5mm}
			\begin{aligned}
				s'\in \Big[1, \frac{p^*}{2}\Big)\,.
			\end{aligned}
		\end{split}
	\end{equation}
	Thus, combining (\ref{Testfkt_2}), (\ref{Testfkt_3}) and
        \eqref{conv-convec}, we find that for every $j \in \setN$ that
	\begin{align}\label{eq:j4} 
		\lim_{n\to\infty} J_4^{n,j} +J_7^{n,j}  =0\,.
	\end{align}
	From \eqref{eq:diff1}--\eqref{eq:j4}, it follows \eqref{mon4}. Thus, Corollary \ref{cor:ae-conv} yields subsequences~with
	\begin{align}
          \begin{aligned}
		\nabla\bv^n&\to\nabla\bv&&\quad\text{ a.e.~in }\Omega\,,\\[-0.5mm] 
		\hat\nabla\w^n&\to\hat\nabla\w&&\quad\text{ a.e.~in }\Omega\,,\\
		\w^n&\to\,\w&&\quad\text{ a.e.~in }\Omega\,.
              \end{aligned}\label{pw}
	\end{align}
	 Since $\mathbf{S}\in C^0(\mathbb{R}^{d\times d}_{\textup{sym}}\times \mathbb{R}^{d\times d}_{\textup{skew}}\times \mathbb{R}^d;\mathbb{R}^{d\times d})$~(cf.~(\hyperlink{(S.1)}{S.1})) and $\mathbf{N}\in C^0(\mathbb{R}^{d\times d}\times \mathbb{R}^d;\mathbb{R}^{d\times d})$ (cf.~(\hyperlink{(N.1)}{N.1})), we
	deduce from \eqref{pw} that
	\begin{align}\label{eq:aen}
		\begin{aligned}
			\bS^n&\to \Ss& &\text{a.e.~in}\ \Omega&&\quad(n\to \infty)\,,
			\\
			\bN^n&\to  \bN(\hat\nabla\w,\bE)& &\text{a.e.~in}\ \Omega&&\quad(n\to \infty)\,.
		\end{aligned}
	\end{align}
	To identify $\widehat \bS$, we now argue as in the proof of
	\mbox{\cite[Thm.~4.6  \!(cf.  \!(4.21)$_1$--(4.23)$_1$)]{erw}}, while Theorem
	\ref{pfastue} (with $G=\Omega$ and $\sigma=\smash{\abs{\bE}^2}$),
	\eqref{konvergenz18}, \eqref{eq:aen} and the absolute continuity of
	Lebesgue measure with respect to 
	$\nu_{\smash{\abs{\bE}^2}}$ is used to
	identify~$\smash{\widehat \bN}$. Thus, we just proved
	\begin{align}
		\begin{aligned}\label{eq:SN}
			\widehat \bS=\Ss \quad \text {and} \quad 
			\widehat \bN=\bN(\hat\nabla\w,\bE)\,.
		\end{aligned}
	\end{align}
	\textbf{4. Limiting process $n\to \infty$:}\\
	Now we have at our disposal everything to identify the limits of all
	but~one~term in \eqref{NS2}. Using \eqref{apri},  \eqref{konvergenz}, \eqref{konvergenz18},
	\eqref{conv-convec}$_1$,  \eqref{eq:SN} as well as  $p> \frac {2d}{d+2}$, we~obtain from \eqref{NS2}
	that  for every
	$\boldsymbol\varphi \in C^1_0(\Omega)$ with $\divo
	\bphi =0$ and for every $\boldsymbol\psi \in C_0^1(\Omega)$,~it~holds
	\begin{align}\begin{aligned}
		&\big\langle \Ss-\bv\otimes\bv,\bD\bphi+\bR(\bphi,\boldsymbol\psi)\big\rangle 
		\\
		& \;+\big\langle\bN(\hat\nabla\w,\bE),\nabla\boldsymbol\psi\big\rangle -\lim _{n\to
			\infty
		}\big\langle\w^n\otimes\bv^n,\nabla\boldsymbol\psi\big\rangle
		=\big\langle
		\ff,\boldsymbol\varphi\big\rangle+\big\langle\bell,\boldsymbol\psi\big\rangle
		\,.
	\end{aligned}\label{NS2aa}
	\end{align}
	Finally, we have to identify 
        the remaining
        limit~in~\eqref{NS2aa}. 
        To this end,  we
        fix an arbitrary $\boldsymbol\psi \in C_0^1(\Omega)$ with
        $\nabla \boldsymbol\psi\in
        L^{\smash{\frac{q}{q-2}}}(\Omega;\vert
        \bfE\vert^{\smash{-\frac{\alpha q}{q-2}}})$ and choose
        $\Omega'$ with Lipschitz boundary such that 
        $\textup{int}(\textup{supp}(\boldsymbol\psi)) \subset\subset
        \Omega' \subset\subset\Omega$ holds. Due
        to Theorem~\ref{compactnew} and
        $\eqref{konvergenz}_3$,~for~all~${q\hspace*{-0.15em}\in\hspace*{-0.15em}
          \left[1,p^*\right)}$,~it~holds
	\begin{align*}
		\w^n\rightharpoonup \w\quad\text{ in }L^q(\Omega';\vert\bE\vert^{\alpha q})\quad(n\to \infty)
	\end{align*}
	for every $\alpha\hspace*{-0.1em}\ge \hspace*{-0.1em}1+\frac{2}{p}$. On the other hand, due to $\nabla \boldsymbol\psi\hspace*{-0.1em}\in\hspace*{-0.1em} L^{\smash{\frac{q}{q-2}}}(\Omega;\vert \bfE\vert^{\smash{-\frac{\alpha q}{q-2}}})$~and~$\eqref{konvergenz}_2$, using H\"older's inequality, for any $q\in \left[1,p^*\right)$, we also see that
	\begin{align*}
		\nabla\boldsymbol\psi\,\bv^n\rightarrow\nabla\boldsymbol\psi\,\bv\quad\text{ in }L^{q'}(\Omega';\vert\bE\vert^{\frac{-\alpha q}{q-1}})\quad(n\to \infty)\,.
	\end{align*}
	Since $(L^q(\Omega';\vert\bE\vert^{\alpha
          q}))^*\hspace*{-0.1em}=\hspace*{-0.1em}L^{q'}(\Omega';\vert\bE\vert^{\frac{-\alpha
            q}{q-1}})$, we infer that
        $$
        {\lim _{n\to \infty
          }\big\langle\w^n\otimes\bv^n,\nabla\boldsymbol\psi\big\rangle}
        \big\langle\w\otimes\bv,\nabla\boldsymbol\psi\big\rangle\,,
        $$
        which, looking back to \eqref{NS2aa}, concludes the proof of
        Theorem~\ref{thm:main4}.
\end{proof}

\section{Variable shear exponent}\label{veroeffentlichung1p(x)}

In this section, we extend the existence result in Theorem
\ref{thm:main4}~to~the~case~of variable exponents. Before we do so, we first give a
brief introduction~into~weighted variable exponent Lebesgue and
Sobolev spaces. Then, we explain the changes~in the arguments in the
previous sections due to the variable~exponent~setting. 
\subsection{Weighted variable exponent Lebesgue and Sobolev spaces}

Let $\Omega\subseteq \mathbb{R}^d$, $d\in \mathbb{N}$, be an open set
and $p:\Omega\to [1,\infty)$ be a measurable function, called variable
exponent. By $\mathcal{P}(\Omega)$, we denote the set of all variable
exponent. For $p\in \mathcal{P}(\Omega)$, we denote by
${p^+\!:=\textup{ess\,sup}_{x\in
    \Omega}{p(x)}}$~and~${p^-\!:=\textup{ess\,inf}_{x\in
    \Omega}{p(x)}}$ its constant limit exponents. By
$\mathcal{P}^{\infty}(\Omega):=\{p\in\mathcal{P}(\Omega)\mid
p^+<\infty\}$, we denote the set of all bounded variable exponents. For
$p\in\mathcal{P}(\Omega)$,~we~use~the,~by~now~\mbox{standard}, variable exponent 
Lebesgue spaces $L^{p(\cdot)}(\Omega)$ equipped with the Luxembourg
norm $\|\cdot\|_{p(\cdot)}$ and  Sobolev spaces
$W^{1,p(\cdot)}(\Omega)$ with the norm
$\|\cdot\|_{1,p(\cdot)}:=\|\cdot\|_{p(\cdot)}
+\|\nabla\cdot\|_{p(\cdot)}$. These spaces are separable Banach
spaces.~The~space~$W^{1,p(\cdot)}_0(\Omega)$~is~defined as the
completion of $C_0^\infty(\Omega)$ with respect to the gradient norm
$\Vert\nabla \cdot\Vert_{p(\cdot)}$, while the space $V_{p(\cdot)}(\Omega)$ is
the closure of $C_{0,\textup{div}}^\infty(\Omega)$ with respect to the
gradient~norm~$\|\nabla \cdot\|_{p(\cdot)}$. By
$\smash{L^{p(\cdot)}_0(\Omega)}$, we denote the subspace of
$L^{p(\cdot)}(\Omega)$ consisting of all functions with
vanishing~mean~value. If $p\in \mathcal{P}^{\infty}(\Omega)$, in
addition, satisfies $p^->1$, then~the~spaces
$L^{p(\cdot)}(\Omega)$, $L^{p(\cdot)}_0(\Omega)$,
$\smash{W^{1,p(\cdot)}(\Omega)}$,
$\smash{W^{1,p(\cdot)}_0(\Omega)}$ and $\smash{V_{p(\cdot)}(\Omega)}$
are reflexive.  For a~more in-depth  analysis of these spaces, we refer to
\cite{KR91}, \cite{fan-zhao},~\cite{lpx-book}~and~\cite{CUF13}.

For a variable exponent $p\in \mathcal{P}^{\infty}(\Omega)$ and a
weight $\sigma \in L^1_\loc(\setR^d)$, the weighted variable exponent Lebesgue space
$L^{p(\cdot)}(\Omega;\sigma)$ consists of all measurable~functions
${u:\Omega\to \mathbb{R}}$, i.e., $u\in \mathcal{M}(\Omega)$, for
which the modular
 \begin{align*}
 	\rho_{p(\cdot),\sigma}(u):=\int_{\Omega}{\vert u(x)\vert^{p(x)}\,d\nu_\sigma(x)}:=\int_{\Omega}{\vert u(x)\vert^{p(x)}\sigma(x)\,dx}
 \end{align*}
 is finite, i.e., we have that
 $L^{p(\cdot)}(\Omega;\sigma):=\{u\in \mathcal{M}(\Omega)\mid
 \sigma^{1/p(\cdot)}u\in L^{p(\cdot)}(\Omega)\}$.~Then, we equip
 $L^{p(\cdot)}(\Omega;\sigma)$ with the Luxembourg norm
\begin{align*}
	\|u\|_{p(\cdot),\sigma}:=\inf\big\{\lambda > 0\mid \rho_{p(\cdot),\sigma}(u/\lambda)\leq 1\big\}\,,
\end{align*}
which turns $L^{p(\cdot)}(\Omega;\sigma)$ into a separable Banach
space. If $p\in \mathcal{P}^{\infty}(\Omega)$, in addition, satisfies
$p^->1$, then $L^{p(\cdot)}(\Omega;\sigma)$ is reflexive. The dual
space $(L^{p(\cdot)}(\Omega;\sigma))^*$ can be identified with
respect~to~$\skp{\cdot}{\cdot}$~with~$ L^{p'(\cdot)}(\Omega;\sigma')$,
where $\sigma':=\sigma^{\smash{\frac{-1}{p(\cdot)-1}}}$. These
properties, as many other basic properties of weighted variable
Lebesgue spaces, can be proved in the same way as for variable
Lebesgue spaces. This observation works for all results for which no
particular property of the Lebesgue measure is used that is not
shared by a Radon measure $\nu_\sigma$ (cf.~\cite{rubo}, \cite{lpx-book}).

The identity $\rho_{p(\cdot),\sigma}(u)=\rho_{p(\cdot)}(u\sigma^{1/p(\cdot)})$
implies that
\begin{gather*}
  \|u\|_{p(\cdot),\sigma}=\|u\sigma^{1/p(\cdot)}\|_{p(\cdot)}
\end{gather*}
for all $u\in L^{p(\cdot)}(\Omega;\sigma)$. This and H\"older's
inequality in variable Lebesgue spaces, for every
$u\in L^{p(\cdot)}(\Omega;\sigma)$ and
$v \in L^{p'(\cdot)}(\Omega;\sigma')$, where  $\sigma'=\sigma^{\smash{\frac{-1}{p(\cdot)-1}}}$, yields that
\begin{equation*}
  \bigabs{\skp{u}{v}}\le 2\,\norm{u}_{p(\cdot),\sigma} \norm{v}_{p'(\cdot),\sigma'}\,.
\end{equation*}

The relation between the modular and the norm is clarified by the
following lemma, which is called norm-modular unit ball property.
\begin{Lem}
  \label{lem:unit_ball_px}
  Let $\Omega\subseteq \R^d$, ${d\in
    \mathbb{N}}$,~be~open and let $p\in
  \mathcal{P}^{\infty}(\Omega)$. Then, we have for
  any $u \in \Lpwx$:
  \begin{enumerate}[(i)]
    \item $\norm{u}_{p(\cdot),\sigma} \leq 1$ if and only if   $\rho_{p(\cdot),\sigma} (u) \leq 1$. 
    \item \label{itm:unit_ball2pxa} If $\norm{u}_{p(\cdot),\sigma} \leq 1$, then
      $\rho _{p(\cdot),\sigma} (u) \leq
      \norm{u}_{p(\cdot),\sigma}$.\label{itm:unit_ball2pxb}
    \item If $ 1< \norm{u}_{p(\cdot),\sigma}$, then
      $\norm{u}_{p(\cdot),\sigma} \leq \rho _{p(\cdot),\sigma} (u)$.
    \item $\smash{\norm{u}_{p(\cdot),\sigma}^{p^-} -1 \le \rho _{p(\cdot),\sigma} (u) \leq
      \norm{u}_{p(\cdot),\sigma}^{p^+} +1}$.
  \end{enumerate}
\end{Lem}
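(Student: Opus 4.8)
The quickest route is to reduce the four assertions to the (by now classical) norm--modular unit ball property for the \emph{unweighted} variable exponent Lebesgue space $L^{p(\cdot)}(\Omega)$. Indeed, with $v:=u\,\sigma^{1/p(\cdot)}$ one has, by the identities recorded just above the lemma, simultaneously $\norm{u}_{p(\cdot),\sigma}=\norm{v}_{p(\cdot)}$ and $\rho_{p(\cdot),\sigma}(u)=\rho_{p(\cdot)}(v)$, so each of (i)--(iv) is a verbatim transcription of the corresponding statement for $v\in L^{p(\cdot)}(\Omega)$, which may be quoted from \cite{lpx-book} or \cite{CUF13}. Below I describe instead the self-contained argument one would give directly for the measure $\nu_\sigma$, in case the unweighted result is not invoked.

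First I would dispose of the degenerate case $u=0$ (then everything is trivial), and conversely check that $\norm{u}_{p(\cdot),\sigma}=0$ forces $u=0$: if $\rho_{p(\cdot),\sigma}(u/\lambda)\le 1$ for all $\lambda>0$, then letting $\lambda\downarrow 0$ and using monotone convergence with respect to $\nu_\sigma$ (here $p\ge 1$, so $|u/\lambda|^{p(x)}\uparrow+\infty$ on $\{u\neq0\}$) gives $\nu_\sigma(\{u\neq0\})=0$. Hence from now on $0<\norm{u}_{p(\cdot),\sigma}<\infty$.

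The heart of the matter, and the only point where the hypothesis $p^+<\infty$ is genuinely used, is the identity $\rho_{p(\cdot),\sigma}(u/\norm{u}_{p(\cdot),\sigma})=1$. To prove it I would note that $\mu\mapsto\rho_{p(\cdot),\sigma}(\mu u)=\int_\Omega\mu^{p(x)}|u(x)|^{p(x)}\,d\nu_\sigma(x)$ is non-decreasing on $[0,\infty)$, and that, since $1\le p^-\le p(x)\le p^+<\infty$ makes $\mu^{p(x)}$ bounded above and below by positive constants whenever $\mu$ ranges over a compact subset of $(0,\infty)$, this function is finite at one $\mu>0$ iff it is finite at every $\mu>0$, in which case it is continuous on $(0,\infty)$ by dominated convergence. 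As $\norm{u}_{p(\cdot),\sigma}<\infty$ there is some $\mu>0$ with $\rho_{p(\cdot),\sigma}(\mu u)\le 1$, so $\mu\mapsto\rho_{p(\cdot),\sigma}(\mu u)$ is finite and continuous throughout $(0,\infty)$; since $1/\norm{u}_{p(\cdot),\sigma}=\sup\{\mu>0:\rho_{p(\cdot),\sigma}(\mu u)\le 1\}$, continuity yields $\rho_{p(\cdot),\sigma}(u/\norm{u}_{p(\cdot),\sigma})\le 1$, while $\rho_{p(\cdot),\sigma}(\mu u)>1$ for $\mu>1/\norm{u}_{p(\cdot),\sigma}$ forces the reverse inequality.

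Granted this identity, (i)--(iv) reduce to elementary estimates: writing $\rho_{p(\cdot),\sigma}(u)=\int_\Omega\norm{u}_{p(\cdot),\sigma}^{p(x)}\,\bigabs{u/\norm{u}_{p(\cdot),\sigma}}^{p(x)}\,d\nu_\sigma$ and bounding $\norm{u}_{p(\cdot),\sigma}^{p(x)}$ above or below by $\norm{u}_{p(\cdot),\sigma}$, by $\norm{u}_{p(\cdot),\sigma}^{p^-}$, or by $\norm{u}_{p(\cdot),\sigma}^{p^+}$ — according to whether the norm is $\le 1$ or $>1$ and to whether an upper or a lower bound is wanted — and using $\rho_{p(\cdot),\sigma}(u/\norm{u}_{p(\cdot),\sigma})=1$, one obtains (ii), (iii) and (iv) at once (in (iv) the harmless $\pm1$ absorbs the case distinction). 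Finally (i) is immediate: $\rho_{p(\cdot),\sigma}(u)\le 1$ says precisely that $1$ lies in the set $\{\lambda>0:\rho_{p(\cdot),\sigma}(u/\lambda)\le 1\}$, and by monotonicity of $\lambda\mapsto\rho_{p(\cdot),\sigma}(u/\lambda)$ this is equivalent to $\norm{u}_{p(\cdot),\sigma}\le 1$. The main obstacle, as indicated, is exactly the attainment of the infimum defining the norm with modular value $1$ — i.e.\ ruling out a jump of $\mu\mapsto\rho_{p(\cdot),\sigma}(\mu u)$ from a finite value to $+\infty$ — and this is where $p^+<\infty$ enters; everything else is bookkeeping with the sign of $\log\norm{u}_{p(\cdot),\sigma}$.
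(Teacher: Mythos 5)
Your proposal is correct and follows essentially the same route as the paper: the paper's proof is simply the citation of \cite[Lem.~3.2.4 \& Lem.~3.2.5]{lpx-book}, justified by the identity $\norm{u}_{p(\cdot),\sigma}=\|u\sigma^{1/p(\cdot)}\|_{p(\cdot)}$ and $\rho_{p(\cdot),\sigma}(u)=\rho_{p(\cdot)}(u\sigma^{1/p(\cdot)})$ stated just before the lemma, which is exactly your reduction $v=u\sigma^{1/p(\cdot)}$. Your additional self-contained argument (continuity of $\mu\mapsto\rho_{p(\cdot),\sigma}(\mu u)$ for $p^+<\infty$, the identity $\rho_{p(\cdot),\sigma}(u/\norm{u}_{p(\cdot),\sigma})=1$, and the elementary scaling bounds) is also sound, but not needed beyond the quotation.
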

\begin{Bew}
	See \cite[Lem.~3.2.4 \& Lem.~3.2.5]{lpx-book}.
\end{Bew}

In order to define weighted variable exponent Sobolev spaces, in analogy with Assumption \ref{weight}, we make the following assumption.

\begin{Vss}\label{weightp(x)}
  Let $\Omega\subseteq\R^d$, $d\in \mathbb{N}$, be an open set and
  $p\in\mathcal{P}^{\infty}(\Omega)$.  The weight $\sigma$ is admissible,
  i.e., if a sequence ${(\varphi_n)_{n\in \mathbb{N}}\subseteq C^\infty(\Omega)}$ and
  ${\bv \in L^{p(\cdot)}(\Omega;\sigma)}$ satisfy
  ${\int_\Omega\vert\varphi_n(x)\vert^{p(x)}\sigma(x)\,dx\!\to\!  0}$
  $(n\!\to\! \infty)$ and
  ${\int_\Omega\vert\nabla\varphi_n(x)\!-\!\bv(x)\vert^{p(x)}\sigma(x)\,dx\!\to\!
     0}$ $(n\to\infty)$, then it follows that $\bv=\mathbf{0} $ in
  $L^{p(\cdot)}(\Omega;\sigma)$.
\end{Vss}
\begin{Bem}
		If $\sigma \in C^0(\Omega)$, then the same argumentation as in Remark~\ref{weightexamples}~(ii) shows that  Assumption~\ref{weightp(x)} is satisfied for every $p\in \mathcal{P}^{\infty}(\Omega)$.
\end{Bem}

For $\sigma$ satisfying Assumption~\ref{weightp(x)} 
and $p \in \mathcal{P}^{\infty}(\Omega)$, we introduce the norm
$$
\Vert u\Vert_{1,p(\cdot),\sigma}:=
\|u\|_{p(\cdot),\sigma}
+\|\nabla u\|_{p(\cdot),\sigma}\,,
$$ 
whenever the right-hand side is well-defined.  

\begin{Def}
	Let  $\Omega\subseteq\R^d$, $d\in \mathbb{N}$, be open and let Assumption~\ref{weightp(x)}~be~satisfied. Then, the weighted variable exponent Sobolev space
	$H^{1,p(\cdot)}(\Omega;\sigma)$ is defined as the completion of
	$\mathcal{V}_{p(\cdot),\sigma}\hspace*{-0.1em}:=\hspace*{-0.1em}\{u\hspace*{-0.1em}\in\hspace*{-0.1em} C^\infty(\Omega)\fdg
	\Vert u\Vert_{1,p(\cdot),\sigma}\hspace*{-0.1em}<\hspace*{-0.1em}\infty\}$ with~respect~to~${\Vert\cdot\Vert_{1,p(\cdot),\sigma}}$.
\end{Def}

In other words, $u \in H^{1,p(\cdot)}(\Omega;\sigma)$ if and
only if $u\in L^{p(\cdot)}(\Omega;\sigma)$ and there exists a function $\bv\in L^{p(\cdot)}(\Omega;\sigma)$ such that for
some sequence
$(\varphi_n)_{n\in \mathbb{N}}\subseteq C^\infty(\Omega)$ holds both
$\Io\vert\varphi_n-u\vert^{p(x)}\sigma\,dx\to 0$ $(n\to\infty)$ and
${\Io\vert\nabla\varphi_n-\bv\vert^{p(x)}\sigma\,dx\to 0}$~${(n\to\infty)}$. 
Assumption \ref{weightp(x)} implies that
$\bv $ is a uniquely defined function in $L^{p(\cdot)}(\Omega;\sigma)$~and~we, thus,  define
$\hat{\nabla}u:=\bv$. Note that $W^{1,p(\cdot)}(\Omega)=H^{1,p(\cdot)}(\Omega;\sigma)$ if $\sigma= 1$ a.e.~in $\Omega$ with ${\nabla u =\hat\nabla u}$~for~all~${u\in W^{1,p(\cdot)}(\Omega)}$. However, in
general, $\hat{\nabla}u$ and~the~usual~weak or distributional gradient
$\nabla u$~do~not~coincide. 
 Then, the space $H^{1,p(\cdot)}_0(\Omega;\sigma)$ is defined as the closure
 of $C_0^\infty(\Omega)$ with respect to the
 $\|\cdot\|_{1,p(\cdot),\sigma}$--norm.~If~${\sigma\in L^\infty(\Omega)}$, then
 $\smash{W^{1,p(\cdot)}_0(\Omega)\hookrightarrow
 H^{1,p(\cdot)}_0(\Omega;\sigma)}$ and ${\nabla u =\hat\nabla u}$ for
 all $\smash{u\in W^{1,p(\cdot)}_0(\Omega)}$, which is a consequence of
 \begin{gather*}
   \|v\|_{p(\cdot),\sigma}=\|v\sigma^{1/p(\cdot)}\|_{{p(\cdot)}}\leq 2\,
   \|\sigma\|_{\infty}^{1/p^-}\|v\|_{{p(\cdot)}} 
 \end{gather*}
 valid for every $v\in L^{p(\cdot)}(\Omega)$.

Another possible approach is to define the weighted variable Sobolev space $\Wpwx$ as
the set of all functions $u\in \Lpwx$ which posses a distributional gradient
${\nabla u \in\Lpwx}$. We equip $\Wpwx$ with the norm
$\Vert\cdot\Vert_{1,p,\sigma}$. As constant exponents are a particular
case we have that, in general, the space $\Wpwx$ need not to
be a Banach space (cf.~\cite{heinonen}). The space $\Wpwx$ is mostly
studied in the particular case that ${\sigma
^{\smash{\frac {-1}{p(\cdot)-1}}} \in L^1_\loc (\Omega)}$, which ensures that
$\Wpwx$ is a Banach space and that $\smash{\nabla u =\hat \nabla u}$
(cf.~\cite{KWZ10}, \cite{heinonen}). However, this
condition is again for our purposes too restrictive (cf.~Section
\ref{sec:E}). Thus, we~will~not~use~$\Wpwx$, but we will work with the
spaces $\Hpwx$.~Since~the~space~$\Hpwx$~is~even less studied (we are
only aware of~the~study~in~\cite{Su14}),~we~prove~its~basic~properties.
\begin{Sa}
  Let $\Omega\subseteq \mathbb{R}^d$, $d\in \mathbb{N}$, be an open
  set and let $p\in \mathcal{P}^{\infty}(\Omega)$ satisfy $p^->1$. Then, the space
  $H^{1,p(\cdot)}(\Omega;\sigma)$ is a separable and reflexive Banach
  space.
\end{Sa}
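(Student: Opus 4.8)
The plan is to realize $H^{1,p(\cdot)}(\Omega;\sigma)$ as a closed subspace of a finite product of weighted variable exponent Lebesgue spaces and then transfer separability and reflexivity from the latter. That $H^{1,p(\cdot)}(\Omega;\sigma)$ is a Banach space is essentially immediate: by construction it is the completion of the normed space $(\mathcal{V}_{p(\cdot),\sigma},\Vert\cdot\Vert_{1,p(\cdot),\sigma})$, and the completion of a normed space is complete. The only thing to check is that $\Vert\cdot\Vert_{1,p(\cdot),\sigma}$ really is a norm on $\mathcal{V}_{p(\cdot),\sigma}$, which follows from the fact that $\Vert\cdot\Vert_{p(\cdot),\sigma}$ is a norm on $L^{p(\cdot)}(\Omega;\sigma)$ (in particular $\Vert u\Vert_{p(\cdot),\sigma}=0$ forces $u=0$) together with the triangle inequality applied to the two summands.

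For separability and reflexivity, I would introduce the product space $X:=L^{p(\cdot)}(\Omega;\sigma)^{1+d}$, equipped e.g.\ with the norm $\Vert(v,\mathbf{w})\Vert_X:=\Vert v\Vert_{p(\cdot),\sigma}+\Vert\mathbf{w}\Vert_{p(\cdot),\sigma}$. Since $p\in\mathcal{P}^{\infty}(\Omega)$ with $p^->1$, the space $L^{p(\cdot)}(\Omega;\sigma)$ is a separable and reflexive Banach space (as recorded above; separability is unconditional, reflexivity uses $p^->1$, both following from the unweighted results via the isometry $u\mapsto\sigma^{1/p(\cdot)}u$), hence so is the finite product $X$. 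Next I would define
\[
\iota\colon H^{1,p(\cdot)}(\Omega;\sigma)\to X,\qquad \iota(u):=(u,\hat{\nabla}u).
\]
By Assumption~\ref{weightp(x)}, the generalized gradient $\hat{\nabla}u$ is a uniquely determined element of $L^{p(\cdot)}(\Omega;\sigma)^{d}$ for every $u\in H^{1,p(\cdot)}(\Omega;\sigma)$, so $\iota$ is well-defined; it is plainly linear, and directly from the definition of $\Vert\cdot\Vert_{1,p(\cdot),\sigma}$ it is an isometry, $\Vert\iota(u)\Vert_X=\Vert u\Vert_{1,p(\cdot),\sigma}$.

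Once this is set up, the conclusion is routine: $H^{1,p(\cdot)}(\Omega;\sigma)$ is complete, being a completion, so its isometric image $\iota(H^{1,p(\cdot)}(\Omega;\sigma))$ is a complete, hence closed, subspace of $X$. A closed subspace of a separable Banach space is separable, and a closed subspace of a reflexive Banach space is reflexive; therefore $\iota(H^{1,p(\cdot)}(\Omega;\sigma))$ is separable and reflexive, and since $\iota$ is an isometric isomorphism onto its image, the same holds for $H^{1,p(\cdot)}(\Omega;\sigma)$ itself. The one genuinely delicate point in this scheme is the well-definedness and isometry property of $\iota$ on the \emph{completion} rather than on $\mathcal{V}_{p(\cdot),\sigma}$, i.e.\ the independence of $\hat{\nabla}u$ of the chosen approximating sequence; this is precisely what Assumption~\ref{weightp(x)} delivers, since the difference of two admissible approximating sequences tends to $0$ in $L^{p(\cdot)}(\Omega;\sigma)$ while the difference of their gradients converges in $L^{p(\cdot)}(\Omega;\sigma)$, forcing the limiting gradient to vanish. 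Everything else is the standard ``closed subspace of a nice product space'' argument.
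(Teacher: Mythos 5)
Your proposal is correct and follows essentially the same route as the paper: both realize $H^{1,p(\cdot)}(\Omega;\sigma)$ isometrically inside $L^{p(\cdot)}(\Omega;\sigma)^{d+1}$ via $u\mapsto(u,\hat\nabla u)$ (well-defined thanks to Assumption~\ref{weightp(x)}) and transfer separability and reflexivity to the closed image. The only cosmetic difference is that the paper explicitly verifies the norm identity $\Vert u\Vert_{1,p(\cdot),\sigma}=\Vert u\Vert_{p(\cdot),\sigma}+\Vert\hat\nabla u\Vert_{p(\cdot),\sigma}$ on the completion by passing to the limit along approximating sequences, a step you state as following "directly from the definition" but which is covered by the same limiting argument you invoke for well-definedness of $\hat\nabla u$.
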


\begin{proof}
  The space $H^{1,p(\cdot)}(\Omega;\sigma)$, by
  definition, is a Banach space. So, it is left to check that it is separable and
  reflexive. For this, we first note that
  \begin{align}
    \|u\|_{1,p(\cdot),\sigma}=\|u\|_{p(\cdot),\sigma}+\|\hat\nabla u\|_{p(\cdot),\sigma}\label{isometry}
  \end{align}
  for all $u\in H^{1,p(\cdot)}(\Omega;\sigma)$. In fact, for any
  $u\in H^{1,p(\cdot)}(\Omega;\sigma)$, by definition, there exists a
  sequence
  $(\varphi_n)_{n\in
    \mathbb{N}}\subseteq\mathcal{V}_{p(\cdot),\sigma}$ such that
  $\varphi_n\to u$ in $L^{p(\cdot)}(\Omega;\sigma)$ $(n\to \infty)$,
  $\nabla\varphi_n\to \hat\nabla u$ in $L^{p(\cdot)}(\Omega;\sigma)$
  $(n\to \infty)$ and
  $\|\varphi_n\|_{1,p(\cdot),\sigma}= \|\varphi_n\|_{p(\cdot),\sigma}
  +\|\nabla\varphi_n\|_{p(\cdot),\sigma}$, $n\in \mathbb{N}$. Thus, by
  passing for $n\to \infty$, we
  obtain~\eqref{isometry}~for~all~${u\in
    H^{1,p(\cdot)}(\Omega;\sigma)}$.  The equality \eqref{isometry} in
  turn implies that
  ${\Pi: H^{1,p(\cdot)}(\Omega;\sigma)\to
    L^{p(\cdot)}(\Omega;\sigma)^{d+1}}$, defined via
  $\Pi u:=(u,\hat\nabla u)^\top $ in
  $L^{p(\cdot)}(\Omega;\sigma)^{d+1}$ for every
  $u\in H^{1,p(\cdot)}(\Omega;\sigma)$, is an isometry. In particular,
  $\Pi$ is an isometric isomorphism from $H^{1,p(\cdot)}(\Omega;\sigma)$~onto~its~range~$R(\Pi)$. 
  Thus, $R(\Pi)$ inherits the separability and reflexivity of
  $L^{p(\cdot)}(\Omega;\sigma)^{d+1}$ and, by virtue of the
  isometric~isomorphism,~$H^{1,p(\cdot)}(\Omega;\sigma)$~as well.
 \end{proof}


\subsection{$\log$--H\"older continuity and related results}\label{sec:log}

We say that a bounded exponent $p\in \mathcal P^\infty (G)$ is locally
$\log$--Hölder continuous, if there is a constant $c_1>0$ such that
for all $x,y\in G$
\begin{align*}
	\vert p(x)-p(y)\vert \leq \frac{c_1}{\log(e+1/\vert x-y\vert)}\,.
\end{align*}
We say that $p \in \mathcal P^\infty (G)$ satisfies the $\log$--Hölder decay condition, if there exist 
constants $c_2>0$ and $p_\infty\in \setR$ such that for all $x\in G$
\begin{align*}
	\vert p(x)-p_\infty\vert \leq\frac{c_2}{\log(e+1/\vert x\vert)}\,.
\end{align*} 
The exponent $p$ is called globally $\log$--Hölder continuous on $G$, if it is locally 
$\log$--Hölder continuous and satisfies the $\log$--Hölder decay condition. 
The maximum $c_{\log}(p):=\max\{c_1,c_2\}$ is just called the $\log$--Hölder constant of $p$.
Furthermore, we denote by $\mathcal{P}^{\log}(G)$ the set of 
globally $\log$--Hölder continuous
functions on $G$. 

$\log$--Hölder continuity  is a special modulus of continuity for variable exponents that is sufficient for the validity of the following results.

\begin{Sa}\label{bogp(x)}
  Let $G\subseteq \mathbb{R}^d\!$, $d\ge 2$, be a bounded Lipschitz
  domain. Then, there exists a linear operator
  $\mathcal{B}_G:C^\infty_{0,0}(G)\to C^{\infty}_{0}(G)$ which for all
  exponents $\smash {p\in \mathcal{P}^{\log}(G)}$ satisfying $p^->1$
  extends uniquely to a linear, bounded operator
  ${\mathcal{B}_G:L^{p(\cdot)}_0(G)\to W^{1,p(\cdot)}_0(G)}$ such that
  $\|\mathcal{B}_Gu\|_{1,p(\cdot)}\leq c\,\|u\|_{p(\cdot)}$ and
  $\textup{div}\,\mathcal{B}_Gu = u$ for every
  $\smash{u\in L^{p(\cdot)}_0(G)}$.
\end{Sa}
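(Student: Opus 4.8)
The plan is to realize $\mathcal{B}_G$ as the classical Bogovskii operator and to control it via the mapping properties of Calder\'on--Zygmund singular integrals and Riesz potentials in variable exponent Lebesgue spaces. Since $G$ is a bounded Lipschitz domain, it admits a finite covering $G=\bigcup_{k=1}^{N}G_k$ in which every $G_k$ is star-shaped with respect to an open ball $B_k\subset\subset G_k$. By a standard patching argument (cf.~\cite{rubo}, \cite{lpx-book}) one reduces to the star-shaped pieces: a given $u\in C^\infty_{0,0}(G)$ is written as $u=\sum_{k=1}^{N}u_k$ with $u_k\in C^\infty_{0,0}(G_k)$, the decomposition being a composition of multiplications by fixed cut-off functions and subtractions of fixed correction terms, hence bounded on every $L^{p(\cdot)}$; the partial operators are then summed, giving $\mathcal{B}_G\colon C^\infty_{0,0}(G)\to C^\infty_0(G)$ with $\textup{div}\,\mathcal{B}_G u=u$. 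Observe that this fixes $\mathcal{B}_G$ once and for all, independently of $p$.

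On a star-shaped $G_k$ the operator has the explicit representation
\[
  (\mathcal{B}_{G_k}u)(x)=\int_{G_k}u(y)\,\frac{x-y}{|x-y|^{d}}\int_{|x-y|}^{\infty}\omega\Bigl(y+r\,\tfrac{x-y}{|x-y|}\Bigr)r^{d-1}\,dr\,dy\,,
\]
with $\omega\in C^\infty_0(B_k)$, $\int_{B_k}\omega\,dx=1$. Differentiation under the integral yields the well-known decomposition $\partial_i(\mathcal{B}_{G_k}u)_j=T_{ij}u+R_{ij}u$, where $T_{ij}$ is a Calder\'on--Zygmund singular integral operator (kernel homogeneous of degree $-d$ with the usual cancellation) and $R_{ij}$ has a kernel that is weakly singular of order $-(d-1)$ on the bounded set $G_k$; likewise $\mathcal{B}_{G_k}u$ itself is given by a weakly singular kernel of order $-(d-1)$ and is therefore pointwise dominated by $c\,I_1(|u|)$, with $I_1$ the Riesz potential of order $1$. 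The main obstacle is the boundedness of the operators $T_{ij}$ on $L^{p(\cdot)}(G_k)$, and this is exactly where the hypotheses enter: for $p\in\mathcal{P}^{\log}(G)$ with $1<p^-\le p^+<\infty$ (the bound $p^+<\infty$ being automatic, since $\mathcal{P}^{\log}(G)\subseteq\mathcal{P}^{\infty}(G)$), Calder\'on--Zygmund operators are bounded on $L^{p(\cdot)}$, with norm depending only on $d$, $p^\pm$ and $c_{\log}(p)$ (cf.~\cite{lpx-book}); the weakly singular operators $R_{ij}$ and the Riesz potential $I_1$, restricted to the bounded set $G_k$, are likewise bounded on $L^{p(\cdot)}$ (in fact they improve the exponent; cf.~\cite{lpx-book}).

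Combining these estimates over $k=1,\dots,N$ and using the $L^{p(\cdot)}$--boundedness of the decomposition $u\mapsto(u_1,\dots,u_N)$, one arrives at
\[
  \|\mathcal{B}_G u\|_{1,p(\cdot)}=\|\mathcal{B}_G u\|_{p(\cdot)}+\|\nabla\mathcal{B}_G u\|_{p(\cdot)}\le c\,\|u\|_{p(\cdot)}
\]
for all $u\in C^\infty_{0,0}(G)$, with $c=c(d,G,p^-,p^+,c_{\log}(p))>0$. Finally, since $p^+<\infty$, the set $C^\infty_{0,0}(G)$ is dense in $L^{p(\cdot)}_0(G)$: approximate $u\in L^{p(\cdot)}_0(G)$ in $L^{p(\cdot)}$ by $\varphi_n\in C^\infty_0(G)$ and replace $\varphi_n$ by $\varphi_n-\bigl(\int_G\varphi_n\,dx\bigr)\psi$, where $\psi\in C^\infty_0(G)$ is fixed with $\int_G\psi\,dx=1$. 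Hence $\mathcal{B}_G$ extends uniquely, by continuity, to a bounded linear operator $L^{p(\cdot)}_0(G)\to W^{1,p(\cdot)}_0(G)$, and since $\textup{div}\colon W^{1,p(\cdot)}_0(G)\to L^{p(\cdot)}(G)$ is continuous while $\textup{div}\,\mathcal{B}_G u=u$ holds on the dense subspace, the identity persists on all of $L^{p(\cdot)}_0(G)$. The essential difficulty is thus the variable-exponent Calder\'on--Zygmund estimate, which is precisely what forces the $\log$--H\"older assumption; the rest is a routine assembly of classical facts.
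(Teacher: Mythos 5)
Your outline is correct and is essentially the argument behind the references the paper cites for this theorem (\cite{dr-nta}, \cite[Thm.~6.4]{dr-calderon}, \cite[Thm.~14.3.15]{lpx-book}); the paper itself gives no proof beyond these citations. The reduction to star-shaped pieces, the explicit Bogovskii representation, the splitting of the gradient into a Calder\'on--Zygmund part plus weakly singular remainders (plus the harmless pointwise multiplication term you suppress), the $\log$--H\"older--based $L^{p(\cdot)}$ bounds for these operators, and the density/continuity extension to $L^{p(\cdot)}_0(G)$ are exactly how the cited proofs proceed.
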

\begin{Bew}
  See \cite[Thm.~2.2]{dr-nta}, \cite[Thm.~6.4]{dr-calderon}, \cite[Thm.~14.3.15]{lpx-book}.
\end{Bew}

\begin{Sa}\label{poincarep(x)}
  Let $G\subseteq \mathbb{R}^d$, $d\in \mathbb{N}$, be a bounded
  Lipschitz domain~and let $\smash{p \in \mathcal{P}^{\log}(G)}$ satisfy
  $p^->1$.  Then, there exists a constant $c >0$ such that
  ${\| \bfu\|_{p(\cdot)}\leq c\, \|\nabla \bfu\|_{p(\cdot)}}$
	for~every~$\smash{\bfu\in W^{1,p(\cdot)}_0(G)}$.
\end{Sa}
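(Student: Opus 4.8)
The plan is to combine the classical pointwise estimate of a $W^{1,1}$--function by the Riesz potential of its gradient with the boundedness of the Hardy--Littlewood maximal operator on variable Lebesgue spaces; the $\log$--H\"older hypothesis enters exactly through the latter. As a preliminary step I would extend the exponent: by the standard extension theorem for globally $\log$--H\"older continuous functions (cf.~\cite{lpx-book}), $p\in\mathcal P^{\log}(G)$ admits an extension $\widetilde p\in\mathcal P^{\log}(\R^d)$ with $\widetilde p^-=p^-$, $\widetilde p^+=p^+$ and $\log$--H\"older constant controlled by $c_{\log}(p)$. Since every $\bfu\in W^{1,p(\cdot)}_0(G)$, extended by zero, lies in $W^{1,\widetilde p(\cdot)}(\R^d)$ and has the same Luxembourg norms over $G$ and over $\R^d$, it suffices to prove the inequality for $\widetilde p$ on $\R^d$; I write $p$ again for $\widetilde p$.

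Next, fix $\bfu\in C_0^\infty(G)$ and extend it by zero to $\R^d$. Integrating along rays gives the classical bound
\begin{align*}
  |\bfu(x)|\le c(d)\int_{\R^d}\frac{|\nabla \bfu(y)|}{|x-y|^{d-1}}\,dy\qquad\text{for all }x\in\R^d\,.
\end{align*}
Because $\operatorname{supp}\nabla\bfu\subseteq\overline G$ and $R:=\operatorname{diam}G<\infty$, the integral effectively runs over $B(x,R)$ only; decomposing this ball dyadically into the annuli $\{2^{-k-1}R\le|x-y|<2^{-k}R\}$, $k\ge 0$, bounding the integral over each annulus by $(2^{-k-1}R)^{1-d}\int_{B(x,2^{-k}R)}|\nabla\bfu|$, and summing the resulting geometric series, one obtains
\begin{align*}
  |\bfu(x)|\le c(d)\,R\,M(|\nabla\bfu|)(x)\qquad\text{for all }x\in\R^d\,,
\end{align*}
where $M$ denotes the Hardy--Littlewood maximal operator.

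Now I would invoke the boundedness of $M$ on $L^{p(\cdot)}(\R^d)$, valid since $p\in\mathcal P^{\log}(\R^d)$ and $1<p^-\le p^+<\infty$ (cf.~\cite[Thm.~4.3.8]{lpx-book}). Taking $\|\cdot\|_{p(\cdot)}$--norms in the pointwise estimate and using that $\bfu$ and $\nabla\bfu$ vanish outside $G$, we get
\begin{align*}
  \|\bfu\|_{p(\cdot)}=\|\bfu\|_{L^{p(\cdot)}(\R^d)}\le c(d)\,R\,\|M(|\nabla\bfu|)\|_{L^{p(\cdot)}(\R^d)}\le c\,\|\nabla\bfu\|_{L^{p(\cdot)}(\R^d)}=c\,\|\nabla\bfu\|_{p(\cdot)}\,,
\end{align*}
with $c=c(d,p^-,p^+,c_{\log}(p),R)$. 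Since, by definition, $C_0^\infty(G)$ is dense in $W^{1,p(\cdot)}_0(G)$ with respect to $\|\nabla\cdot\|_{p(\cdot)}$ and the above inequality shows that an approximating sequence is then also Cauchy in $\|\cdot\|_{p(\cdot)}$, the estimate passes to all $\bfu\in W^{1,p(\cdot)}_0(G)$.

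The only genuinely delicate point is the maximal--function step: it is precisely here that $\log$--H\"older continuity is indispensable (mere boundedness of the exponent does not suffice), and it is what necessitates the preliminary extension of $p$ to all of $\R^d$; everything else is elementary. As an alternative, one could simply cite \cite{lpx-book}, where this Poincar\'e inequality is established along exactly these lines.
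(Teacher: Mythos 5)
Your proof is correct: the pointwise Riesz--potential bound, the dyadic reduction to $c(d)\,R\,M(|\nabla\bfu|)$, the maximal theorem for $p\in\mathcal P^{\log}(\setR^d)$ with $p^->1$ (after the log--H\"older extension of the exponent), and the final density step match the definition of $W^{1,p(\cdot)}_0(G)$ used here, and the only cosmetic point is that the pointwise estimate is needed (and derived) only for $x\in G$, which suffices since $\bfu$ vanishes elsewhere. The paper itself offers no argument but simply cites \cite[Thm.~8.2.4]{lpx-book}, whose proof proceeds along exactly the lines you wrote, so your proposal is essentially the paper's (cited) approach made explicit.
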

\begin{Bew}
	See \cite[Thm.~8.2.4]{lpx-book}.
\end{Bew}

\begin{Sa}\label{kornp(x)}
  Let $G\subseteq \mathbb{R}^d\!$, $d\in \mathbb{N}$, be a bounded
  Lipschitz domain and let $\smash{p\in\mathcal{P}^{\log}(G)}$ satisfy
    $p^- > 1$.  Then, there exists a constant $c>0$ such that
  ${ \|\nabla \bfu\|_{p(\cdot)}\leq c\, \|\bfD \bfu\|_{p(\cdot)}}$
	for~every~$\smash{\bfu\in W^{1,p(\cdot)}_0(G)}$.
\end{Sa}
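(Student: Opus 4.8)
The plan is to reduce the estimate, via zero extension, to a Fourier-multiplier identity on $\R^d$ that expresses $\nabla\bfu$ through $\bfD\bfu$ by compositions of Riesz transforms, and then to invoke the boundedness of Calderón--Zygmund operators on variable exponent Lebesgue spaces — which is precisely the point where $\log$--Hölder continuity enters. \textbf{First}, since $W^{1,p(\cdot)}_0(G)$ is by definition the closure of $C_0^\infty(G)$ with respect to $\|\nabla\cdot\|_{p(\cdot)}$ and since $\|\bfD\bfu\|_{p(\cdot)}\le\|\nabla\bfu\|_{p(\cdot)}$, it suffices to prove the inequality for $\bfu\in C_0^\infty(G)$ and then pass to the limit (by Theorem~\ref{poincarep(x)} the norm $\|\nabla\cdot\|_{p(\cdot)}$ is in any case equivalent to $\|\cdot\|_{1,p(\cdot)}$ on $W^{1,p(\cdot)}_0(G)$, so this is a genuine density argument).

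\textbf{Next}, fix $\bfu\in C_0^\infty(G)$ and let $\widetilde\bfu\in C_0^\infty(\R^d)$ be its zero extension. I would extend $p$ to an exponent $\widetilde p\in\mathcal{P}^{\log}(\R^d)$ with $\widetilde p^-=p^->1$ and $\widetilde p^+=p^+<\infty$ (such an extension exists, cf.~\cite{lpx-book}); since $\widetilde\bfu$ and $\bfD\widetilde\bfu$ are supported in $\overline G$ and $\widetilde p|_G=p$, one has $\|\nabla\widetilde\bfu\|_{L^{\widetilde p(\cdot)}(\R^d)}=\|\nabla\bfu\|_{L^{p(\cdot)}(G)}$ and likewise for $\bfD$. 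For $\widetilde\bfu\in C_0^\infty(\R^d)$ the classical second-order Korn identity $\partial_j\partial_k\widetilde u_i=\partial_k(\bfD\widetilde\bfu)_{ij}+\partial_j(\bfD\widetilde\bfu)_{ik}-\partial_i(\bfD\widetilde\bfu)_{jk}$ gives, after applying $\Delta^{-1}$ and then $\partial_j$,
\begin{align*}
  \partial_j\widetilde u_i=-\sum_{k=1}^{d}\big(2\,R_jR_k(\bfD\widetilde\bfu)_{ik}-R_jR_i(\bfD\widetilde\bfu)_{kk}\big)\,,
\end{align*}
where $R_m$ denotes the $m$-th Riesz transform (a trace check over $i=j$ reproduces $\divo\widetilde\bfu$, which confirms the coefficients).

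\textbf{Then}, since the compositions $R_jR_k$ are Calderón--Zygmund operators and $\widetilde p\in\mathcal{P}^{\log}(\R^d)$ satisfies $1<\widetilde p^-\le\widetilde p^+<\infty$, each $R_jR_k$ is bounded on $L^{\widetilde p(\cdot)}(\R^d)$ (cf.~\cite{lpx-book},~\cite{CUF13}). Applying this to the identity above and summing over $i,j$ yields a constant $c=c(d,p,G)>0$ with $\|\nabla\widetilde\bfu\|_{L^{\widetilde p(\cdot)}(\R^d)}\le c\,\|\bfD\widetilde\bfu\|_{L^{\widetilde p(\cdot)}(\R^d)}$. Combined with the two norm identities of the previous step this gives $\|\nabla\bfu\|_{L^{p(\cdot)}(G)}\le c\,\|\bfD\bfu\|_{L^{p(\cdot)}(G)}$ for every $\bfu\in C_0^\infty(G)$, and hence for every $\bfu\in W^{1,p(\cdot)}_0(G)$ by density.

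The only non-elementary ingredient — and the step I expect to be the genuine obstacle — is the $L^{p(\cdot)}(\R^d)$-boundedness of Calderón--Zygmund operators: this is the deep fact whose natural (and essentially optimal) hypothesis is exactly $\log$--Hölder continuity of the exponent, and it is what makes the assumption $p\in\mathcal{P}^{\log}(G)$ indispensable in the statement; everything else (exponent extension, zero extension in $W^{1,p(\cdot)}_0$, density) is routine bookkeeping. An alternative route that avoids the explicit singular-integral representation is to apply, componentwise to $\partial_j u_i$, the variable exponent Nečas inequality — which is, via Theorem~\ref{bogp(x)}, equivalent to the existence of a bounded right inverse of the divergence — together with the same identity $\partial_j\partial_k u_i=\partial_k(\bfD u)_{ij}+\partial_j(\bfD u)_{ik}-\partial_i(\bfD u)_{jk}$ to obtain $\|\nabla u\|_{p(\cdot)}\le c(\|u\|_{p(\cdot)}+\|\bfD u\|_{p(\cdot)})$, and then remove the lower-order term on $W^{1,p(\cdot)}_0(G)$ by a contradiction argument using the compact embedding $W^{1,p(\cdot)}_0(G)\hookrightarrow\hookrightarrow L^{p(\cdot)}(G)$ and the fact that $\bfD u=\mathbf 0$ forces $u=\mathbf 0$ for $u\in W^{1,p(\cdot)}_0(G)$.
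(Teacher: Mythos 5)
Your argument is correct: the Riesz-transform representation $\partial_j\widetilde u_i=-\sum_k\big(2R_jR_k(\bfD\widetilde\bfu)_{ik}-R_jR_i(\bfD\widetilde\bfu)_{kk}\big)$ checks out on the Fourier side, and together with the log--H\"older extension of the exponent, the zero-extension norm identities and the boundedness of Calder\'on--Zygmund operators on $L^{\widetilde p(\cdot)}(\R^d)$, the density argument closes the proof. The paper itself offers no proof but only cites \cite{dr-calderon} and \cite[Thm.~14.3.21]{lpx-book}, and your route is essentially the one carried out in \cite{dr-calderon} (Korn's inequality via singular integrals on variable exponent spaces), so you have in effect reconstructed the proof behind the citation.
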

\begin{Bew}
	See \cite[Thm.~5.5]{dr-calderon}, \cite[Thm.~14.3.21]{lpx-book}.
\end{Bew}

\begin{Sa}\label{thm:Ltp(x)}Let $G\subseteq \mathbb{R}^d$,
$d\in\mathbb{N}$, be a bounded
Lipschitz~domain,~${p\in\mathcal{P}^{\log}(G)}$ with $p^->1$ and
	let $\bfu^n \in W^{1,p(\cdot)}_0(G)$ be such that
	$\bfu^n \weakto \bfzero$ in $W^{1,p(\cdot)}_0(G)$ $(n\to \infty)$.  Then,
	for any ${j, n\hspace*{-0.1em}\in\hspace*{-0.1em} \setN}$, there
	exist
	$\bfu^{n,j}\hspace*{-0.2em} \in\hspace*{-0.1em} W^{1,\infty}_0(G)$
	and~$\smash{\lambda_{n,j}\hspace*{-0.1em} \in \hspace*{-0.1em}\big
		[2^{2^j}\hspace*{-0.1em}, 2^{2^{j+1}}\big ]} $~such~that
	\begin{align*}
			\smash{ \lim_{n\to \infty}} \big ( {\sup}_{j \in \setN}
			\norm{\bfu^{n,j}}_{\infty}\big ) &=0\,,\\
			\norm{\nabla \bfu^{n,j}}_{\infty} &\leq c\, \lambda_{n,j}\leq c\,
			\smash{2^{2^{j+1}}}\,,
			\\
			\bignorm{\nabla \bfu^{n,j}\, \chi_{
					\set{\bfu^{n,j} \not= \bfu^n}}}_{p(\cdot)} &\leq c\, \big\| \lambda_{n,j} \chi_{\set{\bfu^{n,j} \not= \bfu^n}}\big\|_{p(\cdot)} \,,
			\\[-1mm]
			\smash{\limsup _{n \to \infty} }\,\big\| \lambda_{n,j} \chi_{\set{\bfu^{n,j} \not= \bfu^n}}\big\|_{p(\cdot)} &\leq c\, 2^{-j/p^+}\,,
	\end{align*}
	where $c=c(d,p,G)>0$.  Moreover, for any $j \in
	\setN$, $\nabla \bfu^{n,j} \weakto \bfzero $~in~$L^s(G) $~${(n \to
		\infty)}$,
	$s \in [1,\infty)$, and $\nabla \bfu^{n,j} \stackrel{*}{\weakto}
	\bfzero$ in $L^\infty(G)$ $(n \to
	\infty)$. 
\end{Sa}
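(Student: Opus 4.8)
The plan is to retrace the proof of the constant‑exponent statement Theorem~\ref{thm:Lt} (i.e., \cite[Theorem~2.5]{dms}), localising all exponent‑dependent estimates at the level of the modular $\rho_{p(\cdot)}$. The truncations $\bfu^{n,j}$ are produced by the usual exponent‑free recipe: one forms the open superlevel set $\mathcal{O}_{n,j}:=\{\,M(\abs{\nabla\bfu^n}\chi_G)>\lambda_{n,j}\,\}$ of the Hardy--Littlewood maximal operator $M$, takes a Whitney covering with a subordinate partition of unity, and replaces $\bfu^n$ by local weighted averages on $\mathcal{O}_{n,j}$. Since $G$ is bounded, $W^{1,p(\cdot)}_0(G)\hookrightarrow W^{1,p^-}_0(G)$, so $\bfu^n\weakto\bfzero$ there and, by Rellich, $\bfu^n\to\bfzero$ in $L^{p^-}(G)$. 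Provided $\lambda_{n,j}\in[2^{2^j},2^{2^{j+1}}]$ is picked so as to meet the selection criterion of \cite{dms} for the exponent $p^-$, the construction itself, the first two displayed estimates, the inclusion $\{\bfu^{n,j}\neq\bfu^n\}\subseteq\mathcal{O}_{n,j}$ (up to a null set), and the weak(‑$*$) convergences $\nabla\bfu^{n,j}\weakto\bfzero$ in $L^s(G)$ and in $L^\infty(G)$ are word for word those of \cite{dms}. Only the choice of $\lambda_{n,j}$ and the two estimates involving $\norm{\cdot}_{p(\cdot)}$ need a new argument.

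By Poincar\'e's inequality (Theorem~\ref{poincarep(x)}) and $\bfu^n\weakto\bfzero$ we have $\sup_n\norm{\nabla\bfu^n}_{p(\cdot)}<\infty$; since $p\in\mathcal{P}^{\log}(G)$ with $p^->1$ (extend $p$ to a globally $\log$‑H\"older exponent on $\setR^d$, which leaves $p^-$ and $p^+$ unchanged), $M$ is bounded on $L^{p(\cdot)}(\setR^d)$, so $\norm{w_n}_{p(\cdot)}$ is bounded uniformly in $n$, where $w_n:=M(\abs{\nabla\bfu^n}\chi_G)$, and hence, by Lemma~\ref{lem:unit_ball_px}, $\sup_n\rho_{p(\cdot)}(w_n)=:C_1<\infty$. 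Fix $n,j$ and set, for $l\ge 1$,
\begin{align*}
e_l:=\int_{G\cap\{2^l<w_n\le 2^{l+1}\}}w_n^{\,p(x)}\,dx\,,\qquad f_k:=\sum_{l\ge k}2^{(k-l)p^-}e_l\,.
\end{align*}
Since $\sum_l e_l\le\rho_{p(\cdot)}(w_n)\le C_1$, a geometric‑series estimate gives $\sum_{k=2^j}^{2^{j+1}-1}f_k\le C_1/(1-2^{-p^-})$; as this sum has $2^j$ terms, some $k=k(n,j)$ in that range satisfies $f_{k(n,j)}\le C_1\,2^{-j}/(1-2^{-p^-})$, and we put $\lambda_{n,j}:=2^{k(n,j)}\in[2^{2^j},2^{2^{j+1}}]$. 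Because $p(\cdot)\ge p^-$ on $\{w_n>1\}$, this $\lambda_{n,j}$ a fortiori controls $\lambda_{n,j}^{\,p^-}\abs{\mathcal{O}_{n,j}}$ and the $p^-$‑energy above level $\lambda_{n,j}$, hence meets the \cite{dms} selection criterion, and the exponent‑independent conclusions above hold with this choice.

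The third estimate is then immediate: by the second one $\abs{\nabla\bfu^{n,j}}\,\chi_{\{\bfu^{n,j}\neq\bfu^n\}}\le c\,\lambda_{n,j}\,\chi_{\{\bfu^{n,j}\neq\bfu^n\}}$ a.e., so monotonicity of the Luxembourg norm gives $\norm{\nabla\bfu^{n,j}\,\chi_{\{\bfu^{n,j}\neq\bfu^n\}}}_{p(\cdot)}\le c\,\norm{\lambda_{n,j}\,\chi_{\{\bfu^{n,j}\neq\bfu^n\}}}_{p(\cdot)}$. For the fourth, write $\mathcal{O}_{n,j}=\bigcup_{l\ge k(n,j)}\{2^l<w_n\le 2^{l+1}\}$ (mod a null set); on $\{2^l<w_n\le 2^{l+1}\}$ with $l\ge k(n,j)$ one has, using $l\ge k(n,j)$ and $p(x)\ge p^-$, $\lambda_{n,j}^{\,p(x)}=2^{k(n,j)p(x)}\le 2^{(k(n,j)-l)p^-}\,2^{lp(x)}\le 2^{(k(n,j)-l)p^-}\,w_n^{\,p(x)}$, so, since $\{\bfu^{n,j}\neq\bfu^n\}\subseteq\mathcal{O}_{n,j}$,
\begin{align*}
\rho_{p(\cdot)}\big(\lambda_{n,j}\,\chi_{\{\bfu^{n,j}\neq\bfu^n\}}\big)\le\int_{G\cap\mathcal{O}_{n,j}}\!\lambda_{n,j}^{\,p(x)}\,dx\le\sum_{l\ge k(n,j)}2^{(k(n,j)-l)p^-}e_l=f_{k(n,j)}\le\frac{C_1}{1-2^{-p^-}}\,2^{-j}\,.
\end{align*}
For all $j$ large enough that the right‑hand side is $\le 1$, put $\mu:=\big(C_1\,2^{-j}/(1-2^{-p^-})\big)^{1/p^+}\le 1$; then $\mu^{-p(x)}\le\mu^{-p^+}$, so $\rho_{p(\cdot)}(\mu^{-1}\lambda_{n,j}\chi_{\{\bfu^{n,j}\neq\bfu^n\}})\le\mu^{-p^+}\rho_{p(\cdot)}(\lambda_{n,j}\chi_{\{\bfu^{n,j}\neq\bfu^n\}})\le 1$, whence Lemma~\ref{lem:unit_ball_px} gives $\norm{\lambda_{n,j}\chi_{\{\bfu^{n,j}\neq\bfu^n\}}}_{p(\cdot)}\le\mu=c\,2^{-j/p^+}$; the finitely many remaining $j$ are absorbed into the constant via Lemma~\ref{lem:unit_ball_px}(iv). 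Taking $\limsup_{n\to\infty}$ gives the asserted bound.

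I expect the only substantive obstacle to be the interplay between the dyadic scale $\lambda_{n,j}$, which may be as large as $2^{2^{j+1}}$, and the variable exponent: one cannot compare $\rho_{p(\cdot)}$‑quantities with powers of a fixed exponent without the divergent factor $\lambda_{n,j}^{\,p^+-p^-}$, so the pigeonhole has to be run on the weighted sums $f_k$ of annular \emph{modulars} and the passage from modular to norm has to be done by hand. This is precisely what forces the use of the boundedness of $M$ on $L^{p(\cdot)}$ (hence of global $\log$‑H\"older continuity, the one genuinely variable‑exponent ingredient) and of the unit ball property, and what produces the exponent $1/p^+$ in the statement; everything else is a transcription of \cite{dms}.
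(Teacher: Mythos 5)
The paper gives no argument of its own for Theorem~\ref{thm:Ltp(x)}: its proof is the citation of \cite[Thm.~4.4]{dms} and \cite[Cor.~9.5.2]{lpx-book}. What you have written is, in substance, a correct reconstruction of the argument behind that citation: the Whitney/maximal-function construction is exponent-free; the genuinely variable-exponent inputs are the boundedness of $M$ on $L^{p(\cdot)}(\setR^d)$ (after a log--H\"older extension of $p$, which is legitimate and preserves $p^\pm$), the pigeonhole over $k\in\{2^j,\dots,2^{j+1}-1\}$ run on the weighted sums $f_k$ of annular \emph{modulars} rather than of measures, and the passage from the modular bound $\rho_{p(\cdot)}\big(\lambda_{n,j}\chi_{\set{\bfu^{n,j}\neq\bfu^n}}\big)\le c\,2^{-j}$ to the norm bound $c\,2^{-j/p^+}$ via the unit-ball property of Lemma~\ref{lem:unit_ball_px}; your computations of $\sum_k f_k$, of $\rho_{p(\cdot)}(\lambda_{n,j}\chi_{\mathcal{O}_{n,j}})\le f_{k(n,j)}$ and of the rescaling $\mu=(c\,2^{-j})^{1/p^+}$ are all sound, and this is exactly how the exponent $1/p^+$ arises in the cited sources. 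One point you gloss over: if the bad set is taken to be only $\set{M(\abs{\nabla\bfu^n}\chi_G)>\lambda_{n,j}}$, the Whitney-average truncation does \emph{not} yield the uniform-in-$j$ decay $\lim_{n\to\infty}\sup_{j}\norm{\bfu^{n,j}}_\infty=0$ (averages over small Whitney cubes need not be small, and the Lipschitz bound only gives $\norm{\bfu^{n,j}}_\infty\le c\,\lambda_{n,j}\,\mathrm{diam}(G)$); in \cite{dms} the bad set additionally contains a superlevel set $\set{M\bfu^n>\theta_n}$ with $\theta_n\to 0$ chosen through the strong convergence $\bfu^n\to\bfzero$ in $L^{p^-}(G)$ that you already invoked. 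This enlarges $\set{\bfu^{n,j}\neq\bfu^n}$, so your modular estimate for the fourth inequality must be complemented by the (routine) observation that $\lambda_{n,j}^{p^+}\,\abs{\set{M\bfu^n>\theta_n}}\to 0$ as $n\to\infty$ for fixed $j$, so this extra piece disappears in the $\limsup_n$; with that addendum your proposal is a complete proof, following the same route as the references the paper relies on.
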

\begin{Bew}
	See \cite[Thm.~4.4]{dms}, \cite[Cor.~9.5.2]{lpx-book}.
\end{Bew}
$\log$--H\"older continuity is also sufficient to prove the analogue of
Lemma \ref{hatgrad} in the variable exponent case.
\begin{Lem}\label{hatgradp(x)}
	Let $\Omega\subseteq \R^d$,
	${d\in \mathbb{N}}$,~be~open,~$p\in \mathcal{P}^{\log}(\Omega)$ and let
	Assumption~\ref{VssE} be satisfied.  Then,
	for any $\Omega'\subset\subset\Omega_0$, we have that
	${W^{1,p(\cdot)}(\Omega')=H^{1,p(\cdot)}(\Omega';\vert\bfE\vert^2)}$ with norm
	equivalence (depending on $\Omega'\!$ and $\bfE$) and
	$\hat{\nabla} u\!=\!\nabla u$ for all ${u\!\in\! W^{1,p(\cdot)}(\Omega')}$. 
\end{Lem}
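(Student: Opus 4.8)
The plan is to transcribe the proof of Lemma~\ref{hatgrad} into the variable exponent setting, replacing the elementary $L^p$-norm comparison by its Luxembourg-norm analogue and invoking the density of smooth functions in $W^{1,p(\cdot)}(\Omega')$, which is valid precisely because $p$ is $\log$--H\"older continuous.

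First I would exploit that $\Omega'\subset\subset\Omega_0$ and Assumption~\ref{VssE} give $\vert\bfE\vert\in C^0(\overline{\Omega'})$ with $\vert\bfE\vert>0$ on $\overline{\Omega'}$, hence there is a constant $c(\Omega')\ge 1$ with $c(\Omega')^{-1}\le\vert\bfE\vert^2\le c(\Omega')$ on $\overline{\Omega'}$. For any $u\in\mathcal{M}(\Omega')$ this yields the modular estimates $\rho_{p(\cdot),1}(u)\le c(\Omega')\,\rho_{p(\cdot),\vert\bfE\vert^2}(u)$ and $\rho_{p(\cdot),\vert\bfE\vert^2}(u)\le c(\Omega')\,\rho_{p(\cdot),1}(u)$, so that $L^{p(\cdot)}(\Omega')=L^{p(\cdot)}(\Omega';\vert\bfE\vert^2)$ as sets; combining these with Lemma~\ref{lem:unit_ball_px} (or, equivalently, with the identity $\|u\|_{p(\cdot),\vert\bfE\vert^2}=\|u\,\vert\bfE\vert^{2/p(\cdot)}\|_{p(\cdot)}$, the pointwise bounds $c(\Omega')^{-1/p^-}\le\vert\bfE\vert^{2/p(\cdot)}\le c(\Omega')^{1/p^-}$ on $\overline{\Omega'}$ and the lattice property of the Luxembourg norm) gives a two-sided norm estimate $\tilde c(\Omega',p)^{-1}\|u\|_{p(\cdot)}\le\|u\|_{p(\cdot),\vert\bfE\vert^2}\le\tilde c(\Omega',p)\,\|u\|_{p(\cdot)}$. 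Applying this to both $u$ and $\nabla u$ shows $\mathcal{V}_{p(\cdot),\vert\bfE\vert^2}=\mathcal{V}_{p(\cdot),1}$, with $\|\cdot\|_{1,p(\cdot),\vert\bfE\vert^2}$ and $\|\cdot\|_{1,p(\cdot)}$ equivalent on this common set of smooth functions.

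It remains to identify the two completions. By definition $H^{1,p(\cdot)}(\Omega';\vert\bfE\vert^2)$ is the completion of $\mathcal{V}_{p(\cdot),\vert\bfE\vert^2}$ with respect to $\|\cdot\|_{1,p(\cdot),\vert\bfE\vert^2}$. On the other hand, $p$ restricts to a globally $\log$--H\"older continuous exponent on $\Omega'$, so the Meyer--Serrin-type density theorem for variable exponent Sobolev spaces (cf.~\cite{lpx-book}) shows that $\mathcal{V}_{p(\cdot),1}=C^\infty(\Omega')\cap W^{1,p(\cdot)}(\Omega')$ is dense in $W^{1,p(\cdot)}(\Omega')$, i.e.~$W^{1,p(\cdot)}(\Omega')$ is the completion of $\mathcal{V}_{p(\cdot),1}$ with respect to $\|\cdot\|_{1,p(\cdot)}$. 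Since the two norms are equivalent on $\mathcal{V}_{p(\cdot),\vert\bfE\vert^2}=\mathcal{V}_{p(\cdot),1}$, the completions coincide as sets with equivalent norms, so $W^{1,p(\cdot)}(\Omega')=H^{1,p(\cdot)}(\Omega';\vert\bfE\vert^2)$. Finally, for $\varphi\in\mathcal{V}_{p(\cdot),1}$ one has trivially $\hat\nabla\varphi=\nabla\varphi$; approximating an arbitrary $u\in W^{1,p(\cdot)}(\Omega')$ by such $\varphi_n$ in both norms and using the uniqueness of $\hat\nabla$ guaranteed by Assumption~\ref{weightp(x)} passes this identity to the limit, giving $\hat\nabla u=\nabla u$ for all $u\in W^{1,p(\cdot)}(\Omega')$.

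The only non-elementary ingredient is the density of $C^\infty(\Omega')$ in $W^{1,p(\cdot)}(\Omega')$, which is exactly where the assumption $p\in\mathcal{P}^{\log}(\Omega)$ (rather than merely $p\in\mathcal{P}^{\infty}(\Omega)$) enters; the rest is a routine transcription of the constant-exponent argument, the only subtlety being that Luxembourg norms are comparable only up to constants depending on $p^-$ and $p^+$, so the resulting norm equivalence constants depend on $\Omega'$ and on $p$, as claimed.
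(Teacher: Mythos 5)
Your proposal is correct and follows essentially the same route as the paper: a two-sided pointwise bound $c(\Omega')^{-1}\le\vert\bfE\vert^2\le c(\Omega')$ on $\overline{\Omega'}$ yielding $\mathcal{V}_{p(\cdot),\vert\bfE\vert^2}=\mathcal{V}_{p(\cdot),1}$ with equivalent norms, followed by the identification of $W^{1,p(\cdot)}(\Omega')$ as the closure of $\mathcal{V}_{p(\cdot),1}$ via the variable-exponent Meyers--Serrin density theorem (the paper cites \cite[Thm.~9.1.8]{lpx-book}), which is exactly where $p\in\mathcal{P}^{\log}$ is used. Your additional remarks on the Luxembourg-norm comparison and on passing $\hat\nabla u=\nabla u$ to the limit via Assumption~\ref{weightp(x)} only make explicit what the paper leaves implicit.
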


\begin{proof}
  Due to $\vert \bE\vert >0 $ in $\overline{\Omega'}$ and
  ${\vert \bE\vert\in C^0(\overline{\Omega'})}$, there is a local
  constant $c(\Omega')\!>\!0$ such that
  $c(\Omega')^{-1}\leq \vert \bE\vert^2\leq
  c(\Omega')$~in~$\overline{\Omega'}$. Thus,
  ${L^{p(\cdot)}(\Omega')=L^{p(\cdot)}(\Omega';\vert
    \bE\vert^2)}$~with
  \begin{align*}
    \smash{c(\Omega')^{-\frac{1}{p^-}}\|u\|_{L^{p(\cdot)}(\Omega')}\leq
    \|u\|_{L^{p(\cdot)}(\Omega';\vert \bE\vert^2)}\leq
    c(\Omega')^{\frac{1}{p^-}}\|u\|_{L^{p(\cdot)}(\Omega')}} 
  \end{align*}
  for all
  $u\in L^{p(\cdot)}(\Omega')=L^{p(\cdot)}(\Omega';\vert
  \bE\vert^2)$. As a result, it holds~$\mathcal{V}_{p(\cdot),\vert
    \bE\vert^2}=\mathcal{V}_{p(\cdot),1}$~with
  \begin{align}
    \smash{c(\Omega')^{-\frac{1}{p^-}}\|u\|_{W^{1,p(\cdot)}(\Omega')}\leq
    \|u\|_{H^{1,p(\cdot)}(\Omega';\vert \bE\vert^2)}\leq
    c(\Omega')^{\frac{1}{p^-}}\|u\|_{W^{1,p(\cdot)}(\Omega')}}\label{equivalencep(x)} 
  \end{align}
  for all  $u\! \in\! \mathcal{V}_{p(\cdot),\vert
    \bE\vert^2}\!=\!\mathcal{V}_{p(\cdot),1}$. Since
  $W^{1,p(\cdot)}(\Omega')$, by \cite[\!Thm. \!9.1.8.]{lpx-book},~is~the~closure~of $\mathcal{V}_{p(\cdot),1}$ and
  $H^{1,p(\cdot)}(\Omega';\vert\bfE\vert^2)$, by definition,~is~the~closure~of
  $\mathcal{V}_{p(\cdot),\vert \bE\vert^2}$,~
  \eqref{equivalencep(x)}~implies that
  ${W^{1,p(\cdot)}(\Omega')=H^{1,p(\cdot)}(\Omega';\vert\bfE\vert^2)}$
  and $\smash{\hat{\nabla}} u=\nabla u$ for all
  ${u\in W^{1,p(\cdot)}(\Omega')}$.
\end{proof}

\subsection{A  weak stability lemma for variable exponents}\label{sec:weakp(x)}
Also the weak stability of problems of $p(\cdot)$--Laplace type~is~well~known~(cf.~\cite{dms}). It also holds for our problem \eqref{NS} if we make
appropriate natural assumptions on the extra stress tensor $\mathbf{S}$ and on the couple stress tensor
$\mathbf{N}$, which are motivated by the canonical example in~\eqref{eq:SN-ex}.

\begin{Vss}\label{VssSpx}
	For the extra stress tensor
	$\smash{\mathbf{S}:\R_{\sym}^{d\times d}\times \R_{\anti}^{d\times
         d}\times\R^d\to \setR^{d}}$  and some $\smash{\hat p \in \mathcal
       P^{\log}(\setR)}$ with $\smash{\hat p^- >1}$, there
       exist constants $c,C >0$ such that: 
	\begin{enumerate}
		\item[{\rm \hypertarget{(S.1)}{(S.1)}}] $\smash{\mathbf{S}\in C^0(\R_{\sym}^{d\times d}\times \R_{\anti}^{d\times d}\times \R^d;\setR^{d
			\times d})}$.
		\item[{\rm \hypertarget{(S.2)}{(S.2)}}] For every $\smash{\bD \in \R_{\sym}^{d\times d}}$,
		$\smash{\bR \in \R_{\anti}^{d\times d}}$ and $\bE \in \setR^d$, it holds\\[-5mm]
		\begin{align*}
				\vert\mathbf{S}^{\sym}(\mathbf{D},\mathbf{R},\mathbf{E})\vert&\leq
				c\,\big (1+\vert\E\vert^2 \big) \big (1+\vert\mathbf{D}\vert^{\hat p(\abs{\bE}^2)-1}\big)\,,
				\\[-1mm]
				\vert\mathbf{S}^{\anti}(\mathbf{D},\mathbf{R},\mathbf{E})\vert&\leq
				c\,\vert \mathbf{E}\vert^2 \big (1+\vert\mathbf{R}\vert^{\hat p(\abs{\bE}^2)-1}\big)\,.
		\end{align*}\\[-7mm]
		
		\item[{\rm \hypertarget{(S.3)}{(S.3)}}]  For every $\smash{\bD \in \R_{\sym}^{d\times d}}$,
		$\smash{\bR \in \R_{\anti}^{d\times d}}$ and $\bE \in \setR^d$, it holds\\[-5mm]
		\begin{align*}
				\mathbf{S}(\mathbf{D},\mathbf{R},\mathbf{E}):\mathbf{D}
				&\geq  c\,\big (1+\vert\E\vert^2\big )\, \big( \vert\mathbf{D}\vert^{\hat p(\abs{\bE}^2)}-C\big)\,,
				\\[-1mm]
				\mathbf{S}(\mathbf{D},\mathbf{R},\mathbf{E}):\mathbf{R}&\geq
				c\,\vert \mathbf{E}\vert^2 \big( \vert\mathbf{R}\vert^{\hat p(\abs{\bE}^2)}-C\big)\,.
		\end{align*}\\[-7mm]
		
		\item[{\rm \hypertarget{(S.4)}{(S.4)}}]  For every $\smash{\bD_1, \bD_2 \in
		\R_{\sym}^{d\times d}}$, $\smash{\bR_1, \bR_2 \in \R_{\anti}^{d\times d}}$
		and $\bE \in \setR^d$ with $(\mathbf{D}_1,\vert
		\mathbf{E}\vert \mathbf{R}_1)\neq(\mathbf{D}_2,\vert
		\mathbf{E}\vert \mathbf{R}_2)$, it holds\\[-6mm]
		\begin{align*}
				\big (\mathbf{S}(&\mathbf{D}_1,\mathbf{R}_1,\mathbf{E})-
				\mathbf{S}(\mathbf{D}_2,\mathbf{R}_2,\mathbf{E})\big ):
				\big (\mathbf{D}_1-\mathbf{D}_2+\mathbf{R}_1-\mathbf{R}_2\big )>0\,.
		\end{align*}
	\end{enumerate}
\end{Vss}

\begin{Vss}\label{VssNpx}
	For the couple stress tensor
	$\smash{\mathbf{N}:\R^{d\times d}\times \R^d\to\R^{d\times d}}$ and some $\smash{\hat p \in \mathcal
       P^{\log}(\setR)}$ with $\smash{ \hat p^- >1}$, there
       exist constants $c, C >0$ such that:  
	\begin{enumerate}
		\item[{\rm \hypertarget{(N.1)}{(N.1)}}]$\smash{\mathbf{N}\in C^0(\R^{d\times d}\times \R^d;\R^{d\times d})}$.
		
		\item[{\rm \hypertarget{(N.2)}{(N.2)}}]  For every $\smash{\bL \in \R^{d\times d}}$ and $\bE \in
		\setR^d$, it holds\\[-6mm]
		\begin{align*}
		\smash{	\vert\mathbf{N}(\mathbf{L},\mathbf{E})\vert\leq c\,\big
			\vert\mathbf{E}\vert^2
			\big(1+\vert\mathbf{L}\vert^{\hat p(\abs{\bE}^2)-1}\big )\,.}
		\end{align*}\\[-10mm]
		
		\item[{\rm \hypertarget{(N.3)}{(N.3)}}] For every $\bL \in \R^{d\times d}$ and $\bE \in
		\setR^d$, it holds\\[-6mm]
		\begin{align*}\label{Nkoerzivpx}
		\smash{	\mathbf{N}(\mathbf{L},\mathbf{E}):\mathbf{L}\geq
			c\,\big \vert\mathbf{E}\vert^2
			\big(\vert\mathbf{L}\vert^{\hat p(\abs{\bE}^2)} -C\big)\,.}
		\end{align*}\\[-10mm]

             \item[{\rm \hypertarget{(N.4)}{(N.4)}}]  For every $\bL_1,\bL_2 \in \R^{d\times d}$ and $\bE \in
		\setR^d$ with $\vert \mathbf{E}\vert>0$ and
		$\mathbf{L}_1\neq \mathbf{L}_2$, it holds\\[-6mm]
		\begin{align*}
			(\mathbf{N}(\mathbf{L}_1,\mathbf{E})-\mathbf{N}
			(\mathbf{L}_2,\mathbf{E})):(\mathbf{L}_1-\mathbf{L}_2)>0\,.
		\end{align*}
	\end{enumerate}
\end{Vss}

Concerning the material function $\hat p$ in Assumption \ref{VssSpx}
and Assumption \ref{VssNpx}, we assume the following: 

\begin{Vss}\label{VssEp(x)}
  Let Assumption~\ref{VssE} be satisfied and let
  $\hat p \in \mathcal P^{\log}(\setR) 
$. Then, the exponent $p:\Omega\to [1,\infty)$, defined
via
	\begin{align*}
		p(x):=\hat p(\vert \bfE(x)\vert^2)
	\end{align*} 
	for every $x\in \Omega$, satisfies
	$p\in \mathcal{P}^{\log}(\Omega)$.
\end{Vss}

\begin{Bem}
	Assumption \ref{VssEp(x)} can be verified under certain
	conditions on the boundary data $\bE_0$. 
        In fact, the
	regularity~theory~of~Maxwell's equations (cf.~\!\cite{gsch},
	\cite{rubo}) implies $\bE \hspace*{-0.15em}\in\hspace*{-0.15em} C^{0,\alpha}(\overline {\Omega})$,
	$\alpha\hspace*{-0.15em} \in\hspace*{-0.15em} (0,1)$,\!
        if~$\bE_0$~is~sufficiently~smooth. This yields 
	that $\smash{p = \hat p \circ \abs{\bE}^2}$ satisfies Assumption
	\ref{VssEp(x)}, as easy~calculations~show.
\end{Bem}

Under these assumptions, we have the following weak stability~for~\mbox{problem~\!\eqref{NS}}.
\begin{Lem}\label{DalMaso3p(x)}
  Let $\Omega\subseteq\R^d$, $d\ge2$, be a bounded domain and let
  \mbox{Assumption~\ref{VssS}}, Assumption~\ref{VssN}~and
  Assumption~\ref{VssEp(x)} be satisfied. Moreover, let
  $\smash{(\bv^n)_{n\in \mathbb{N}}\!\subseteq\! V_{p(\cdot)}(\Omega)}$ and
  $\smash{(\w^n)_{n\in \mathbb{N}}\subseteq H^{1,p(\cdot)}_0(\Omega;\vert
  \bfE\vert^2)}$ be~such~that
	\begin{align*}
			\bv^n&\rightharpoonup\bv\quad& &\text{in}\
			V_{p(\cdot)}(\Omega)&&\quad(n\to \infty)\,,
			\\[-1mm]
			\w^n&\rightharpoonup\w\quad& &\text{in}\ H^{1,p(\cdot)}_0(\Omega;\vert \bfE\vert^2)&&\quad(n\to \infty) \,.
	\end{align*}      
	For every ball $B\subset\subset \Omega_0$ such that
        $B':=2B\subset \subset \Omega_0$ and $\tau\in C_0^\infty(B')$
        satisfying ${\chi_B\leq \tau\le\chi_{B'}}$, we set
        ${
          \mathbf{u}^n:=(\mathbf{v}^n-\mathbf{v})\tau,\bfpsi^n:=(\boldsymbol\omega^n-\boldsymbol\omega)\tau
          \in W^{1,p(\cdot)}_0(B')}$, $n\in \mathbb{N}$.  Let
        $ \mathbf{u}^{n,j}\in W^{1,\infty}_0(B')$,
        $n,j\in \mathbb{N}$, and $\bfpsi^{n,j}\in W^{1,\infty}_0(B')$,
        $n,j\in \mathbb{N}$, resp., denote the Lipschitz truncations
        constructed according to Theorem~\ref{thm:Ltp(x)}.~Furthermore,
        assume that for every ${j\in \mathbb{N}}$, we have that
	\begin{align*} 
		\limsup_{n\to\infty}\big\vert\big\langle&\Sn-\Ss,
		\D\bu^{n,j}+\Rr(\bu^{n,j},\bfpsi^{n,j})\big\rangle 
		\\[-1mm]
		&\quad + \big\langle\Nn-\N,\nabla\bfpsi^{n,j}\big\rangle
		\big \vert \le
		\delta_j\,, 
	\end{align*}
	where $\delta_j\to 0$ $(j\to 0)$. \!Then, \!one has $
	\nabla\bv^n\to\nabla\bv$ a.e.~in $B$~${(n\to
          \infty)}$,~${\nabla\w^n\to\nabla\w}$ a.e.~in $B$ $(n\to
        \infty)$ and $\w^n\to\w$ a.e.~in $B$ $(n\to \infty)$~for suitable~subsequences.
\end{Lem}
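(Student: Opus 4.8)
The proof runs parallel to that of Lemma~\ref{DalMaso3}, with every tool from the constant-exponent theory replaced by its variable-exponent counterpart from Section~\ref{sec:log}; recall that $p=\hat p\circ\vert\bfE\vert^2\in\mathcal{P}^{\log}(\Omega)$ with $p^-\ge\hat p^->1$ under the standing assumptions. First I would fix a ball $B\subset\subset\Omega_0$ with $B':=2B\subset\subset\Omega_0$ and use Lemma~\ref{hatgradp(x)} to identify ${W^{1,p(\cdot)}(B')=H^{1,p(\cdot)}(B';\vert\bfE\vert^2)}$ with norm equivalence; in particular, on $B'$ the weighted gradient $\hat\nabla$ coincides with the distributional one, and the compact embedding $W^{1,p(\cdot)}(B')\hookrightarrow L^{p(\cdot)}(B')$ upgrades the assumed weak convergences to $\bv^n\to\bv$ and $\w^n\to\w$ strongly in $L^{p(\cdot)}(B')$ and a.e.~in $B'$, and localizes them to $\bu^n,\bfpsi^n\rightharpoonup\bfzero$ in $W^{1,p(\cdot)}_0(B')$. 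Writing $\widetilde\bS,\bS^n,\widetilde\bN,\bN^n$ for the stress tensors as in \eqref{eq:not}, I would next record uniform bounds $\|\bS^n\|_{p'(\cdot)}+\|(\bS^n)^{\anti}\|_{p'(\cdot),\vert\bfE\vert^{-2/(p(\cdot)-1)}}+\|\bN^n\|_{p'(\cdot),\vert\bfE\vert^{-2/(p(\cdot)-1)}}\le K$, and likewise for $\widetilde\bS,\widetilde\bN$. These follow from (\hyperlink{(S.2)}{S.2}) and (\hyperlink{(N.2)}{N.2}) by bounding the associated modulars: since $\bE\in L^\infty(\Omega)$ and $(p^-)'<\infty$, the factors $(1+\vert\bfE\vert^2)^{p'(\cdot)}$ are bounded, and since $(p(\cdot)-1)p'(\cdot)=p(\cdot)$ the growth with exponent $\hat p(\vert\bfE\vert^2)-1$ produces integrals of $1+\vert\D\bv^n\vert^{p(\cdot)}$, of $1+\vert\bR(\bv^n,\w^n)\vert^{p(\cdot)}$ and of $1+\vert\nabla\w^n\vert^{p(\cdot)}$ (the latter two against the weight $\vert\bfE\vert^2$), all of which are controlled by the norm--modular unit ball property (Lemma~\ref{lem:unit_ball_px}) and the uniform bounds for $\bv^n$ in $V_{p(\cdot)}(\Omega)$ and $\w^n$ in $H^{1,p(\cdot)}_0(\Omega;\vert\bfE\vert^2)$ implied by the assumed weak convergence.

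With these preparations I would run the estimate of Lemma~\ref{DalMaso3} unchanged in structure. Fixing $\theta\in(0,1)$, I bound the quantity $I^n$ from the proof of Lemma~\ref{DalMaso3} from above, via \eqref{eq:1} and $\tau\le\chi_{B'}$, by $\int_{B'}\alpha_n^\theta\,dx+\int_{B'}\beta_n^\theta\,dx$, split each integrand over $\{\bu^n\neq\bu^{n,j}\}$ and $\{\bu^n=\bu^{n,j}\}$ (resp.~for $\bfpsi^n$), and apply H\"older's inequality with exponents $\tfrac1\theta$ and $\tfrac1{1-\theta}$, where the $L^1(B')$-norms of $\alpha_n,\beta_n$ stay bounded thanks to the variable-exponent H\"older inequality, \eqref{eq:symm} and the uniform bounds above. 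On $\{\bu^n=\bu^{n,j}\}$, the product-rule identities $\nabla((\bv^n-\bv)\tau)=\nabla\bu^{n,j}-(\bv^n-\bv)\otimes\nabla\tau$ and the analogue for $\bfpsi^{n,j}$, together with \eqref{eq:symm}, reduce the contribution precisely to the terms $I_5^{n,j},\dots,I_{11}^{n,j}$ of \eqref{eq:i56}: $I_5^{n,j}$ is controlled by the assumed bound $\le\delta_j$, while $I_6^{n,j},\dots,I_{11}^{n,j}$ are estimated by the variable-exponent H\"older inequality $\vert\langle u,v\rangle\vert\le2\,\|u\|_{p'(\cdot),\sigma'}\|v\|_{p(\cdot),\sigma}$, the uniform stress bounds, the truncation property $\sup_j\|\bu^{n,j}\|_\infty+\sup_j\|\bfpsi^{n,j}\|_\infty\to0$, and $\|\bv^n-\bv\|_{L^{p(\cdot)}(B')}+\|\w^n-\w\|_{L^{p(\cdot)}(B')}\to0$; weighted norms $\|\cdot\|_{p(\cdot),\vert\bfE\vert^2}$ are reduced to unweighted ones using $\vert\bfE\vert^{2/p(\cdot)}\le c(\|\bE\|_\infty,p^-)$.

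The one genuinely new point, and the step I expect to require the most care, is the extraction of $j$-smallness from the Lipschitz truncation, because Theorem~\ref{thm:Ltp(x)} only supplies $\|\nabla\bu^{n,j}\chi_{\{\bu^{n,j}\neq\bu^n\}}\|_{p(\cdot)}\le c\,\|\lambda_{n,j}\chi_{\{\bu^{n,j}\neq\bu^n\}}\|_{p(\cdot)}$ with $\limsup_{n\to\infty}\|\lambda_{n,j}\chi_{\{\bu^{n,j}\neq\bu^n\}}\|_{p(\cdot)}\le c\,2^{-j/p^+}$, rather than the measure-based bound available in the constant-exponent case (cf.~\eqref{eq:C18}). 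From $\lambda_{n,j}\ge2^{2^j}\ge1$ one gets $\|\chi_{\{\bu^{n,j}\neq\bu^n\}}\|_{p(\cdot)}\le\|\lambda_{n,j}\chi_{\{\bu^{n,j}\neq\bu^n\}}\|_{p(\cdot)}$, which is $\le1$ for $j$ large, so Lemma~\ref{lem:unit_ball_px}\,(ii) yields $\vert\{\bu^{n,j}\neq\bu^n\}\vert=\rho_{p(\cdot)}(\chi_{\{\bu^{n,j}\neq\bu^n\}})\le\|\chi_{\{\bu^{n,j}\neq\bu^n\}}\|_{p(\cdot)}$, whence $\limsup_{n\to\infty}\vert\{\bu^{n,j}\neq\bu^n\}\vert\le c\,2^{-j/p^+}$, and likewise for $\bfpsi^n$. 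Feeding all of this into the splitting gives, for every $j\in\setN$,
\begin{align*}
  \limsup_{n\to\infty}I^n\le c\,\delta_j^\theta+c\,\big(1+\|\bE\|_\infty^{2/p^-}\big)^\theta K^\theta\,2^{-j\theta/p^+}+c\,K^{2\theta}\,2^{-j(1-\theta)/p^+}\,.
\end{align*}
Since $\delta_j\to0$ and $I^n$ does not depend on $j$, letting $j\to\infty$ forces $I^n\to0$, so the bracket defining $I^n$ converges to $0$ in $L^1(B)$; as both of its summands are nonnegative a.e.~on $B\subset\Omega_0$ by (\hyperlink{(S.4)}{S.4}) and (\hyperlink{(N.4)}{N.4}), a subsequence converges to $0$ a.e.~in $B$. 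Combined with $\w^n\to\w$ a.e.~in $B$, the pointwise monotonicity argument of \cite[Lem.~6]{DalMaso2} then gives $\nabla\bv^n\to\nabla\bv$, $\nabla\w^n\to\nabla\w$ and $\w^n\to\w$ a.e.~in $B$ along a suitable subsequence, which is the assertion.
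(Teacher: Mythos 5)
Your proposal is correct and follows essentially the same route as the paper, which simply reruns the proof of Lemma~\ref{DalMaso3} word by word with the variable-exponent tools (Lemma~\ref{hatgradp(x)}, Theorem~\ref{thm:Ltp(x)}, the variable-exponent H\"older inequality, and the replacements $2^{-j/p}\mapsto 2^{-j/p^+}$, $\|\bE\|_\infty^{2/p}\mapsto\|\bE\|_\infty^{2/p^-}$). Your extra steps — the modular-based stress bounds via Lemma~\ref{lem:unit_ball_px} and the conversion of $\limsup_n\|\lambda_{n,j}\chi_{\{\bu^{n,j}\neq\bu^n\}}\|_{p(\cdot)}\le c\,2^{-j/p^+}$ into a measure bound — are exactly the details the paper leaves implicit, and they are carried out correctly.
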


\begin{Bew}
  We follow, word by word, the procedure as in the proof of Lemma~\ref{DalMaso3}. In doing so, we employ Lemma \ref{hatgradp(x)} instead
  of Lemma \ref{hatgrad}, which results in
  $H^{1,p(\cdot)}(B';\abs{\bE}^2) = W^{1,p(\cdot)}(B')$. The trivial
  embedding $W^{1,p(\cdot)}(B')\vnor W^{1,p^-}(B')$ together with the
  classical Rellich's compactness theorem yields that we have to
  replace $q\in \left[1,p^*\right)$ by $q\in
  \left[1,(p^-)^*\right)$. Moreover, we have to replace the constant
  exponent $p\in \left(1,\infty\right)$ by the variable exponent
  $p\in \mathcal{P}^{\log}(\Omega)$, wherever it occurs. This applies,
  in particular, to all Lebesgue, weighted Lebesgue, Sobolev and
  weighted Sobolev norms containing $p$ or $p'$.
  Whenever we use H\"older's~inequality, \linebreak we get an additional multiplicative factor 2. Finally, we
  replace~$\smash{\|\bfE\|_{\infty}^{\smash{2/p}}\!}$~by~$\smash{\|\bfE\|_{\infty}^{\smash{2/p^-}}\!}$, $\abs{\Omega}^{\smash{1/p}}$ by
  $\max \{ \abs{\Omega}^{ \smash{1/p^+}}, \abs{\Omega}^{ \smash{1/p^-}}\big \}$ (cf.~\cite[Lem.~3.2.12]{lpx-book}) and
  $2^{-j/p}$ by $2^{-j/p^+}$.
\end{Bew}
\begin{Kor}\label{cor:ae-convpx}
  Let the assumptions of Lemma \ref{DalMaso3p(x)} be satisfied for all
  balls ${B\hspace*{-0.1em}\subset \subset\hspace*{-0.1em} \Omega_0}$ with
  ${B'\hspace*{-0.1em}:=\hspace*{-0.1em}2B\hspace*{-0.1em} \subset \subset\hspace*{-0.1em} \Omega_0}$.
  Then, one has that
  $\nabla \bv^n \hspace*{-0.1em}\to\hspace*{-0.1em} \nabla \bv$~a.e.~in~$\Omega$~${(n\hspace*{-0.1em}\to\hspace*{-0.1em} \infty)}$,
  $ \hat\nabla \w^n \to \hat\nabla \w$ a.e.~in $\Omega$
  $(n\to \infty)$ and $\w^n \to \,\w $~a.e.~in~$\Omega$~${(n\to \infty)}$~for~suitable~subsequences.
\end{Kor}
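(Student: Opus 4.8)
The plan is to repeat, essentially word for word, the proof of Corollary~\ref{cor:ae-conv}, only replacing Lemma~\ref{DalMaso3} and Lemma~\ref{hatgrad} by their variable exponent counterparts Lemma~\ref{DalMaso3p(x)} and Lemma~\ref{hatgradp(x)}. First I would fix a countable exhausting family of balls: using all balls whose center and radius are rational and whose double is compactly contained in $\Omega_0$, one obtains a countable family $(B_k)_{k\in\setN}$ with $B_k':=2B_k\subset\subset\Omega_0$ for every $k\in\setN$ and $\bigcup_{k\in\setN}B_k=\Omega_0$.

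By assumption, the hypotheses of Lemma~\ref{DalMaso3p(x)} are satisfied for each of these balls. Hence, applying the lemma with $B=B_k$, we obtain for every $k\in\setN$ a subsequence (a priori depending on $k$) along which $\nabla\bv^n\to\nabla\bv$, $\nabla\w^n\to\nabla\w$ and $\w^n\to\w$ a.e.~in $B_k$ $(n\to\infty)$. Performing the usual diagonalization over $k\in\setN$ yields a single, not relabeled, subsequence for which these three convergences hold a.e.~in $B_k$ for every $k\in\setN$ simultaneously, and therefore a.e.~in $\Omega_0=\bigcup_{k\in\setN}B_k$.

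It remains to upgrade $\nabla\w^n\to\nabla\w$ on the balls to $\hat\nabla\w^n\to\hat\nabla\w$ a.e.~in $\Omega_0$. Here Lemma~\ref{hatgradp(x)} gives $W^{1,p(\cdot)}(B_k')=H^{1,p(\cdot)}(B_k';\abs{\bE}^2)$ with $\hat\nabla u=\nabla u$; in particular, for every $u\in H^{1,p(\cdot)}_0(\Omega;\abs{\bE}^2)$ one has $(\hat\nabla u)|_{B_k'}=\nabla(u|_{B_k'})$ on each $B_k'$ --- this is precisely the sense in which the gradients occurring in Lemma~\ref{DalMaso3p(x)} are to be understood, in analogy with Remark~\ref{rem:hatgrad} and \eqref{mon4}. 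Applying this to $\w$ and to each $\w^n$, the a.e.~convergence $\nabla\w^n\to\nabla\w$ on $B_k$ coincides with $\hat\nabla\w^n\to\hat\nabla\w$ on $B_k$, so the diagonal subsequence constructed above also satisfies $\hat\nabla\w^n\to\hat\nabla\w$ a.e.~in $\Omega_0$. Finally, Assumption~\ref{VssE} guarantees $\abs{\Omega\setminus\Omega_0}=0$, so all three convergences extend from $\Omega_0$ to $\Omega$, which proves the assertion. There is no real obstacle here: the argument is a routine covering-and-diagonalization, and the only point needing attention is the identification of $\hat\nabla$ with the distributional gradient on the balls, which is supplied by Lemma~\ref{hatgradp(x)}.
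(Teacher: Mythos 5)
Your argument is correct and is essentially identical to the paper's proof: the paper proves Corollary~\ref{cor:ae-convpx} by repeating the proof of Corollary~\ref{cor:ae-conv}, i.e., covering $\Omega_0$ by countably many balls with rational data whose doubles lie compactly in $\Omega_0$, applying the weak stability lemma on each ball, diagonalizing, identifying $\hat\nabla\w$ with the distributional gradient on each $B_k'$ via Lemma~\ref{hatgradp(x)} (the analogue of Remark~\ref{rem:hatgrad}), and using $\vert\Omega\setminus\Omega_0\vert=0$. No gaps to report.
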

\begin{proof}
  The proof coincides with that of Corollary \ref{cor:ae-conv}. 
\end{proof}

\subsection{Existence theorem for variable exponents}\label{sec:mainp(x)}

\hspace*{-0.1cm}Now we have all tools at our disposal to formulate and prove~our~\mbox{existence~\!result} in the case of variable exponents.
\begin{Sa}\label{thm:main4p(x)}
  Let $\Omega\subseteq\R^d$, $d\ge2$, be a bounded domain, let
  Assumption~\ref{VssSpx}, Assumption~\ref{VssNpx} ~and Assumption
  ~\ref{VssEp(x)} be satisfied, and~let~${p^->\frac{2d}{d+2}}$.~Then, for~every
  $\smash{\ff\in (W^{1,p(\cdot)}_0(\Omega))^*}$ and
  $\smash{\bell\in (H^{1,p(\cdot)}_0(\Omega;\vert \bfE\vert^2))^*}$, there
  exist functions ${\mathbf{v}\hspace*{-0.1em}\in\hspace*{-0.1em} V_{p(\cdot)}(\Omega)}$ and
  $ {\w\hspace*{-0.1em}\in\hspace*{-0.1em} H^{1,p(\cdot)}_0(\Omega;\vert \bfE\vert^2)}$ such that for
  every $\boldsymbol\varphi \hspace*{-0.1em}\in\hspace*{-0.1em}
  C^1_0(\Omega)$ with ${\divo \bphi \hspace*{-0.1em}= \hspace*{-0.1em}0}$ and 
  $\boldsymbol\psi \in C_0^1(\Omega)$ with
  ${\nabla \boldsymbol\psi\in L^{\smash{\frac{q}{q-2}}}(\Omega;\vert
    \bfE\vert^{\smash{-\frac{\alpha q}{q-2}}})}$ for some
  ${q\in \left[1,(p^-)^*\right)}$, it holds
  \begin{align*}
      &\big\langle
      \Ss-\bv\otimes\bv,\bD\boldsymbol\varphi+\bR(\bphi,\boldsymbol\psi)\big\rangle
      \\
      & \;+\big\langle \bN(\hat \nabla \w
      ,\bE)-\w\otimes\bv,\nabla\boldsymbol\psi\big\rangle =\big\langle
      \ff,\boldsymbol\varphi\big\rangle+\big\langle\bell,
      \boldsymbol\psi\big\rangle \,.
  \end{align*}
  Moreover, we have the following a-priori estimate 
  \begin{equation*}
    \smash{\Vert\bv\Vert_{1,p(\cdot)} +\Vert\w\Vert_{1,p(\cdot),\vert\E\vert^2} \leq
      c\, \big (\norm{\bE}_2, \Vert
      \ff\Vert_{(\Wpo)^*}, \Vert\bell\Vert_{(\WpEo)^*}\big
      )\,.}
  \end{equation*}
\end{Sa}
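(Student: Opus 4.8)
The plan is to run, step by step, the four-part argument of the proof of Theorem~\ref{thm:main4}, replacing each constant-exponent ingredient by its log-H\"older counterpart from Section~\ref{sec:log}; note that Assumption~\ref{VssEp(x)} guarantees $p=\hat p\circ\abs{\bfE}^2\in\mathcal P^{\log}(\Omega)$, so all tools of Subsection~\ref{sec:log} apply. First I would build, for each $n\in\setN$, a non-degenerate approximation $(\bv^n,\w^n)\in(V_{p(\cdot)}(\Omega)\cap L^r(\Omega))\times(W^{1,p(\cdot)}_0(\Omega)\cap L^r(\Omega))$ for a fixed $r>2(p^-)'$, solving the regularization of \eqref{NS2} with $p(\cdot)$ in place of $p$; solvability follows from pseudomonotone operator theory as before. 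Testing with $(\bv^n,\w^n)$ and using (S.3), (N.3) of Assumption~\ref{VssSpx} and Assumption~\ref{VssNpx} gives a uniform bound on the modular $\int_\Omega(1+\abs{\bfE}^2)\abs{\bD\bv^n}^{p(x)}+\abs{\nabla\w^n}^{p(x)}\abs{\bfE}^2+\abs{\bR(\bv^n,\w^n)}^{p(x)}\abs{\bfE}^2+\tfrac1n(\abs{\bv^n}^r+\abs{\nabla\w^n}^{p(x)}+\abs{\w^n}^r)\,dx$; the norm--modular unit ball property (Lemma~\ref{lem:unit_ball_px}) together with Korn's and Poincar\'e's inequalities in variable exponent spaces (Theorem~\ref{kornp(x)}, Theorem~\ref{poincarep(x)}) and the definition of $\bR$ then yield $\|\bv^n\|_{1,p(\cdot)}+\|\w^n\|_{1,p(\cdot),\abs{\bfE}^2}\le K$, while (S.2), (N.2), $\bfE\in L^\infty(\Omega)$ and $1<p^-\le p^+<\infty$ give the corresponding bound on $\bS^n$, $(\bS^n)^\anti$, $\bN^n$ in the relevant reflexive (weighted) variable dual Lebesgue spaces. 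Reflexivity and Rellich's theorem, the latter applied through the trivial embedding into $W^{1,p^-}$ (which is why $p^*$ must everywhere be replaced by $(p^-)^*$), then furnish not-relabeled subsequences with $\bv^n\weakto\bv$ in $V_{p(\cdot)}(\Omega)$, $\bv^n\to\bv$ in $L^q(\Omega)$ for $q\in[1,(p^-)^*)$, $\w^n\weakto\w$ in $H^{1,p(\cdot)}_0(\Omega;\abs{\bfE}^2)$, and $\bS^n\weakto\widehat\bS$, $(\bS^n)^\anti\weakto\widehat\bS^\anti$, $\bN^n\weakto\widehat\bN$.

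For the identification step I would fix a ball $B\subset\subset\Omega_0$ with $B':=2B\subset\subset\Omega_0$, invoke Lemma~\ref{hatgradp(x)} to get $W^{1,p(\cdot)}(B')=H^{1,p(\cdot)}(B';\abs{\bfE}^2)$ with norm equivalence (hence $\w^n\weakto\w$ in $W^{1,p(\cdot)}(B')$, $\w^n\to\w$ in $L^q(B')$ and a.e.), pick $\tau\in C_0^\infty(B')$ with $\chi_B\le\tau\le\chi_{B'}$, and set $\bu^n:=(\bv^n-\bv)\tau$, $\bfpsi^n:=(\w^n-\w)\tau\weakto\bfzero$ in $W^{1,p(\cdot)}_0(B')$. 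Passing to the Lipschitz truncations $\bu^{n,j},\bfpsi^{n,j}$ of Theorem~\ref{thm:Ltp(x)} and correcting $\bu^{n,j}$ to be solenoidal via the variable-exponent Bogovskii operator (Theorem~\ref{bogp(x)}) --- $\bw^{n,j}:=\mathcal B_{B'}(\divo\bu^{n,j})$, $\bphi^{n,j}:=\bu^{n,j}-\bw^{n,j}$ --- the decomposition of $\divo\bu^{n,j}$ on $\{\bu^n=\bu^{n,j}\}$ versus its complement, the boundedness of $\mathcal B_{B'}$, and Theorem~\ref{thm:Ltp(x)} give $\limsup_n\|\bw^{n,j}\|_{W^{1,p(\cdot)}_0(B')}\le c\,2^{-j/p^+}$. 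Testing the regularized equation with $(\bphi^{n,j},\bfpsi^{n,j})$, subtracting $\langle\Ss,\bD\bu^{n,j}+\bR(\bu^{n,j},\bfpsi^{n,j})\rangle+\langle\bN(\hat\nabla\w,\bfE),\nabla\bfpsi^{n,j}\rangle$, and estimating the resulting ten terms exactly as in Lemma~\ref{DalMaso3} --- the $\tfrac1n$-terms and the convective terms vanishing by the a-priori bounds and the strong $L^{s'}$-convergence of $\bv^n\otimes\bv^n$ and $\w^n\otimes\bv^n$, the term $\langle\bS^n,\nabla\bw^{n,j}\rangle$ bounded by $c\,K\,2^{-j/p^+}$, and the rest handled by H\"older's inequality in variable Lebesgue spaces (introducing only harmless factors $2$, $\|\bfE\|_\infty^{2/p^-}$, $\max\{\abs{\Omega}^{1/p^+},\abs{\Omega}^{1/p^-}\}$) --- verifies the monotonicity hypothesis of Lemma~\ref{DalMaso3p(x)}. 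Corollary~\ref{cor:ae-convpx} then yields $\nabla\bv^n\to\nabla\bv$, $\hat\nabla\w^n\to\hat\nabla\w$, $\w^n\to\w$ a.e.~in $\Omega$; (S.1), (N.1) upgrade this to $\bS^n\to\Ss$, $\bN^n\to\bN(\hat\nabla\w,\bfE)$ a.e.; and, arguing as in \cite[Thm.~4.6]{erw} for $\widehat\bS$ and using Theorem~\ref{pfastue} with $\sigma=\abs{\bfE}^2$ together with the absolute continuity of Lebesgue measure with respect to $\nu_{\abs{\bfE}^2}$ for $\widehat\bN$, one obtains $\widehat\bS=\Ss$ and $\widehat\bN=\bN(\hat\nabla\w,\bfE)$.

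Finally I would pass to the limit $n\to\infty$ in the regularized equation: the $\tfrac1n$-terms drop out by the a-priori bound, $\bv^n\otimes\bv^n\to\bv\otimes\bv$ in $L^{s'}(\Omega)$ for $s'\in[1,(p^-)^*/2)$ --- a nonempty range precisely because $p^->\tfrac{2d}{d+2}$ forces $(p^-)^*>2$ --- and, with the weak convergence of the stresses and the identifications above, every term converges except $\langle\w^n\otimes\bv^n,\nabla\boldsymbol\psi\rangle$. For that term I fix $\boldsymbol\psi\in C_0^1(\Omega)$ with $\nabla\boldsymbol\psi\in L^{q/(q-2)}(\Omega;\abs{\bfE}^{-\alpha q/(q-2)})$, $q\in[1,(p^-)^*)$, choose a Lipschitz domain $\Omega'$ with $\textup{int}(\textup{supp}\,\boldsymbol\psi)\subset\subset\Omega'\subset\subset\Omega$, and combine the embedding $H^{1,p(\cdot)}(\Omega;\abs{\bfE}^2)\vnor H^{1,p^-}(\Omega;\abs{\bfE}^2)$ (valid since $\nu_{\abs{\bfE}^2}(\Omega)<\infty$) with Theorem~\ref{compactnew} applied with exponent $p^-$ to obtain $\w^n\weakto\w$ in $L^q(\Omega';\abs{\bfE}^{\alpha q})$ for $\alpha\ge1+\tfrac2{p^-}$, while H\"older's inequality gives $\nabla\boldsymbol\psi\,\bv^n\to\nabla\boldsymbol\psi\,\bv$ in $L^{q'}(\Omega';\abs{\bfE}^{-\alpha q/(q-1)})$; duality of these two spaces yields $\langle\w^n\otimes\bv^n,\nabla\boldsymbol\psi\rangle\to\langle\w\otimes\bv,\nabla\boldsymbol\psi\rangle$, and the a-priori estimate for $(\bv,\w)$ follows from the bound of Step~1 by weak lower semicontinuity of the norms. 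The step I expect to be the main obstacle is this identification step --- carrying the modular-to-norm conversions of Lemma~\ref{lem:unit_ball_px} through the whole argument in the presence of the weight $\abs{\bfE}^2$, which satisfies no Muckenhoupt condition, while verifying the monotonicity hypothesis of Lemma~\ref{DalMaso3p(x)} --- although this difficulty is already isolated in Lemma~\ref{DalMaso3p(x)}, Lemma~\ref{hatgradp(x)} and the log-H\"older machinery, so what remains is essentially careful bookkeeping.
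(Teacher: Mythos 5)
Your proposal is correct and follows essentially the same route as the paper: the authors prove Theorem~\ref{thm:main4p(x)} by repeating the four-step argument of Theorem~\ref{thm:main4} verbatim, replacing $p$ by $p\in\mathcal{P}^{\log}(\Omega)$, $r>2p'$ by $r>2(p^-)'$, $p^*$ by $(p^-)^*$, and substituting Lemma~\ref{hatgradp(x)}, Theorem~\ref{bogp(x)}, Theorem~\ref{thm:Ltp(x)}, Theorem~\ref{kornp(x)}, Theorem~\ref{poincarep(x)} and Corollary~\ref{cor:ae-convpx} for their constant-exponent counterparts, together with the modular-to-norm conversion --- exactly the substitutions you carry out, including the treatment of the last convective term via Theorem~\ref{compactnew} applied through the embedding into the $p^-$-space.
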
\newpage

\begin{Bew}
  We follow, word by word, the procedure  as in the proof of Theorem~\ref{thm:main4}. In doing so, we again have to replace the constant
  exponent $p\in \left(1,\infty\right)$ by the variable exponent
  $p\in \mathcal{P}^{\log}(\Omega)$, classical Lebesgue, weighted
  Lebesgue, Sobolev and weighted Sobolev norms containing $p$ or $p'$
  by~their~variable exponent counterparts. Moreover, we replace
  $r>2p'$ by $r>2(p^-)'$ in the definition of the approximate problem.
  To show that \eqref{apri0} implies \eqref{apri} in the variable~\mbox{exponent}~case,~the constant exponent Korn's and \Poincare's
  inequalities is replaced by their variable exponent counterparts in
  Theorem \ref{kornp(x)} and Theorem~\ref{poincarep(x)}, and
  \cite[Lem.~3.2.5.]{lpx-book} is used to pass from the modular
  estimate to the norm estimate.  Concerning the usage of Rellich's
  compactness theorem, we proceed as in the proof of
  Lemma~\ref{DalMaso3p(x)} and, thus, replace $q\in \left[1,p^*\right)$
  by $q\in \left[1,(p^-)^*\right)$. Moreover, we replace  Lemma
  \ref{hatgrad} by Lemma \ref{hatgradp(x)}, Theorem
  \ref{bog} by Theorem \ref{bogp(x)}, Theorem~\ref{thm:Lt} by Theorem
  \ref{thm:Ltp(x)} and Corollary \ref{cor:ae-conv} by Corollary
  \ref{cor:ae-convpx}. 
\end{Bew}

\section*{References}

{\linespread{0.7}\selectfont
\def\cprime{$'$} \def\cprime{$'$} \def\cprime{$'$}
\ifx\undefined\bysame
\newcommand{\bysame}{\leavevmode\hbox to3em{\hrulefill}\,}
\fi

\end{document}